\numberwithin{equation}{section}%NUMBERING EQUATIONS
\setlist[enumerate]{itemsep = -0.2em}
\titleformat{\subsection}[runin]
  {\normalfont\normalsize\bfseries}{\thesubsection}{0.3em}{#1.}
\definecolor{MyDarkBlue}{rgb}{0,0.08,0.50}  %to have colored hyperlinks
\definecolor{BrickRed}{rgb}{0.65,0.08,0}
\newtheorem{Lemma}{Lemma}[section]
\newtheorem{Proposition}[Lemma]{Proposition}
\newtheorem{Theorem}[Lemma]{Theorem}
\newtheorem{Remark}[Lemma]{Remark}
\newtheorem{Corollary}[Lemma]{Corollary}
\newtheorem{Definition}[Lemma]{Definition}
\newcommand{\sub}[1]{\boldsymbol{#1}}
\newcommand{\R}{\mathbb{R}}
\newcommand{\N}{\mathbb{N}}
\newcommand{\I}{\mathbbm{1}}
\newcommand{\pr}{\mathbb{P}}
\newcommand{\E}{\mathbb{E}}
\newcommand{\emp}{\varnothing}
\newcommand{\eqn}[1]{\begin{equation} #1 \end{equation}}
\newcommand{\e}{\mathrm{e}}
\newcommand{\La}{\mathcal{L}}
\newcommand{\CTBP}{\mathrm{CTBP}}
\newcommand{\CTBPs}{\mathrm{CTBPs}}
\newcommand{\de}{\partial}
\begin{document}

\title{\bfseries\uppercase{\large The dynamics of power laws:\\
 Fitness and aging in preferential attachment trees} }

%%% REMEMBER TO CHANGE TITLE AND AUTHORS ALSO IN THE HEADER PACKAGE

\author[a,1]{Alessandro Garavaglia}
\author[a,2]{Remco van der Hofstad}
\author[b,3]{Gerhard Woeginger}
\affil[a]{\footnotesize Department of Mathematics and
    Computer Science, Eindhoven University of Technology, 5600 MB Eindhoven, The Netherlands}
\affil[b]{\footnotesize Department of Computer Science, RWTH Aachen,
Ahornstrasse 55, D-52074, Aachen, Germany}

\vspace{0.2cm}
\affil[$ $]{{\itshape email address}: $^1$a.garavaglia@tue.nl, $^2$rhofstad@win.tue.nl, $^3$woeginger@cs.rwth-aachen.de}

\date{}
\maketitle

\vspace{-1cm}
\begin{abstract}
Continuous-time branching processes describe the evolution of a population whose individuals generate a random number of children according to a birth process. Such branching processes can be used to understand preferential attachment models in which the birth rates are linear functions.

We are motivated by citation networks, where power-law citation counts are observed as well as aging in the citation patterns. To model this, we introduce fitness and age-dependence in these birth processes. The multiplicative fitness moderates the rate at which children are born, while the aging is integrable, so that individuals receives a {\em finite} number of children in their lifetime. We show the existence of a limiting degree distribution for such processes. In the preferential attachment case, where fitness and aging are absent, this limiting degree distribution is known to have power-law tails. We show that the limiting degree distribution has exponential tails for bounded fitnesses in the presence of integrable aging, while the power-law tail is restored when integrable aging is combined with fitness with unbounded support with at most exponential tails. In the absence of integrable aging, such processes are explosive. 
\end{abstract}

\maketitle

\thispagestyle{plain}

\section{Introduction}
\label{sec01-intr}

Preferential attachment models (PAMs) aim to describe dynamical networks. As for many real-world networks, PAMs present power-law degree distributions that arise directly from the dynamics, and are not artificially imposed as, for instance, in configuration models or inhomogeneous random graphs.

PAMs were first proposed by Albert and Barab{\'a}si \cite{ABrB}, who defined a random graph model where, at every discrete time step, a new vertex is added with one or more edges, that are attached to existing vertices with probability proportional to the degrees, i.e.,
	$$
	\pr\left(\mbox{vertex}~(n+1)~\mbox{is attached to vertex}~i\mid \mbox{graph at time}~n\right)\propto D_i(n),
	$$
where $D_i(n)$ denotes the degree of a vertex $i\in\{1,\ldots,n\}=[n]$ at time $n$. In general, the dependence of the attachment probabilities on the degree can be through a { \itshape  preferential attachment function} of the degree, also called {\itshape preferential attachment weights}. Such models are called PAMs with {\itshape general weight function}. According to the asymptotics of the weight function $w(\cdot)$, the limiting degree distribution of the graph can behave rather differently. There is an enormous body of literature showing that PAMs present power-law decay in the limiting degree distribution precisely when the weight function is affine, i.e., it is a constant plus a linear function. See e.g., \cite[Chapter 8]{vdH1} and the references therein. In addition, these models show the so-called {\em old-get-richer} effect, meaning that the vertices of highest degrees are the vertices present early in the network formation.  An extension of this model is called preferential attachment models with a {\itshape random number of edges} \cite{Dei}, where new vertices are added to the graph with a different number of edges according to a fixed  distribution, and again power-law degree sequences arise. A generalization that also gives younger vertices the chance to have high degrees is given by PAMs with {\em fitness} as studied in \cite{der2014},\cite{der16}. Borgs et al.\ \cite{Borgs} present a complete description of the limiting degree distribution of such models, with different regimes according to the distribution of the fitness, using {\itshape generalized Poly\'a's urns}. An interesting variant of a multi-type PAM is investigated in \cite{Rosen}, where the author consider PAMs where fitnesses are not i.i.d.\ across the vertices, but they are sampled according to distributions depending on the fitnesses of the ancestors. 

This work is motivated by {\em citation networks}, where vertices denote papers and the directed edges correspond to citations. For such networks, other models using preferential attachment schemes and adaptations of them have been proposed mainly in the physics literature. Aging effects, i.e., considering the {\itshape age of a vertex} in its likelihood to obtain children, have been extensively considered as the starting point to investigate their dynamics  \cite{WaMiYu}, \cite{WangYu}, \cite{Hajra}, \cite{Hajra2}, \cite{Csardi}. Here the idea is that old papers are less likely to be cited than new papers. Such aging has been observed in many citation network datasets and makes PAMs with weight functions depending only on the degree ill-suited for them. As mentioned above, such models could more aptly be called {\itshape old-get-richer} models, i.e., in general {\em old} vertices have the highest degrees. In citation networks, instead, papers with many citations appear all the time. Barab\'asi, Wang and Song \cite{BarWang} investigate a model that incorporates these effects. On the basis of empirical data, they suggest a model where the aging function follows a lognormal distribution with paper-dependent parameters, and the preferential attachment function is the identity. In \cite{BarWang}, the fitness function is estimated rather than the more classical approach where it is taken to be i.i.d.. Hazoglou, Kulkarni, Skiena Dill in \cite{Hazo} propose a similar dynamics for citation evolution , but only considering the presence of aging and cumulative advantage without fitness. 

Tree models, arising when new vertices are added with only one edge, have been analyzed in \cite{Athr}, \cite{Athr2}, \cite{RudValko}, \cite{Rudas} and lead to continuous-time branching processes (CTBP). The degree distributions in tree models show identical qualitative behavior as for the non-tree setting, while their analysis is much simpler. Motivated by this and the wish to understand the qualitative behavior of PAMs with general aging and fitness, the starting point of our model is the CTBP or tree setting. Such processes have been intensively studied, due to their applications in other fields, such as biology. Detailed and rigorous analysis of CTBPs can be found in \cite{athrBook}, \cite{Jagers}, \cite{Nerman}, \cite{RudValko}, \cite{Athr}, \cite{Athr2}, \cite{Bhamidi}. A CTBP consists of individuals, whose children are born according to certain birth processes, these processes being i.i.d.\ across the individuals in the population. The birth processes $(V_t)_{t\geq0}$ are defined in term of point or jump processes on $\N$ \cite{Jagers}, \cite{Nerman}, where the birth times of children are the jump times of the process, and the number of children of an individual at time $t\in\R^+$ is given by $V_t$. 

\begin{figure}[b]
\centering
	\begin{minipage}[c]{.4\textwidth}
		%\centering
					\includegraphics[width=0.9\textwidth]{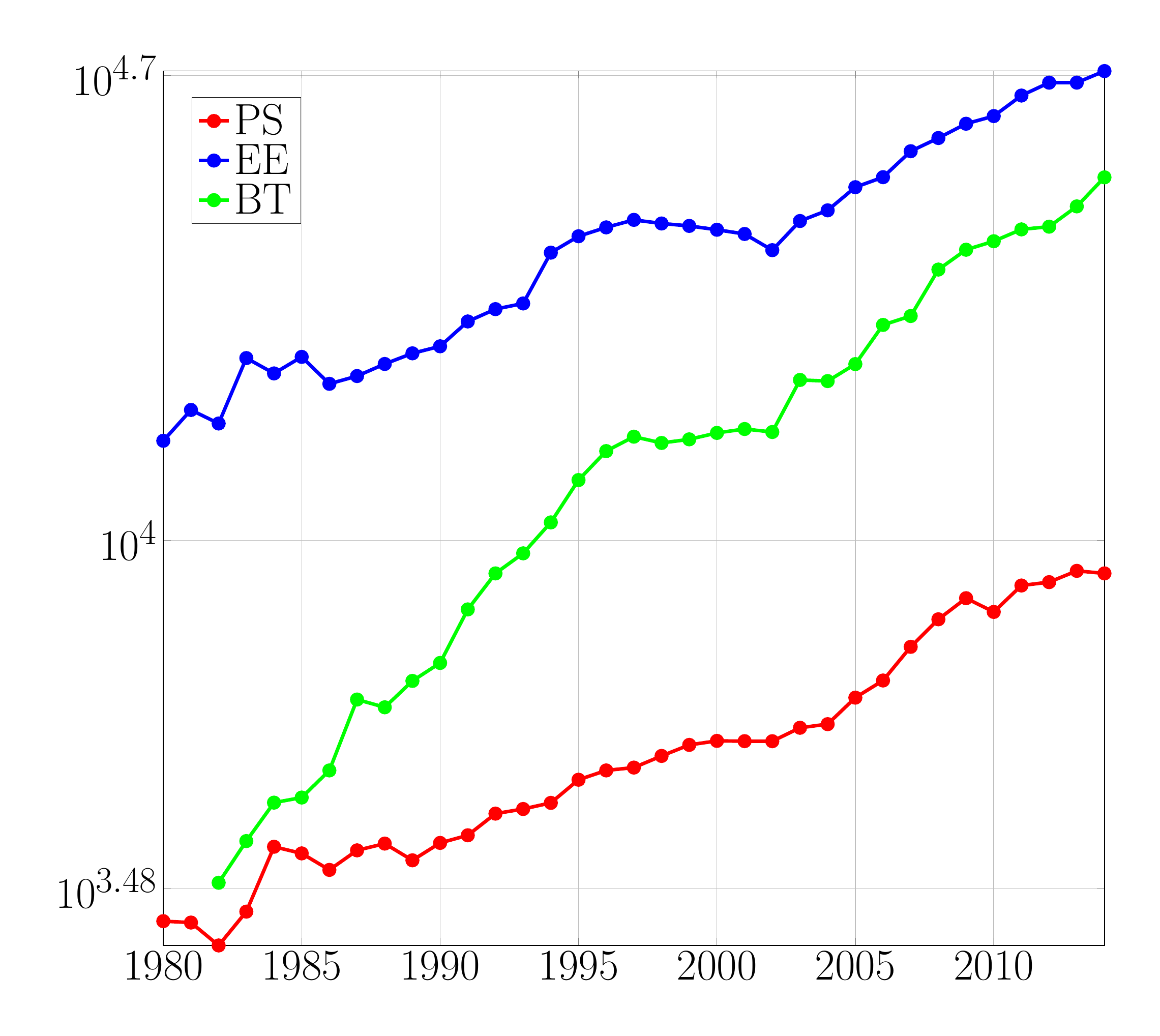} 
				\vspace{-0.3cm}
						\caption{} \label{fig-numberpublic}
						Number of publication per year (logarithmic Y axis).
	\end{minipage}%
		\hspace{15mm}%
	\begin{minipage}[c]{.45\textwidth}
				\includegraphics[width=0.98\textwidth]{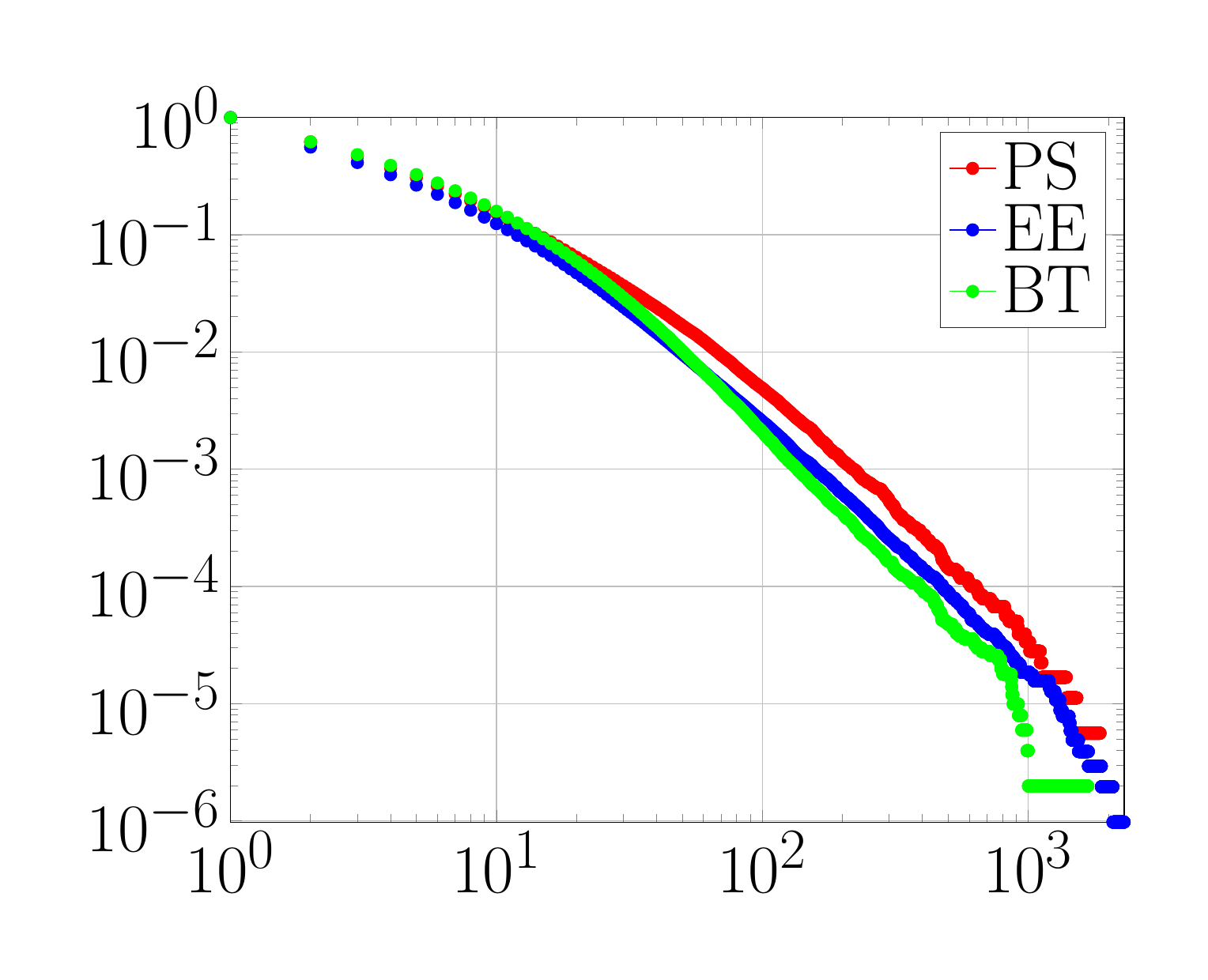} 
		\vspace{-0.3cm}
				\caption{} 		\label{fig-tailtogether}
		Loglog plot for the in-degree distribution tail in citation networks
	\end{minipage}
\end{figure}

In the literature, the CTBPs are used as a technical tool to study PAMs \cite{Athr2}, \cite{RudValko}, \cite{Rosen}. Indeed, the CTBP at the $n$th birth time follows the same law as the PAM consisting of $n$ vertices. In \cite{Athr2}, \cite{RudValko}, the authors prove an embedding theorem between branching processes and preferential attachment trees, and give a description of the degree distribution in terms of the asymptotic behavior of the weight function $w(\cdot)$. In particular, a power-law degree distribution is present in the case of (asymptotically) linear weight functions \cite{Rudas}. In the sub-linear case, instead, the degree distribution is {\itshape stretched-exponential}, while in the super-linear case it collapses, in the sense that one of the first vertices will receive all the incoming new edges after a certain step \cite{OliSpe05}. Due to the apparent exponential growth of the number of nodes in citation networks, we view the continuous-time process as the real network, which deviates from the usual perspective. Because of its motivating role in this paper, let us now discuss the empirical properties of citation networks in detail.

\subsection{Citation networks data}
\label{sec-citnet}
Let us now discuss the empirical properties of citation networks in more detail. We analyze the Web Of Science database, focusing on three different fields of science: {\em Probability and Statistics} (PS), {\em Electrical Engineering} (EE) and {\em Biotechnology and Applied Microbiology} (BT). We first point out some characteristics of citation networks that we wish to replicate in our models.

\begin{figure}[t]
	\centering
	\includegraphics[width = 0.3\textwidth]{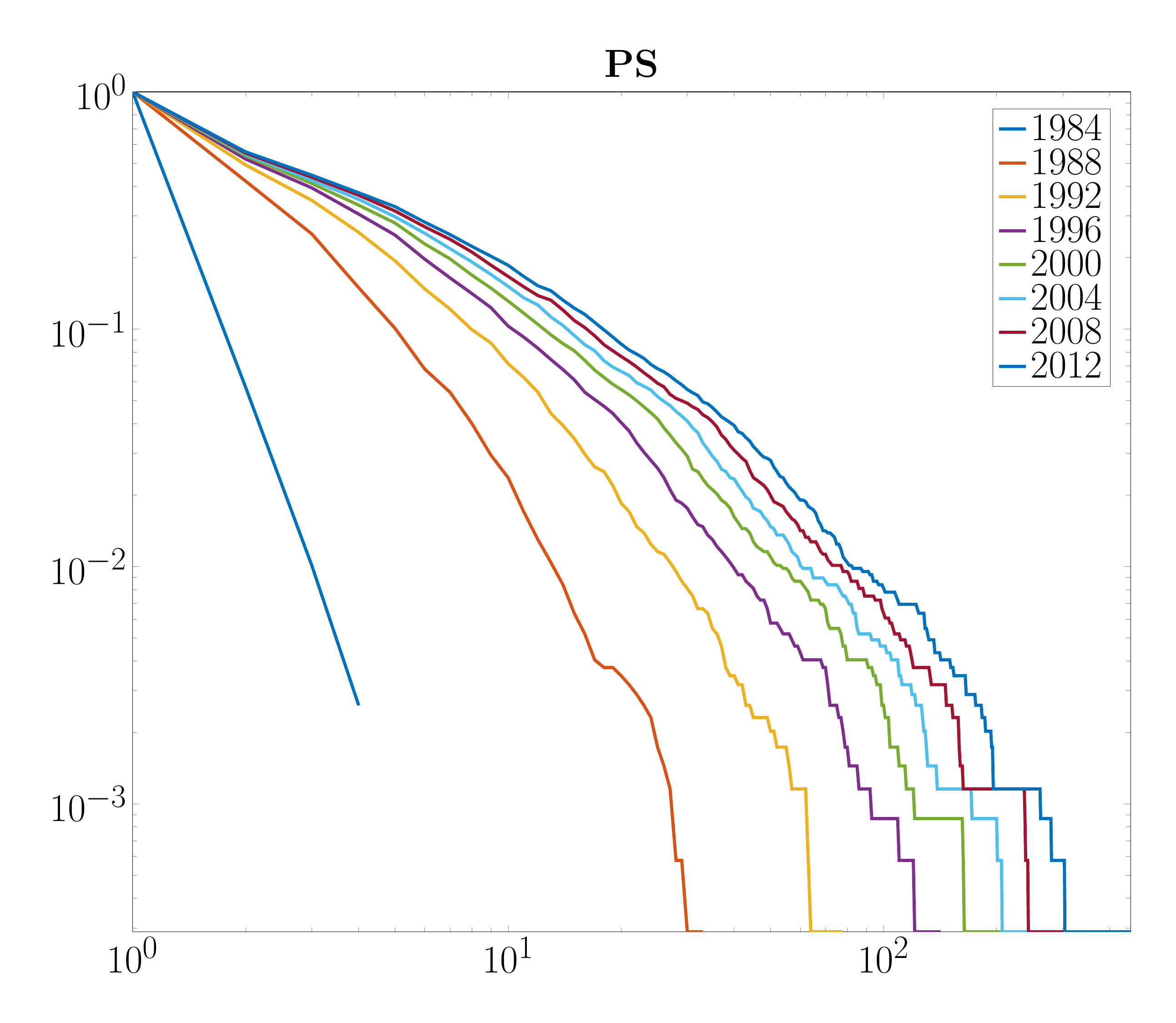}
	\includegraphics[width = 0.3\textwidth]{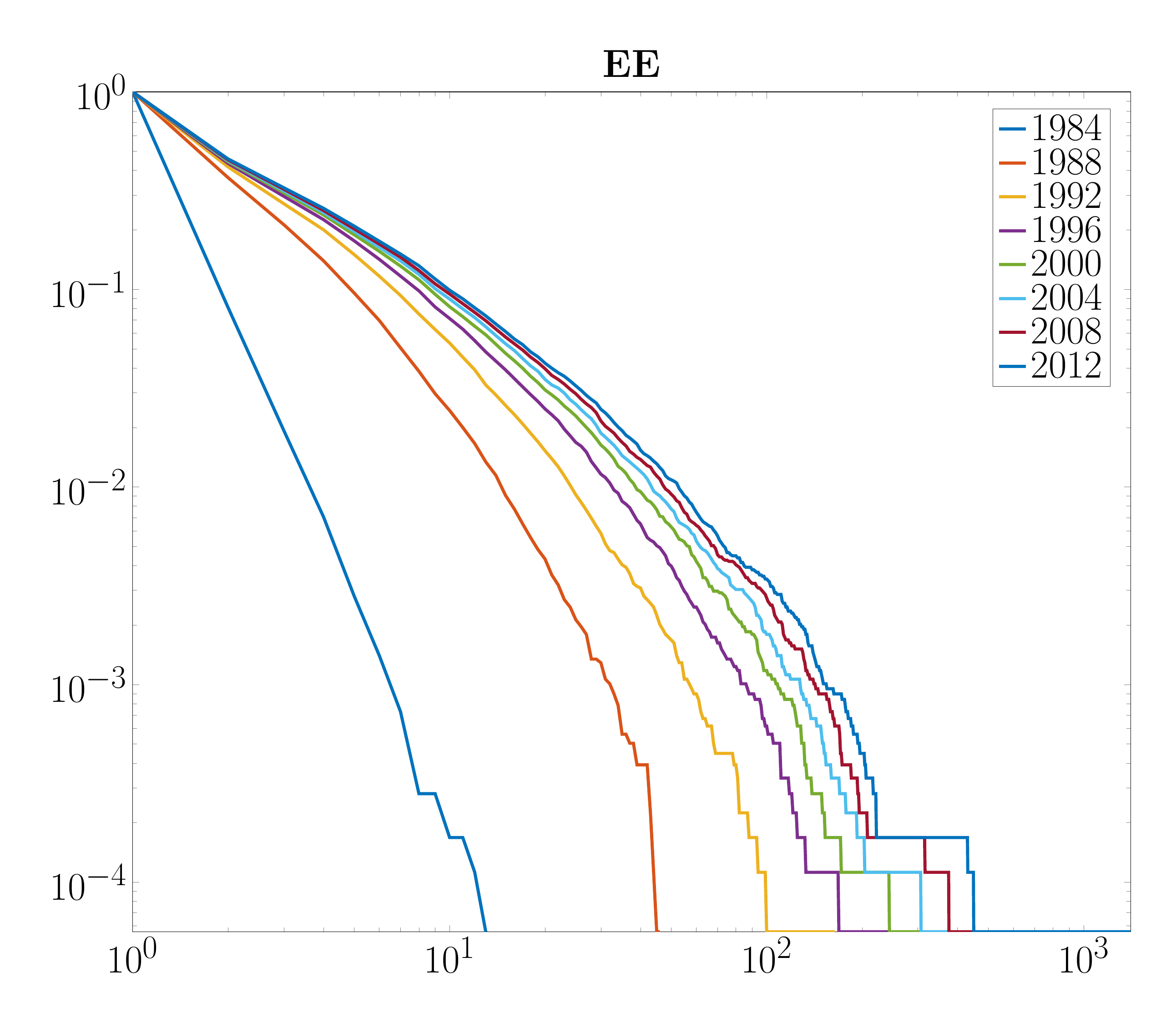}
	\includegraphics[width = 0.3\textwidth]{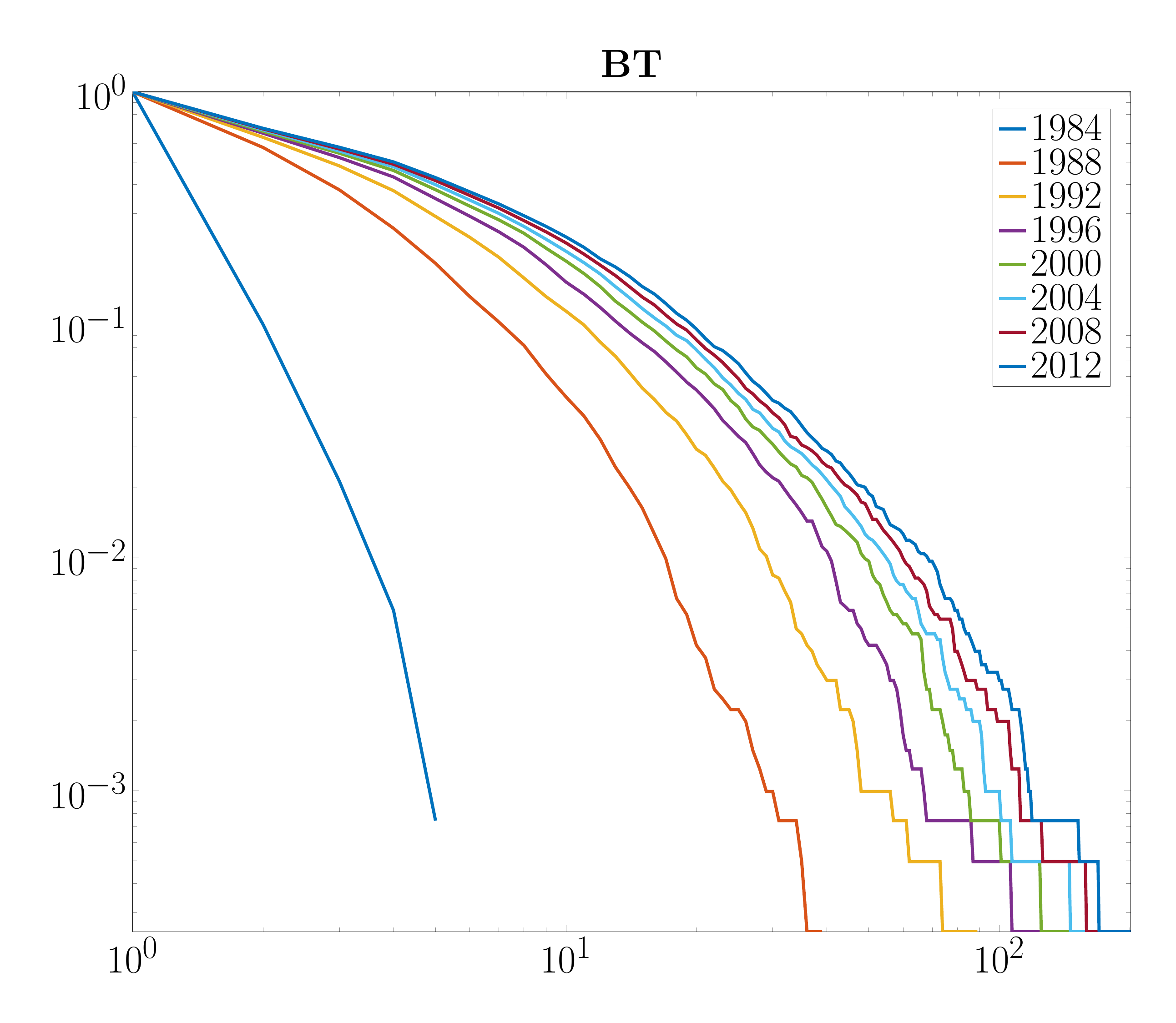}
\caption{Degree distribution for papers from 1984 over time.}
\label{fig-dunamycpowerlaw}
\end{figure}

Real-world citation networks possess five main characteristics:
\begin{figure}[b]
	\centering
	\includegraphics[width = 0.3\textwidth]{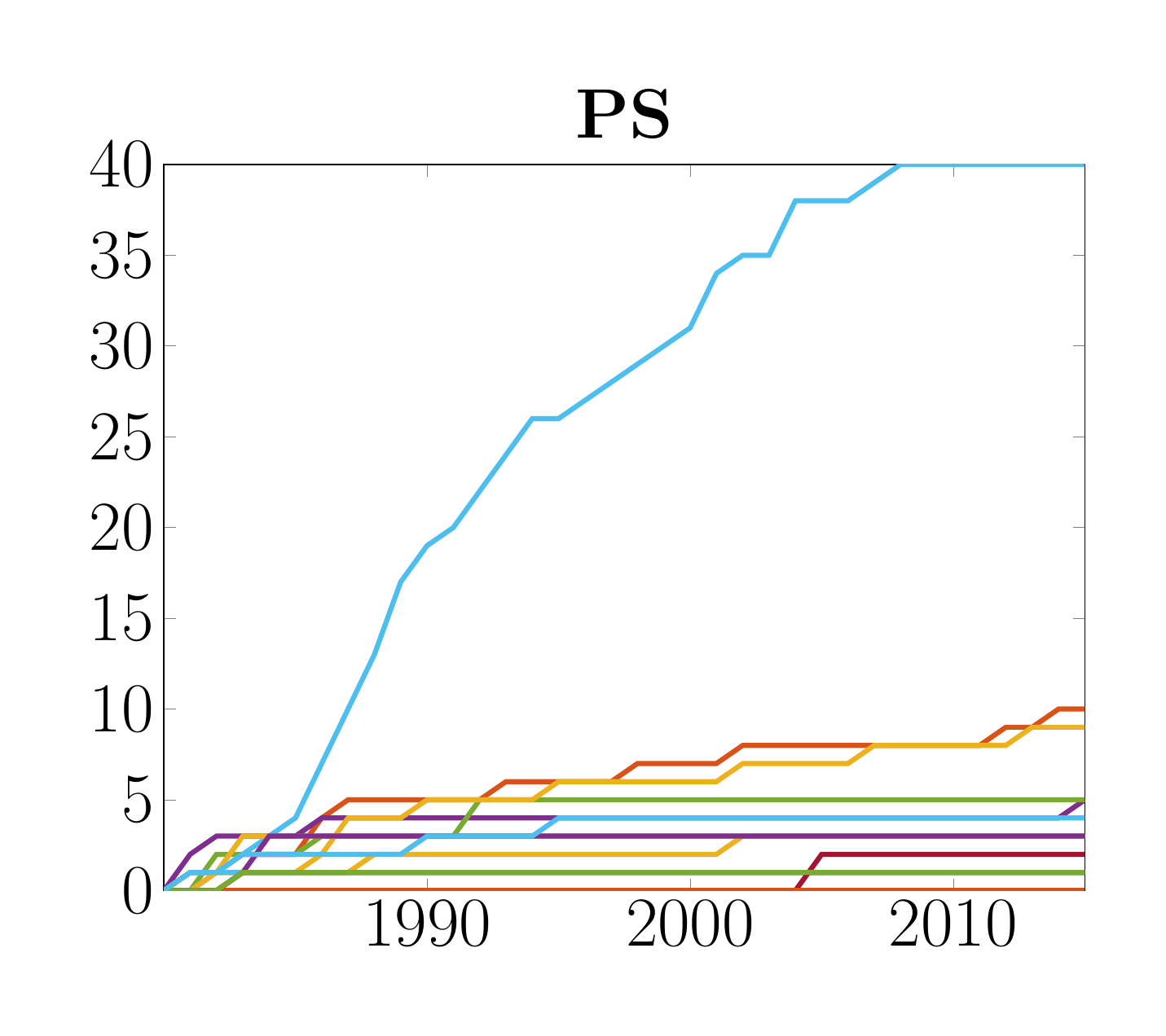}
	\includegraphics[width = 0.304\textwidth]{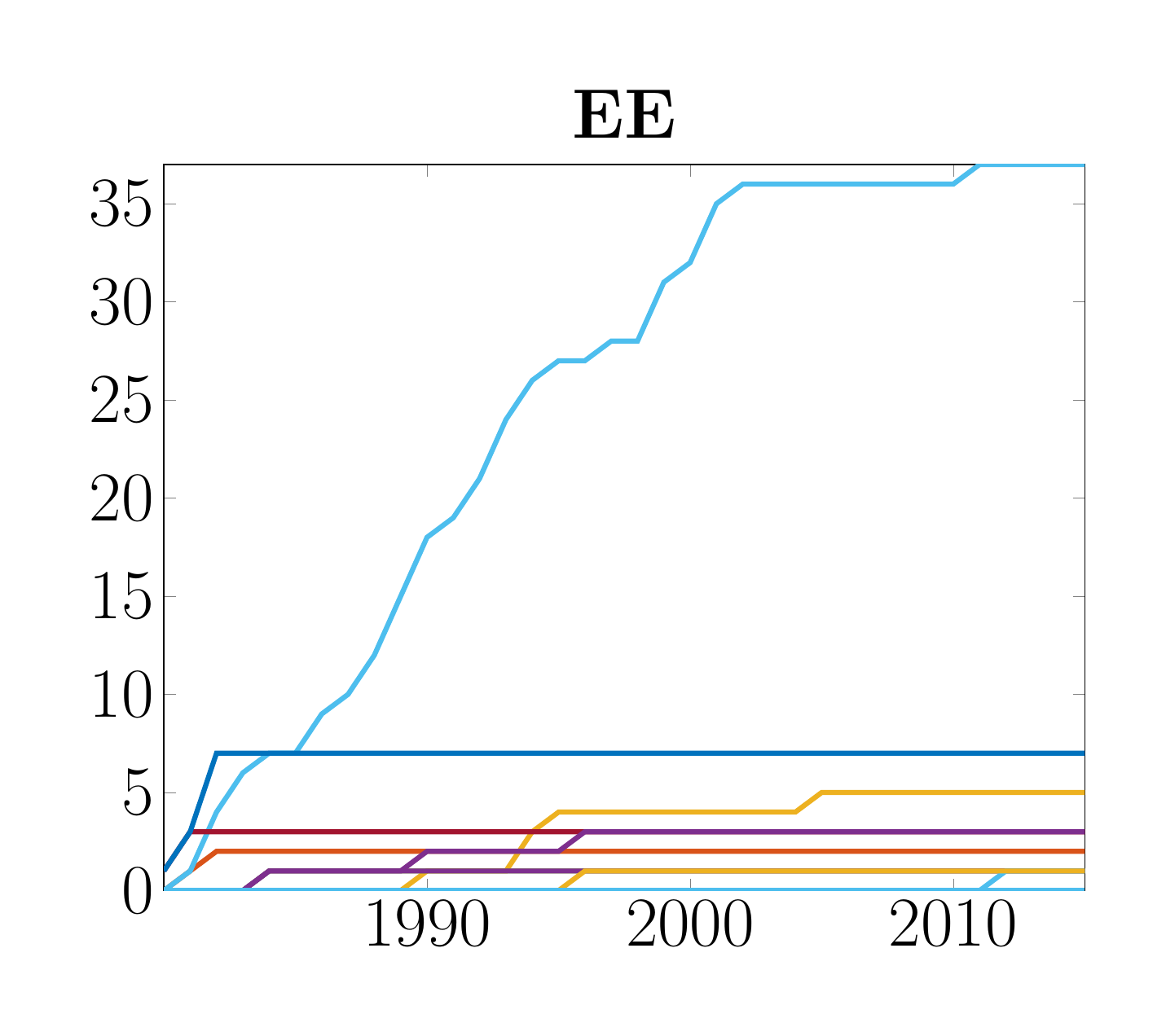}
	\includegraphics[width = 0.31\textwidth]{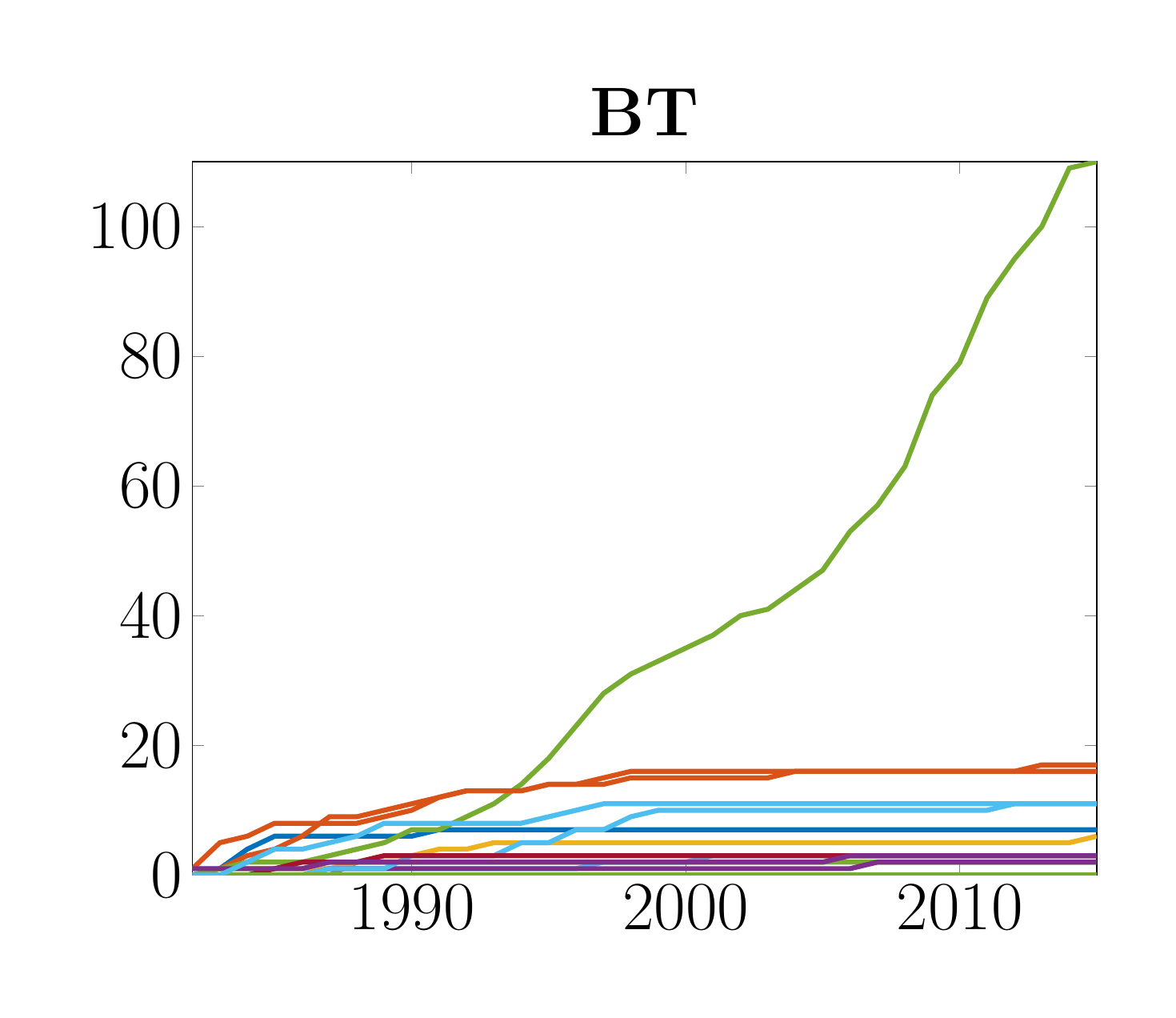}
	\caption{Time evolution for the number of citations of samples of 20 randomly chosen papers from 1980 for PS and EE, and from 1982 for BT.}
	\label{fig-randomsample}
\end{figure}

\begin{enumerate}
\item  In Figure \ref{fig-numberpublic}, we see that the number of scientific publications grows exponentially in time. While this is quite prominent in the data, it is unclear how this exponential growth arises. This could either be due to the fact that the number of journals that are listed in Web Of Science grows over time, or that journals contain more and more papers. 

\item In Figure \ref{fig-tailtogether}, we notice that these datasets have empirical power-law citation distributions. Thus, most papers attract few citations, but the amount of variability in the number of citations is rather substantial.
We are also interested in the dynamics of the citation distribution of the papers published in a given year, as time proceeds. This can be observed in Figure \ref{fig-dunamycpowerlaw}. We see a {\em dynamical power law}, meaning that at any time the degree distribution is close to a power law, but the exponent changes over time (and in fact decreases, which corresponds to heavier tails). When time grows quite large, the power law approaches a fixed value.

\begin{figure}[t]
	\centering
	\includegraphics[width = 0.31\textwidth]{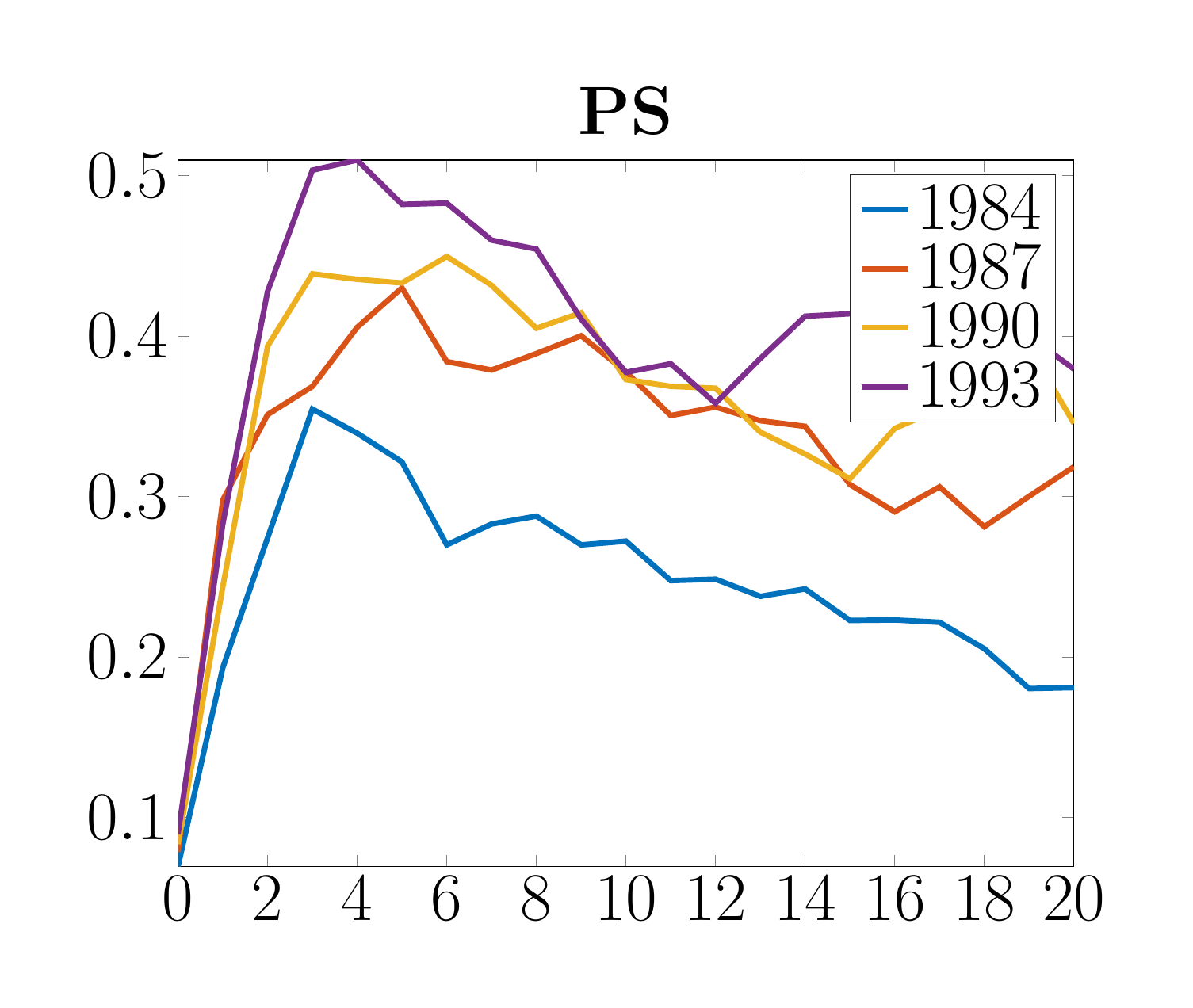}
	\includegraphics[width = 0.31\textwidth]{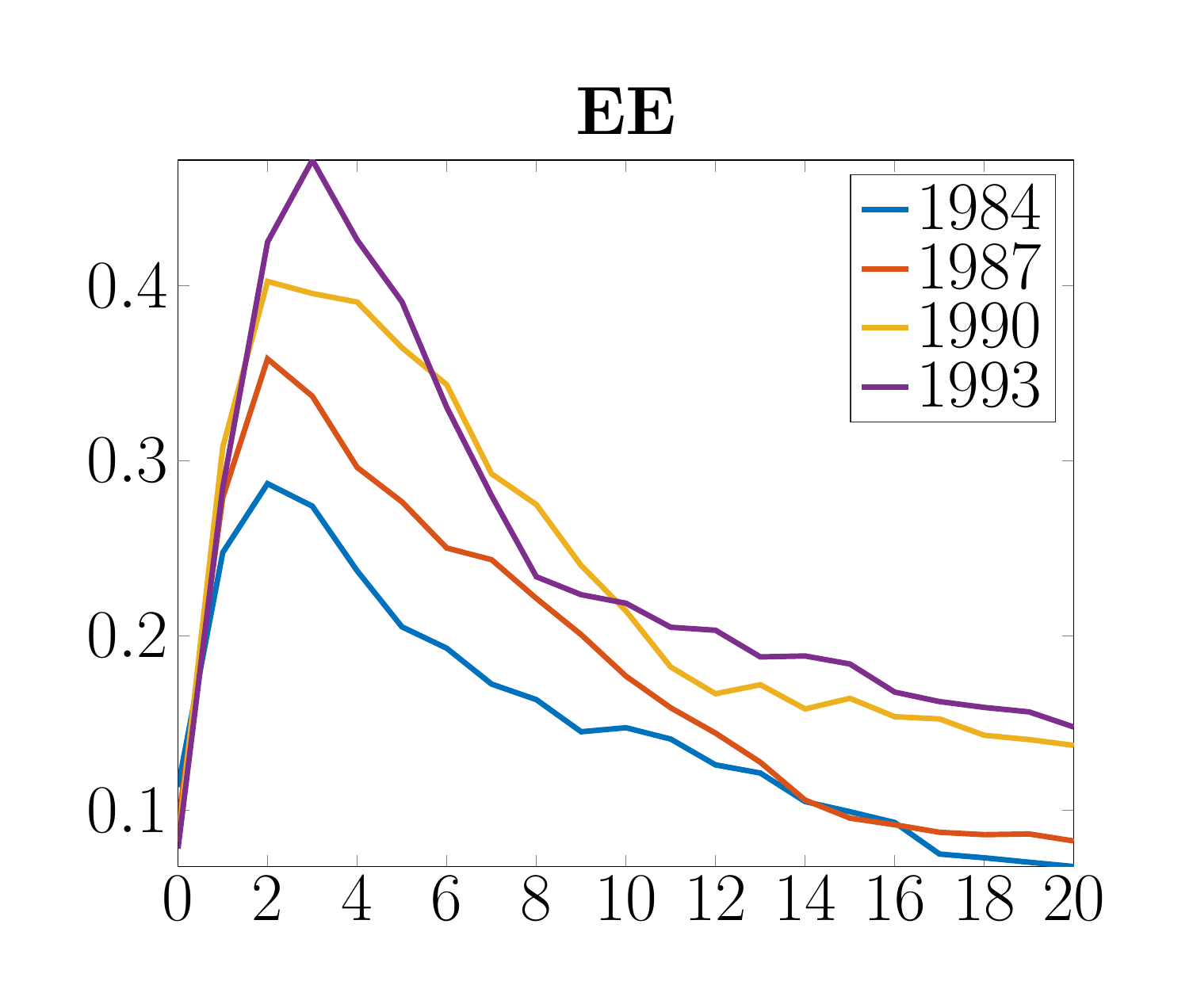}
	\includegraphics[width = 0.31\textwidth]{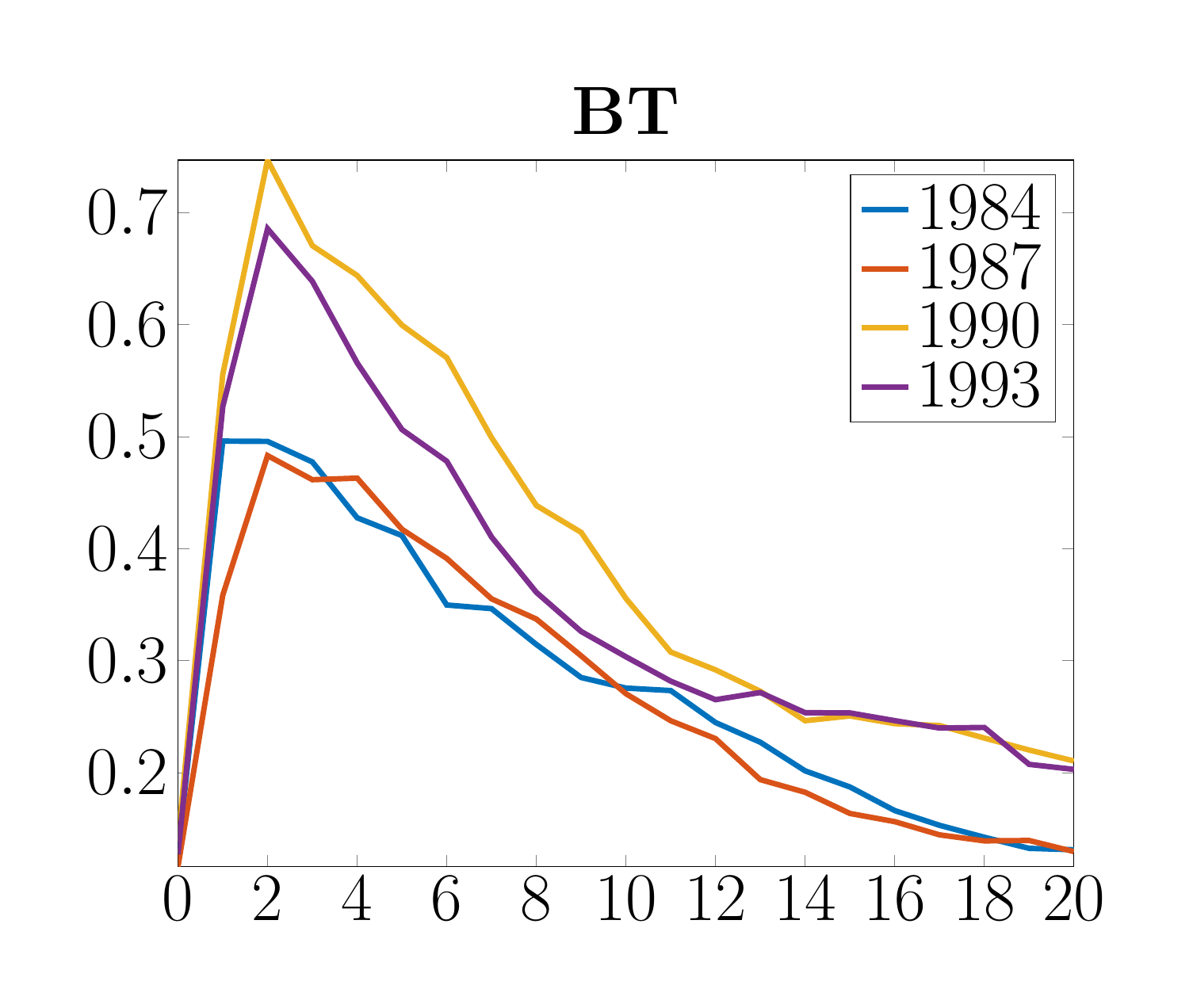}
\caption{Average degree increment over a 20-years time window for papers published in different years. PS presents an aging effect different from EE and BT, showing that papers in PS receive citations longer than papers in EE and BT.}
\label{fig-average_degree_increment}
\end{figure}

\begin{figure}[b]
	\centering
	\includegraphics[width = 0.31\textwidth]{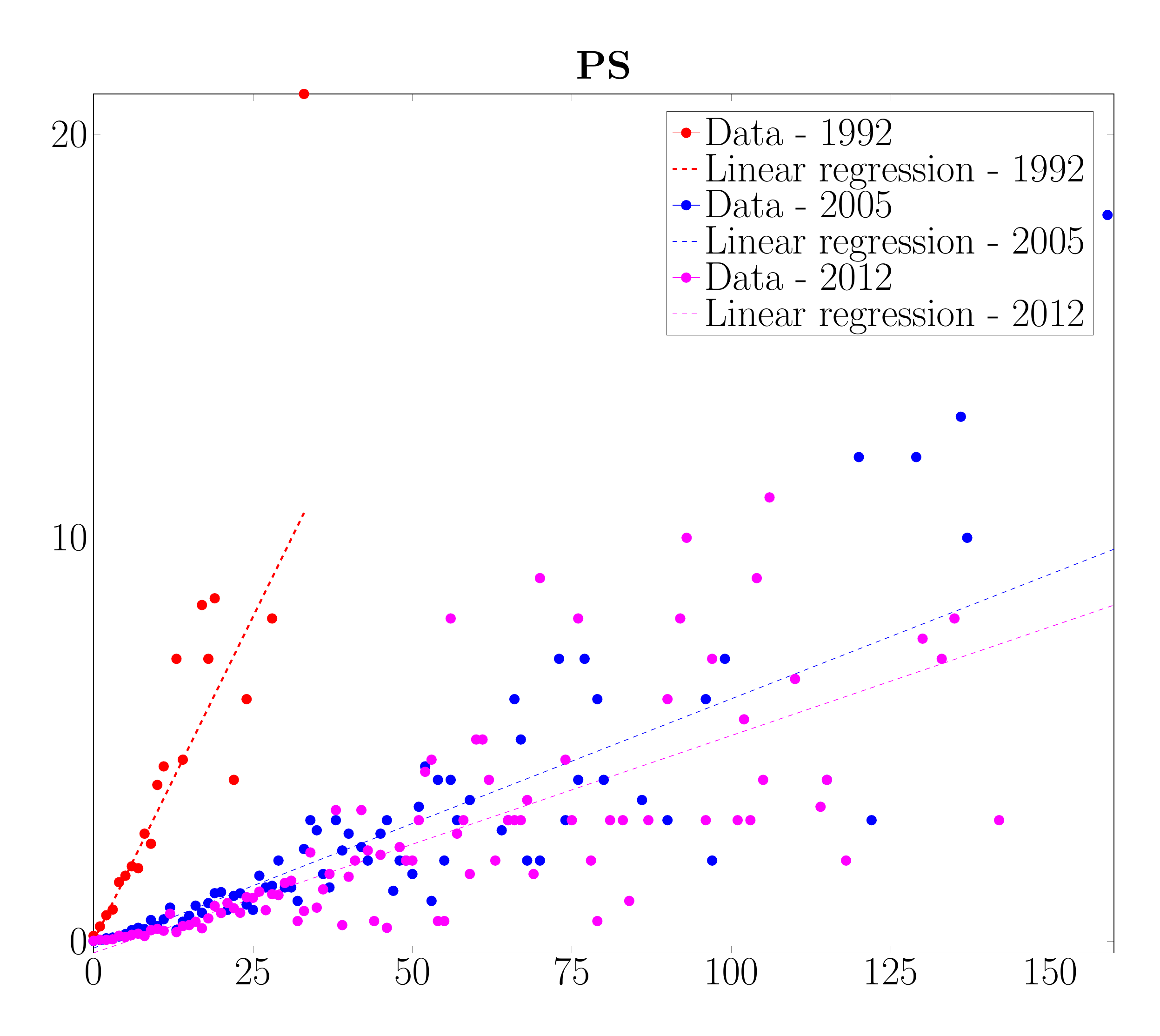}
	\includegraphics[width = 0.31\textwidth]{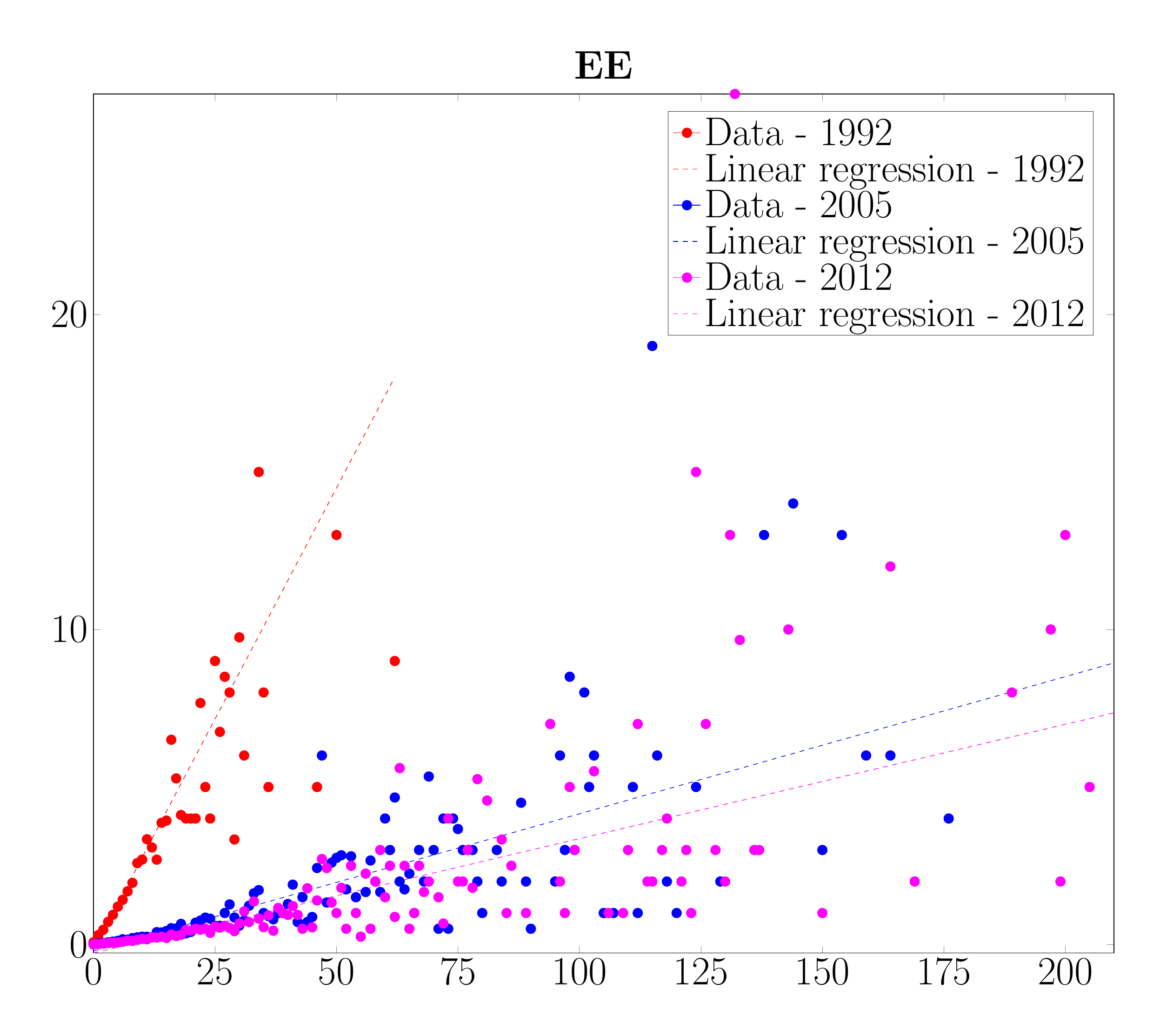}
	\includegraphics[width = 0.31\textwidth]{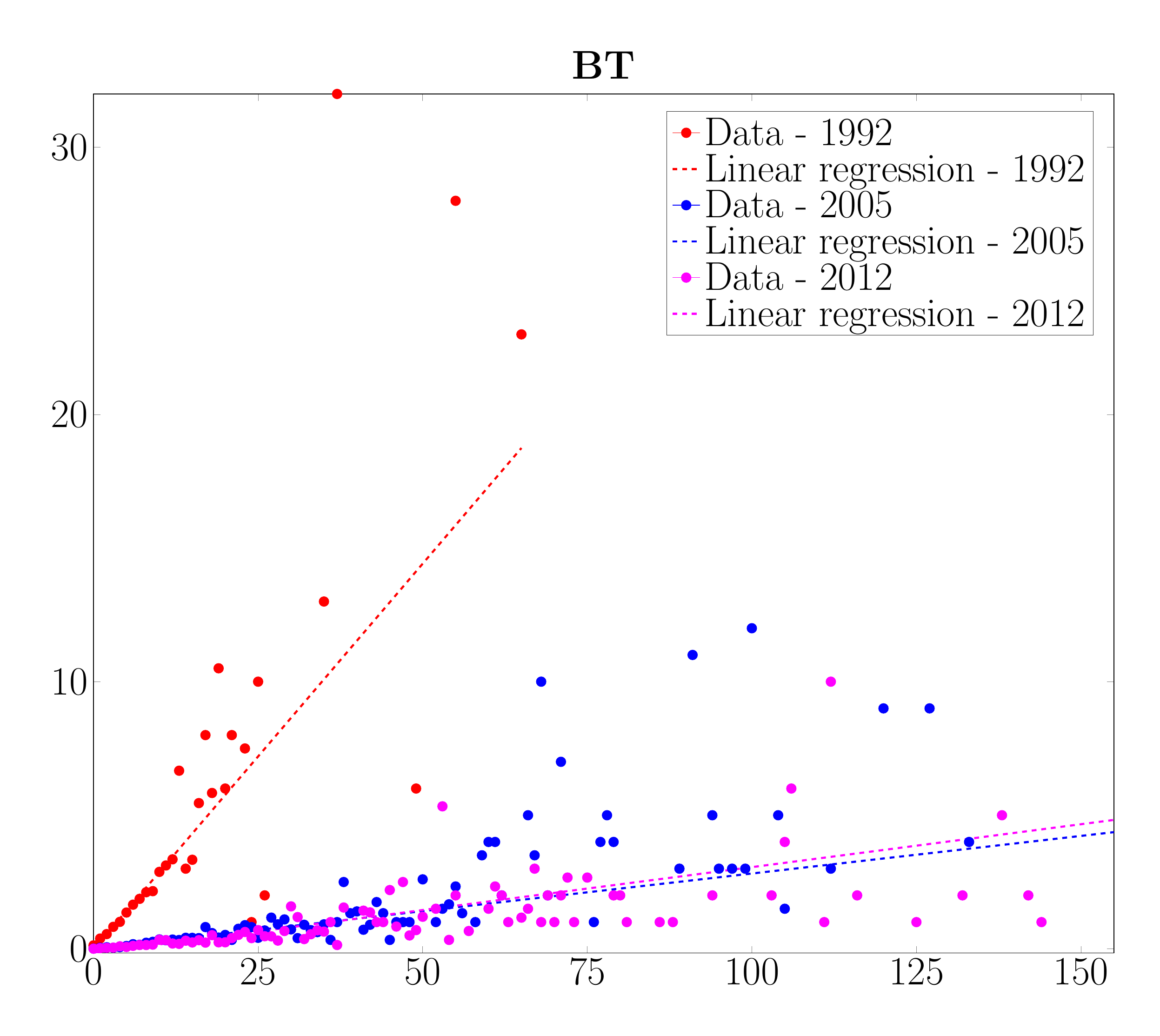}
	\caption{Linear dependence between past and future number of citations for papers from 1988.}
	\label{fig-lindep}
\end{figure}

\item In Figure \ref{fig-randomsample}, we see that the majority of papers stop receiving citations after some time, while few others keep being cited for longer times. This inhomogeneity in the evolution of node degrees is not present in classical PAMs, where the degree of {\em every} fixed vertex grows as a positive power of the graph size. Figure \ref{fig-randomsample} shows that the number of citations of papers published in the same year can be rather different, and the majority of papers actually stop receiving citations quite soon. In particular, after a first increase, the average increment of citations decreases over time (see Figure \ref{fig-average_degree_increment}). We observe a difference in this aging effect between the PS dataset and the other two datasets, due to the fact that in PS, scientists tend to cite older papers than in EE or BT. Nevertheless the average increment of citations received by papers in different years tends to decrease over time for all three datasets.

%\begin{wrapfigure}{r}{0.5\textwidth}
%\vspace{-10pt}
%		\includegraphics[width=0.5\textwidth]{Pictures/Figure_in_tail_together-eps-converted-to.pdf} 
%\caption{LogLog plot for the in-degree distribution tail in citation networks}
%\label{fig-tailtogether}
%\end{wrapfigure}

\item Figure \ref{fig-lindep} shows the linear dependence between the past number of citations of a paper and the future ones. Each plot represents the average number of citations received by papers published in 1984 in the years 1993, 2006 and 2013 according to the initial number of citations in the same year. At least for low values of the starting number of citations, we see that the average number of citations received during a year grows linearly. This suggests that the attractiveness of a paper depends on the past number of citations through an affine function.

\item A last characteristic that we observe is the lognormal distribution of the age of cited papers. In Figure \ref{fig-ageCited}, we plot the distribution of cited papers, looking at references made by papers in different years. We have used a 20 years time window in order to compare different citing years. Notice that this lognormal distribution seems to be very similar within different years, and the shape is similar over different fields.

\end{enumerate}
\begin{figure}[t]
	\centering
	\includegraphics[width = 0.31\textwidth]{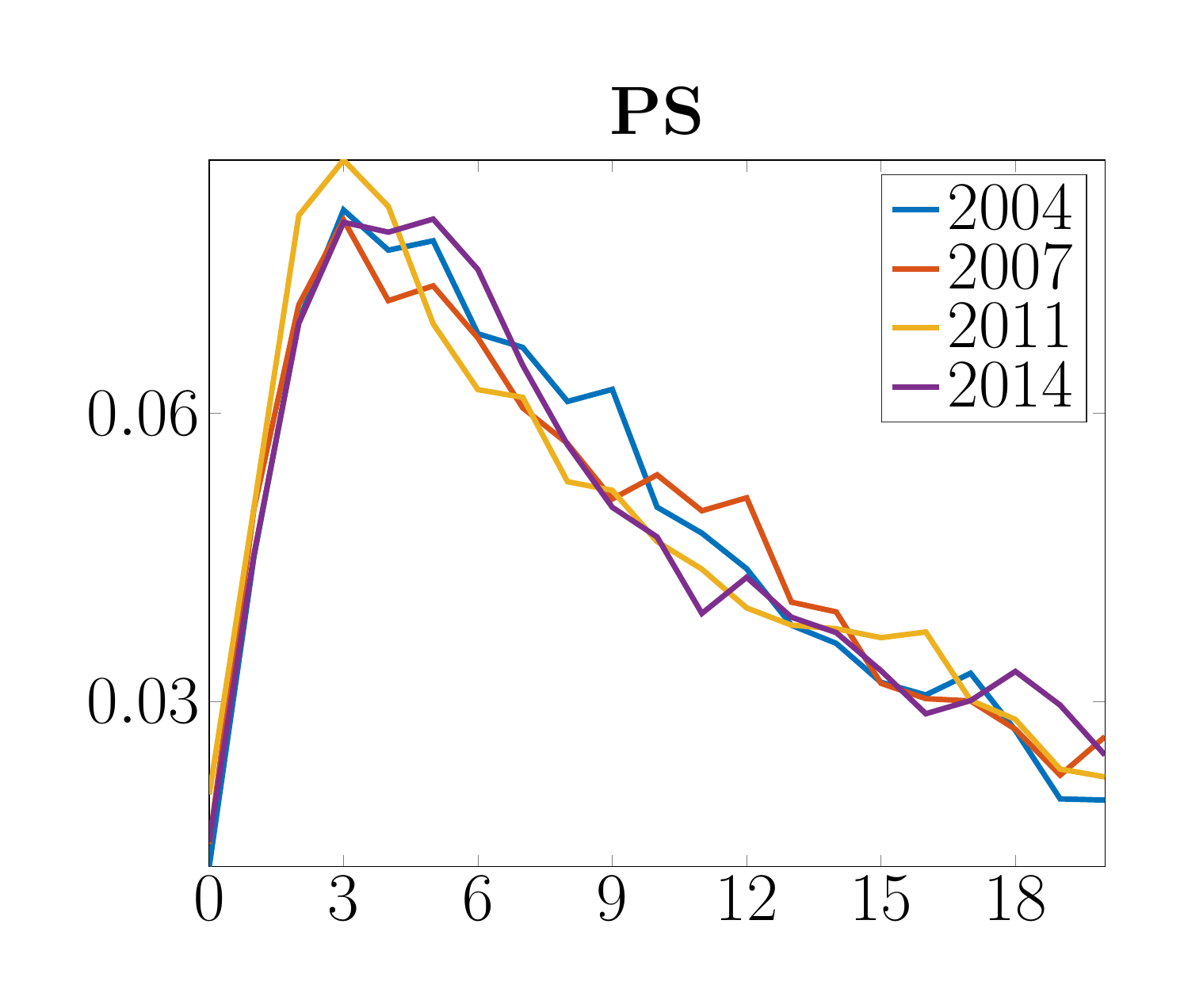}
	\includegraphics[width = 0.31\textwidth]{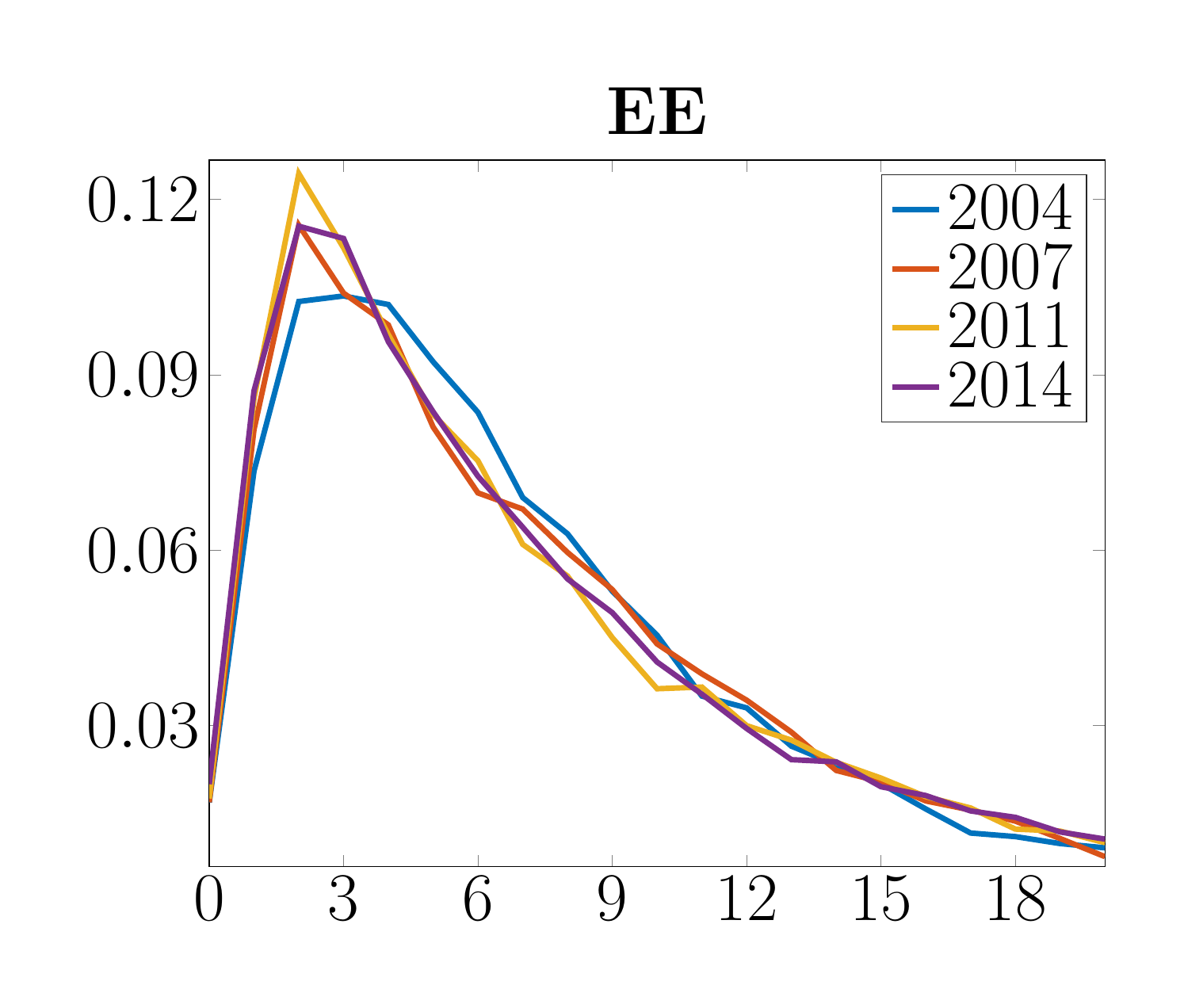}
	\includegraphics[width = 0.31\textwidth]{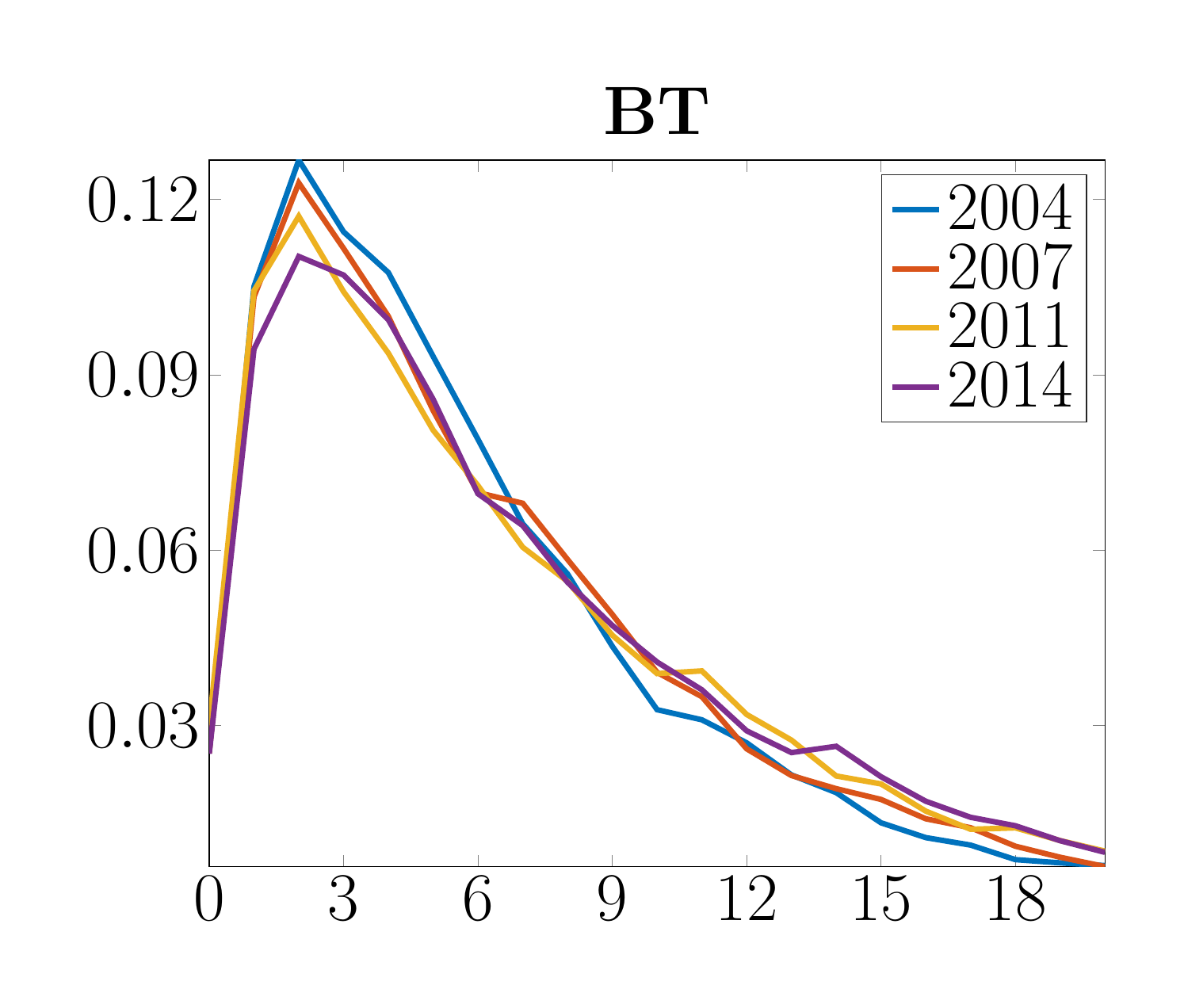}
\caption{Distribution of the age of cited papers for different citing years.}
\label{fig-ageCited}
\end{figure}

Let us now explain how we translate the above empirical characteristics into our model. First, CTBPs grow exponentially over time, as observed in citation networks. Secondly, the aging present in citation networks, as seen both in  Figures \ref{fig-randomsample} and \ref{fig-average_degree_increment}, suggests that citation rates become smaller for large times, in such a way that typical papers stop receiving citations at some (random) point in time. The hardest characteristic to explain is the power-law degree sequence. 
For this, we note that citations of papers are influenced by many {\itshape external factors} that affect the attractiveness of papers (the journal, the authors, the topic,\ldots). Since this cannot be quantified explicitly, we introduce another source of randomness in our birth processes that we call  {\itshape fitness}. This appears in the form of multiplicative factors of the attractiveness of a paper, and for lack of better knowledge, we take these factors  to be i.i.d.\ across papers, as often assumed in the literature. These assumptions are similar in spirit as the ones by Barab\'asi et al.\ \cite{BarWang}, which were also motivated by citation data, and we formalize and extend their results considerably. In particular, we give the precise conditions under which power-law citation counts are observed in this model.

Our main goal is to define CTBPs with both aging as well as random fitness that keep having a power-law decay in the in-degree distribution. Before discussing our model in detail in Section \ref{sec-mainres}, we present the heuristic ideas behind it as well as the main results of this paper.

\subsection{Our main contribution}
\label{sec-maincontribution}
The crucial point of this work is to show that it is possible to obtain power-law degree distributions in preferential attachment trees where the birth process {\em is not just depending on an asymptotically linear weight sequence}, in the presence of {\em integrable aging} and {\em fitness}. Let us now briefly explain how these two effects change the behavior of the degree distribution. 
\medskip

\paragraph{\bf Integrable aging and affine preferential attachment without fitness.}
In the presence of aging but without fitness, we show that the aging effect substantially slows down the birth process. In the case of affine weights, aging destroys the power-law of the stationary regime, generating a limiting distribution that consists of a power law with exponential truncation. We prove this under reasonable conditions on the underlying aging function (see Lemma \ref{Lem-adaptLap-age}).
\medskip

\paragraph{\bf Integrable aging and super-linear preferential attachment without fitness.}
Since the aging destroys the power-law of the affine PA case, it is natural to ask whether the combination of integrable aging and {\em super-linear} weights restores the power-law limiting degree distribution. Theorem \ref{th-explosive} states that this is not the case, as super-linear weights imply explosiveness of the branching process, which is clearly unrealistic in the setting of citation networks  (here, we call a weight sequence $k\mapsto f_k$ {\em super-linear} when $\sum_{k\geq 1} 1/f_k<\infty$). This result is quite general, because it holds for  {\em any} integrable aging function. Due to this, it is impossible to obtain power-laws from super-linear preferential attachment weights. This suggests that (apart from slowly-varying functions), affine preferential attachment weights have the strongest possible growth, while maintaining exponential (and thus, in particular, non-explosive) growth.
\medskip

\paragraph{\bf Integrable aging and affine preferential attachment with unbounded fitness.}
In the case of aging and fitness, the asymptotic behavior of the limiting degree distribution is rather involved. We estimate the asymptotic decay of the limiting degree distribution with affine weights in Proposition \ref{prop-pkasym_fitage}. With the example fitness classes analyzed in Section \ref{sec-fitexamples}, we prove that power-law tails are possible in the setting of aging and fitness, at least when the fitness has roughly exponential tail. So far, PAMs with fitness required the support of the fitness distribution to be {\em bounded}. The addition of aging allows the support of the fitness distribution to be unbounded, a feature that seems reasonable to us in the context of citation networks. Indeed, the relative attractivity of one paper compared to another one can be enormous, which is inconsistent with a bounded fitness distribution. While we do not know precisely what the necessary and sufficient conditions are on the aging and the fitness distribution to assure a power-law degree distribution, our results suggests that affine PA weights with integrable aging and fitnesses with at most an exponential tail in general do so, a feature that was not observed before.

\medskip

\paragraph{\bf Dynamical power laws.} In the case of fitness with exponential tails, we further observe that the number of citations of a paper of age $t$ has a power-law distribution with an exponent that depends on $t$. We call this a {\em dynamical power law}, and it is a possible explanation of the dynamical power laws observed in citation data (see Figure \ref{fig-dunamycpowerlaw}).
\medskip

\paragraph{\bf Universality.} An interesting and highly relevant observation in this paper is that the limiting degree distribution of preferential attachment trees with aging and fitness shows a high amount of {\em universality}. Indeed, for integrable aging functions, the dependence on the precise choice of the aging function seems to be minor, except for the total integral of the aging function. Further, the dependence on fitness is quite robust as well.

\section{Our model and main results}
\label{sec-mainres}
In this paper we introduce the effect of aging and fitness in $\CTBP$ populations, giving rise to directed trees. Our model is motivated by the study of {\em citation networks}, which can be seen as directed graphs. Trees are the simplest case in which we can see the effects of aging and fitness. Previous work has shown that PAMs can be obtained from PA trees by collapsing, and their general degree structure can be quite well understood from those in trees.
For example, PAMs with fixed out-degree $m\geq 2$ can be defined through a collapsing procedure, where a vertex in the multigraph is formed by $m\in\N$ vertices in the tree (see \cite[Section 8.2]{vdH1}). In this case, the limiting degree distribution of the PAM preserve the structure of the tree case (\cite[Section 8.4]{vdH1}, \cite[Section 5.7]{Bhamidi}).
This explains the relevance of the tree case results for the study of the effect of aging and fitness in PAMs.
It could be highly interesting to prove this rigorously.

\subsection{Our CTBP model}
\label{se-mainres}
CTBPs represent a population made of individuals producing children independently from each other, according to i.i.d. copies of a birth process on $\N$. We present the general theory of CTBPs in Section \ref{sec-generalth}, where we define such processes in detail and we refer to general results that are used throughout the paper. In general, considering a birth process $(V_t)_{t\geq0}$ on $\N$, every individual in the population has an i.i.d. copy of the process $(V_t)_{t\geq0}$, and the number of children of individual $x$ at time $t$ is given by the value of the process $V^x_t$. We consider birth processes defined by a sequence of weights $(f_k)_{k\in\N}$ describing the birth rates. Here, the time between the $k$th and the $(k+1)$st jump is exponentially distributed with parameter $f_k$. The behavior of the whole population is determined by this sequence. 

The fundamental theorem for the CTBPs that we study is Theorem \ref{th-expogrowth} quoted in Section \ref{sec-generalth}. It states that, under some hypotheses on the birth process $(V_t)_{t\geq0}$, the population grows exponentially in time, which nicely fits the exponential growth of scientific publications as indicated in Figure \ref{fig-numberpublic}. Further, using a so-called {\em random vertex characteristic} as introduced in \cite{Jagers}, a complete class of properties of the population can be described, such as the fraction of individuals having $k$ children, as we investigate in this paper. The two main properties are stated in Definitions \ref{def-supercr} and \ref{def-malthus}, and are called {\em supercritical} and {\em Malthusian} properties. These properties require that there exists a positive value $\alpha^*$ such that 
	$$
	\E\left[V_{T_{\alpha^*}}\right]=1, \quad \quad \mbox{ and }\quad \quad
	 -\left.\frac{d}{d\alpha}\E\left[V_{T_{\alpha}}\right]\right|_{\alpha=\alpha^*}<\infty,
	$$
where $T_\alpha$ denotes an exponentially distributed random variable with rate $\alpha$ independent of the process $(V_t)_{t\geq0}$.
The unique value $\alpha^*$ that satisfies both conditions is called the {\em Malthusian parameter}, and it describes the exponential growth rate of the population size. The aim is to investigate the ratio
	$$
	\frac{\mbox{number of individuals with}~k~\mbox{children at time}~t}{\mbox{size total population at time}~t}.
	$$
According to Theorem \ref{th-expogrowth}, this ratio converges almost surely to a deterministic limiting value $p_k$. The sequence $(p_k)_{k\in\N}$, which we refer to as the limiting degree distribution of the CTBP (see Definition \ref{def-limitdistr}), is given by 
	$$
	p_k = \E\left[\pr\left(V_u=k\right)_{u=T_{\alpha^*}}\right].
	$$
The starting idea of our model of citation networks is that, given the history of the process up to time $t$,
	\eqn{
	\label{for-heuristic}
	\mbox{the rate of an individual of age}~t~\mbox{and}~k~\mbox{children to generate a new child is}~Yf_kg(t),
	}
where $f_k$ is a non-decreasing PA function of the degree, $g$ is an integrable function of time, and $Y$ is a positive random variable called fitness. Therefore, the likelihood to generate children increases by having many children and/or a high fitness, while it is reduced by age. 

Recalling Figure \ref{fig-lindep}, we assume that the PA function $f$ is affine, so $f_k = ak+b$. In terms of a PA scheme, this implies
$$
	\pr\left(\mbox{a paper cites another with past}~k~\mbox{citations}~|~\mbox{past}\right) \approx \frac{n(k) (ak+b)}{A},
$$
where $n(k)$ denotes the number of papers with $k$ past citations, and $A$ is the normalization factor. Such behavior has already been observed by Redner \cite{Redner3}  and Barab\'asi et al.\ \cite{BarJeoNed}).

We assume throughout the paper that the aging function $g$ is integrable. In fact, we start by the fact that the age of cited papers is lognormally distributed (recall Figure \ref{fig-ageCited}).
By normalizing such a distribution by the average increment in the number of citations of papers in the selected time window, we identify a universal function $g(t)$. Such function can be approximated by a lognormal shape of the form
$$
	g(t) \approx c_1\e^{-c_2(\log(t+1)-c_3)^2},
$$
for $c_1$, $c_2$ and $c_3$ field-dependent parameters. In particular, from the procedure used to define $g(t)$, we observe that
$$
	g(t)\approx \frac{\mbox{number of references to year}~t}{\mbox{number of papers of age}~t}~
		\frac{\mbox{total number of papers considered}}{\mbox{total number of references considered}},
$$
which means in terms of PA mechanisms that
$$
	\pr\left(\mbox{a paper cites another of age}~t~|~\mbox{past}\right)\approx
		\frac{n(t)g(t)}{B},
$$
where $B$ is the normalization factor, while this time $n(t)$ is the number of paper of age $t$. This suggests that the citing probability depends on age through a lognormal aging function $g(t)$, which is integrable. This is one of the main assumptions in our model, as we discuss in Section \ref{sec-maincontribution}.

It is known from the literature (\cite{Rudas}, \cite{RudValko}, \cite{Athr}) that CTBPs show power-law limiting degree distributions when the infinitesimal rates of jump depend only on a sequence $(f_k)_{k\in\N}$ that is asymptotically {\em linear}. Our main aim is to investigate whether power-laws can also arise in branching processes that include aging and fitness. The results are organized as follows. In Section \ref{sec-res-aging}, we discuss the results for CTBPs with aging in the absence of fitness. In Section \ref{sec-res-aging-fitness}, we present the results with aging and fitness. In Section \ref{sec-res-aging-fitness-exp}, we specialize to fitness with distributions with exponential tails, where we show that the limiting degree distribution is a power law with a {\em dynamic} power-law exponent. 

\subsection{Results with aging without fitness}
\label{sec-res-aging}
In this section, we focus on aging in PA trees in the absence of fitness. The aging process can then be viewed as a time-changed stationary birth process (see Definition \ref{def-statnonfit}). A stationary birth process is a stochastic process $(V_t)_{t\geq0}$ such that, for $h$ small enough, 
$$
	\pr\left(V_{t+h}=k+1 \mid V_t=k\right) = f_kh+o(h).
$$
In general, we assume that $k\mapsto f_k$ is increasing. The {\em affine case} arises when $f_k = ak+b$ with $a,b>0$. By our observations in Figure \ref{fig-lindep}, as well as related works (\cite{Redner3}, \cite{BarJeoNed}), the affine case is a reasonable approximation for the attachment rates in citation networks.

For a stationary birth process $(V_t)_{t\geq0}$, under the assumption that it is supercritical and Malthusian, the limiting degree distribution $(p_k)_{k\in\N}$ of the corresponding branching process is given by
	\eqn{
	\label{deg-distr-PA-tree}
	p_k = \frac{\alpha^*}{\alpha^* + f_k}\prod_{i=0}^{k-1}\frac{f_i}{\alpha^* + f_i}.
	}
For a more detailed description, we refer to Section \ref{sec-stat-nonfit}. Branching processes defined by stationary processes (with no aging effect) have a so-called {\itshape old-get-richer} effect. As this is not what we observe in citation networks (recall Figure \ref{fig-randomsample}), we want to introduce {\em aging} in the reproduction process of individuals. The aging process arises by adding 
age-dependence in the infinitesimal transition probabilities:

\begin{Definition}[Aging birth processes]
\label{def-nonstatbirth}
Consider a non-decreasing PA sequence $(f_k)_{k\in\N}$ of positive real numbers and an aging function $g\colon \R^+\rightarrow\R^+$. We call a stochastic process $(N_t)_{t\geq 0}$ an {\em aging birth process} (without fitness) when
\begin{enumerate}
	\item $N_0=0$, and $N_t\in\N$ for all $t\in\N$;
	\item $N_t\leq N_s$ for every $t\leq s$;
	\item for fixed $k\in\N$ and $t\geq0$, as $h\rightarrow0$,
	$$
		\pr\left(N_{t+h}=k+1 \mid N_t=k\right) = f_k g(t)h + o(h).
	$$
\end{enumerate}
\end{Definition}
Aging processes are time-rescaled versions of the corresponding stationary process defined by the same sequence $(f_k)_{k\in\N}$. In particular, for any $t\geq0$, $N_t$ has the same distribution as $V_{G(t)}$, where $G(t) = \int_0^tg(s)ds$.
In general, we assume that the aging function is {\em integrable}, which means that $G(\infty) := \int_0^\infty g(s)ds<\infty$. This implies that the number of children of a single individual in its entire lifetime has distribution $V_{G(\infty)}$, which is finite in expectation. In terms of citation networks, this assumption is reasonable since we do not expect papers to receive an infinite number of citations ever (recall Figure \ref{fig-average_degree_increment}). Instead, for the stationary process $(V_t)_{t\geq0}$ in Definition \ref{def-statnonfit}, we have that $\pr$-a.s.\ $V_t\rightarrow\infty$, so that also the aging process diverges $\pr$-a.s.\ when $G(\infty) = \infty$. 

For aging processes, the main result is the following theorem, proven in Section \ref{sec-existence}. In its statement, we rely on the Laplace transform of a function. For a precise definition of this notion, we refer to Section \ref{sec-generalth}:

\begin{Theorem}[Limiting distribution for aging branching processes]
\label{th-limitdist-nonstat}
Consider an integrable aging function and  a PA sequence $(f_k)_{k\in\N}$. Denote the corresponding aging birth process by $(N_t)_{t\geq0}$. Then, assuming that $(N_t)_{t\geq0}$ is supercritical and Malthusian, the limiting degree distribution of the branching process $\sub{N}$ defined by the birth process $(N_t)_{t\geq0}$ is given by
	\eqn{
	\label{for-nonstatdist}
	p_k = \frac{\alpha^*}{\alpha^*+f_k\hat{\mathcal{L}}^g(k,\alpha^*)}\prod_{i=0}^{k-1}\frac{f_i\hat{\mathcal{L}}^g(i,\alpha^*)}
																																										{\alpha^*+f_{i}\hat{\mathcal{L}}^g(i,\alpha^*)},
	}
where $\alpha^*$ is the Malthusian parameter of $\sub{N}$. Here, the sequence of coefficients $(\hat{\mathcal{L}}^g(k,\alpha^*))_{k\in\N}$ appearing in \eqref{for-nonstatdist} is given by
	\eqn{
	\label{for-lkratio}
	\hat{\mathcal{L}}^g(k,\alpha^*) = \frac{\mathcal{L}(\pr\left(N_\cdot=k\right)g(\cdot))(\alpha^*)}
	{\mathcal{L}(\pr\left(N_\cdot=k\right))(\alpha^*)},
	}
where, for $h\colon \R^+\rightarrow\R$, $\mathcal{L}(h(\cdot))(\alpha)$ denotes the Laplace transform of $h$.\\
Further, considering a fixed individual in the branching population, the total number of children in its entire lifetime is distributed as $V_{G(\infty)}$, where $G(\infty)$ is the $L^1$-norm of $g$. 
\end{Theorem}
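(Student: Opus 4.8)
The plan is to derive \eqref{for-nonstatdist} by computing the Laplace transform of $k\mapsto\pr(N_t=k)$ directly from the Kolmogorov forward equations of the aging birth process, and then reading off $p_k$ from the general $\CTBP$ identity $p_k=\E\left[\pr\left(N_u=k\right)_{u=T_{\alpha^*}}\right]$ furnished by Theorem \ref{th-expogrowth}, which applies because $\sub{N}$ is assumed supercritical and Malthusian. Write $q_k(t):=\pr(N_t=k)$. Since $N_t$ has the same law as $V_{G(t)}$ with $G(t)=\int_0^tg(s)\,ds$, and $(V_t)_{t\ge0}$ is the pure-birth chain with rates $(f_k)_{k\in\N}$, the chain rule applied to the master equations of $(V_t)_{t\ge0}$ (or a direct computation from Definition \ref{def-nonstatbirth}) shows that
\[
\frac{d}{dt}q_0(t)=-f_0\,g(t)\,q_0(t),\qquad \frac{d}{dt}q_k(t)=f_{k-1}\,g(t)\,q_{k-1}(t)-f_k\,g(t)\,q_k(t)\quad(k\ge1),
\]
with $q_0(0)=1$ and $q_k(0)=0$ for $k\ge1$.

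First I would Laplace-transform these equations. For $\alpha>0$ set $\hat q_k(\alpha):=\mathcal{L}(q_k)(\alpha)$, which is finite since $0\le q_k\le1$, and note that $\mathcal{L}(g\,q_k)(\alpha)\le\mathcal{L}(g)(\alpha)\le G(\infty)<\infty$ by integrability of $g$. Since $q_k\ge0$ and $q_k(t)>0$ for all large $t$ (because $\pr(V_s=k)>0$ for $s>0$ and $G(t)\uparrow G(\infty)>0$), we have $\hat q_k(\alpha)>0$, so the coefficient $\hat{\mathcal{L}}^g(k,\alpha)=\mathcal{L}(g\,q_k)(\alpha)/\hat q_k(\alpha)$ in \eqref{for-lkratio} is a well-defined finite quantity. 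Using $\mathcal{L}(q_k')(\alpha)=\alpha\hat q_k(\alpha)-q_k(0)$, the transformed equations become
\[
\bigl(\alpha+f_0\hat{\mathcal{L}}^g(0,\alpha)\bigr)\hat q_0(\alpha)=1,\qquad \bigl(\alpha+f_k\hat{\mathcal{L}}^g(k,\alpha)\bigr)\hat q_k(\alpha)=f_{k-1}\hat{\mathcal{L}}^g(k-1,\alpha)\,\hat q_{k-1}(\alpha)\quad(k\ge1).
\]
Solving this linear recursion gives $\hat q_k(\alpha)=\bigl(\alpha+f_k\hat{\mathcal{L}}^g(k,\alpha)\bigr)^{-1}\prod_{i=0}^{k-1}\frac{f_i\hat{\mathcal{L}}^g(i,\alpha)}{\alpha+f_i\hat{\mathcal{L}}^g(i,\alpha)}$, and since $T_{\alpha^*}$ has density $\alpha^*\e^{-\alpha^*t}$, Theorem \ref{th-expogrowth} yields $p_k=\E\left[\pr\left(N_u=k\right)_{u=T_{\alpha^*}}\right]=\alpha^*\hat q_k(\alpha^*)$, which is exactly \eqref{for-nonstatdist}; as a consistency check, $\sum_kp_k=\alpha^*\mathcal{L}(\sum_kq_k)(\alpha^*)=1$ once non-explosiveness of $\sub{N}$ is known.

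For the final assertion, note that $t\mapsto N_t$ is non-decreasing and $\N$-valued, so $N_\infty:=\lim_{t\to\infty}N_t$ exists in $\N\cup\{\infty\}$. Realizing $(N_t)_{t\ge0}$ on the same space as $(V_s)_{s\ge0}$ via $N_t=V_{G(t)}$ and using $G(t)\uparrow G(\infty)<\infty$ together with the fact that a.s.\ $(V_s)_{s\ge0}$ has no jump at the deterministic time $s=G(\infty)$ (the jump times of $(V_s)_{s\ge0}$ are absolutely continuous), one gets $N_\infty=\lim_{t\to\infty}V_{G(t)}=V_{G(\infty)}$ almost surely, hence in distribution; integrability of $g$ makes $\E[V_{G(\infty)}]$ finite, which in particular supplies the non-explosiveness used above. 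I expect the only genuinely delicate points to be the strict positivity of $\hat q_k(\alpha^*)$ — this is what makes the ratio in \eqref{for-lkratio} a legitimate object — and the almost-sure passage to the limit $V_{G(t)}\to V_{G(\infty)}$, where one must rule out the null event of a jump of $(V_s)_{s\ge0}$ landing exactly at $s=G(\infty)$; the remainder is routine manipulation of the recursion.
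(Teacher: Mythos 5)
Your proposal is correct and follows essentially the same route as the paper: the paper obtains the identical recursion $\bigl(\alpha^*+f_k\hat{\mathcal{L}}^g(k,\alpha^*)\bigr)\hat q_k(\alpha^*)=f_{k-1}\hat{\mathcal{L}}^g(k-1,\alpha^*)\,\hat q_{k-1}(\alpha^*)$ by integrating $\E\left[P_k[V](G(T_{\alpha^*}))\right]$ by parts — which is exactly your Laplace transform of the forward Kolmogorov equations — and proves the general fitness version (Theorem \ref{th-degagefit}) first, recovering this statement at $Y\equiv 1$. Your explicit checks of the strict positivity of $\hat q_k(\alpha^*)$ and of the absence of a jump of $(V_s)_{s\geq0}$ exactly at $s=G(\infty)$ are welcome details that the paper leaves implicit.
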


%\begin{wrapfigure}{l}{0.4\textwidth}
%\vspace{-0.5cm}
%\centering
%	\includegraphics[width=0.35\textwidth]{Pictures/figure_stat-nonstat-inversepower_2-eps-converted-to} \\
%	\includegraphics[width=0.35\textwidth]{Pictures/figure_stat-nonstat-exp_2-eps-converted-to} 
%\caption{{\small Examples of stationary and aging limit degree distributions}}
%\label{fig-distr-agepower}
%\end{wrapfigure}

The limiting degree distribution maintains a product structure as in the stationary case (see \eqref{deg-distr-PA-tree} for comparison). Unfortunately, the analytic expression for the probability distribution $(p_k)_{k\in\N}$ in \eqref{for-nonstatdist} given by the previous theorem is not explicit. In the stationary case, the form reduces to the simple expression in \eqref{deg-distr-PA-tree}.

\begin{figure}[t]
		\centering
		\includegraphics[width = 0.35\textwidth]{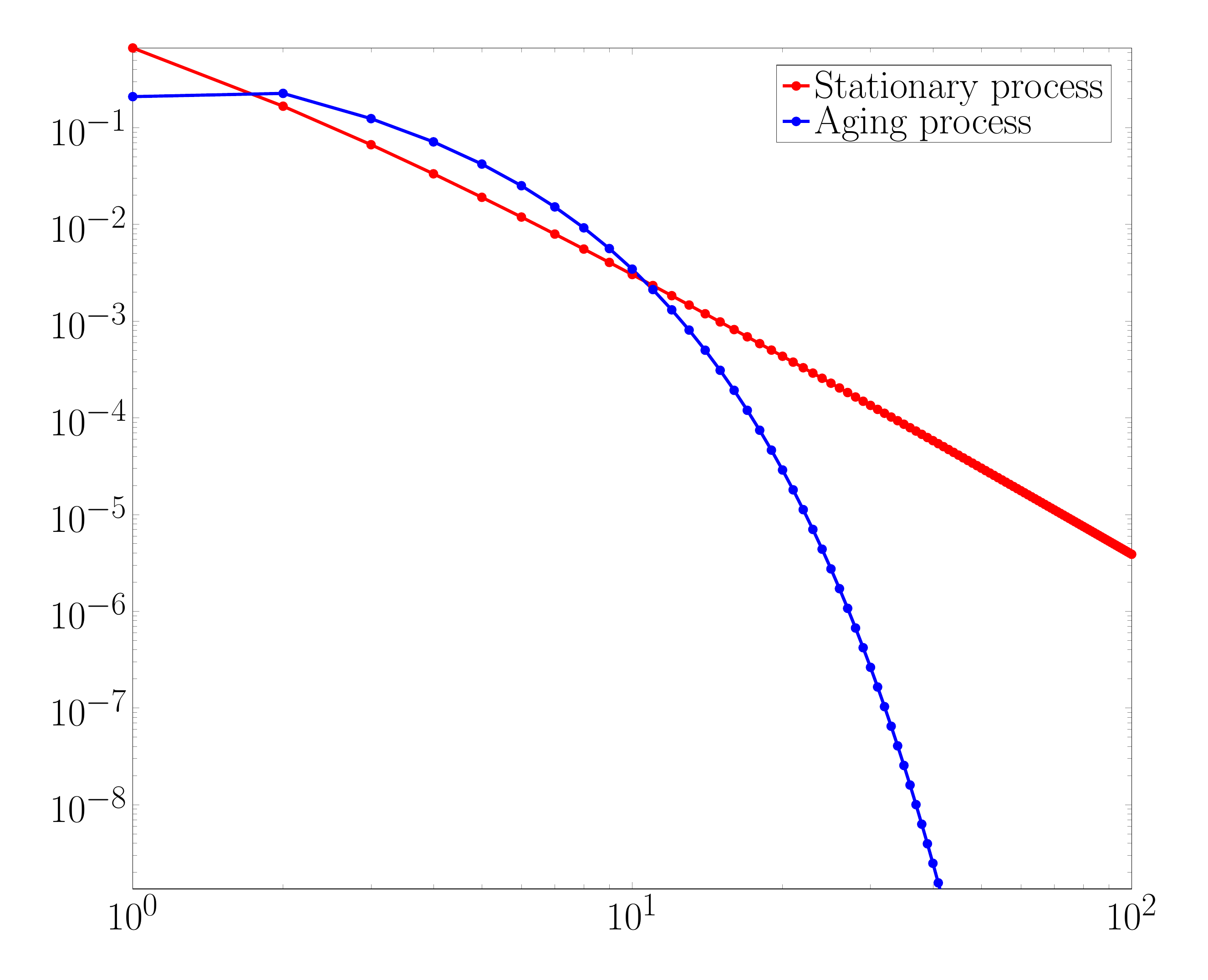}
		\includegraphics[width = 0.35\textwidth]{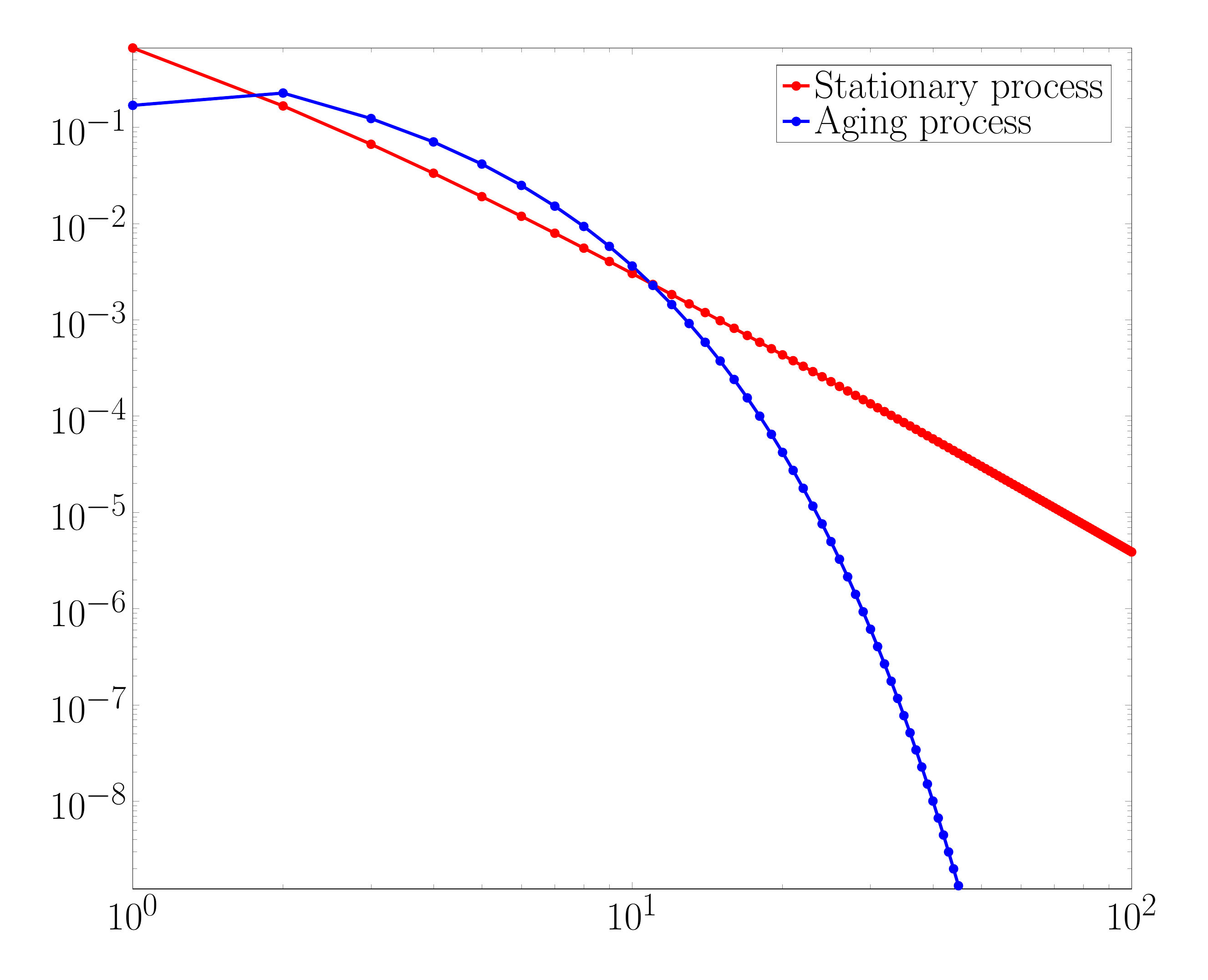}
\caption{ Examples of stationary and aging limit degree distributions}
\label{fig-distr-agepower}
\end{figure}

In general, the asymptotics of the coefficients $(\hat{\mathcal{L}}^g(k,\alpha^*))_{k\in\N}$ is unclear, since it depends both on the aging function $g$ as well as the PA weight sequence $(f_k)_{k\in\N}$ itself in an intricate way. In particular, we have no explicit expression for the ratio in \eqref{for-lkratio}, except in special cases. In this type of birth process, the cumulative advantage given by $(f_k)_{k\in\N}$ and the aging effect given by $g$ cannot be separated from each other. 

Numerical examples in Figure \ref{fig-distr-agepower} show how  aging destroys the power-law degree distribution. In each of the two plots, the limiting degree distribution of a stationary process with affine PA weights  gives a power-law degree distribution, while the process with two different integrable aging functions does not. 
In the examples we have used $g(t) = \e^{-\lambda t}$ and $g(t) = (1+t)^{-\lambda}$ for some $\lambda>1$, and we observe the insensitivity of the limiting degree distribution with respect to $g$. The distribution given by \eqref{for-nonstatdist} can be seen as the limiting degree distribution of a CTBP defined by preferential attachment weight $(f_k\hat{\mathcal{L}}^g(k,\alpha^*))_{k\in\N}$. This suggests that $f_k\hat{\mathcal{L}}^g(k,\alpha^*)$ is not asymptotically linear in $k$. 

In Section \ref{sec-examples-age}, we investigate the two examples in Figure \ref{fig-distr-agepower}, showing that the limiting degree distribution has exponential tails, a fact that we know in general just as an upper bound (see Lemma \ref{lem-exp-tails-aging-bd-fitness}).
\medskip

In order to apply the general CTBP result in Theorem \ref{th-expogrowth} below, we need to prove that an aging process $(N_t)_{t\geq0}$ is supercritical and Malthusian. We show in Section \ref{sec-existence} that, for an integrable aging function $g$, the corresponding process is supercritical if and only if
	\eqn{
	\label{cond-eg_inf1}
	\lim_{t\rightarrow\infty}\E\left[V_{G(t)}\right] = \E\left[V_{G(\infty)}\right]>1.
	}
Condition \eqref{cond-eg_inf1} heuristically suggests that the process $(N_t)_{t\geq0}$ has a Malthusian parameter if and only if the expected number of children in the entire lifetime of a fixed individual is larger than one, which seems quite reasonable. In particular, such a result follows from the fact that if $g$ is integrable, then the Laplace transform is always finite for every $\alpha>0$. In other words, since $N_{T_{\alpha^*}}$ has the same distribution as $V_{G(T_{\alpha^*})}$, $\E[N_{T_{\alpha^*}}]$ is always bounded by $\E[V_{G(\infty)}]$. This implies that $G(\infty)$ cannot be too small, as otherwise the Malthusian parameter would not exist, and the CTBP would die out $\pr$-a.s..

The aging effect obviously slows down the birth process, and makes the limiting degree distribution have exponential tails for affine preferential attachment weights. One may wonder whether the power-law degree distribution could be restored when $(f_k)_{k\in\N}$ grows super-linearly instead.  Here, we say that a sequence of weights $(f_k)_{k\in\N}$ grows super-linearly when $\sum_{k\geq1}1/f_k<\infty$ (see Definition \ref{def-superlin}). In the super-linear case, however, the branching process is {\em explosive}, i.e., for every individual the probability of generating an infinite number of children in finite time is $1$. In this situation, the Malthusian parameter does not exist, since the Laplace transform of the process is always infinite. One could ask whether, by using an integrable aging function, this explosive behavior is destroyed. The answer to this question is given by the following theorem:

\begin{Theorem}[Explosive aging branching processes for super-linear attachment weights]
\label{th-explosive}
Consider a stationary process $(V_t)_{t\geq0}$ defined by super-linear PA weights $(f_k)_{k\in\N}$. For any aging function $g$, the corresponding non-stationary process $(N_t)_{t\geq0}$ is explosive.
\end{Theorem}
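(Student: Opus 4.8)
The plan is to exploit the time-change identity recorded right after Definition~\ref{def-nonstatbirth}: one may realise the aging process and the stationary process on a common probability space so that $N_t=V_{G(t)}$ for every $t\ge 0$, where $G(t)=\int_0^t g(s)\,ds$ is continuous and non-decreasing with $G(0)=0$ and limit $G(\infty)\in(0,\infty]$. Hence $N$ reaches $+\infty$ in finite real time exactly when $V$ explodes at a $V$-time not exceeding $G(\infty)$, and the whole problem reduces to controlling the explosion time of the stationary, super-linear process $V$.

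First I would record the explosion time of $V$. Let $0=T^V_0<T^V_1<T^V_2<\cdots$ be the jump times of $V$; the holding time at level $k$ is $\mathrm{Exp}(f_k)$, so $T^V_n=\sum_{k=0}^{n-1}E_k/f_k$ with $(E_k)_{k\ge0}$ i.i.d.\ standard exponentials, and the explosion time is $\tau:=\lim_{n\to\infty}T^V_n=\sum_{k\ge0}E_k/f_k$. Super-linearity, i.e.\ $\sum_{k\ge1}1/f_k<\infty$ together with $f_0>0$, gives $\E[\tau]=\sum_{k\ge0}1/f_k<\infty$, hence $\tau<\infty$ $\pr$-a.s.; consequently $V_s<\infty$ for $s<\tau$ while $V_s=+\infty$ for $s\ge\tau$.

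Next I would transfer this back to $N$. Since $N_t=V_{G(t)}$ and $G$ is continuous and non-decreasing, the first time $N$ hits $+\infty$ is $\sigma:=\inf\{t\ge0:\ G(t)\ge\tau\}$, which is finite if and only if $\tau\le G(\infty)$ (up to a $\pr$-null set). When $g$ is not integrable this is automatic, since then $G(\infty)=\infty$ and $\tau<\infty$ a.s., so $N$ explodes with probability one. For a general aging function it remains to check that $\pr(\tau\le G(\infty))>0$, for which it suffices that $\pr(\tau<\varepsilon)>0$ for every $\varepsilon>0$: splitting $\tau=\sum_{k\le K}E_k/f_k+\sum_{k>K}E_k/f_k$, one picks $K$ so large that the tail has mean $<\varepsilon/4$, whence by Markov's inequality the tail is $<\varepsilon/2$ with probability $>\tfrac12$, while the finite head is $<\varepsilon/2$ with strictly positive probability, and independence of head and tail concludes. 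Thus, with positive probability --- and with probability one when $g\notin L^1(\R^+)$ --- the process $N$ reaches $+\infty$ in finite time; equivalently, the lifetime offspring $N_\infty\stackrel{d}{=}V_{G(\infty)}$ satisfies $\pr(V_{G(\infty)}=\infty)>0$. Since the individuals of the CTBP reproduce according to i.i.d.\ copies of $N$, the branching process is explosive.

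The proof is short once the time-change identity is available, and the only point that needs a little care is the accounting at the boundary of the time change: translating ``$N$ explodes in finite real time'' into the clean event $\{\tau\le G(\infty)\}$, and --- if one wants the literal almost-sure reading of explosiveness --- observing that this is unconditional precisely when $g$ is not integrable, whereas for integrable $g$ one obtains instead a positive (generally not full) atom of the offspring law $V_{G(\infty)}$ at $+\infty$, which already precludes a well-defined supercritical Malthusian CTBP and is the relevant notion of explosiveness in that regime. Everything else is soft: the explicit law of $\tau$, and the elementary observation that $\sum_{k\ge1}1/f_k<\infty$ forces $\E[\tau]<\infty$ and hence $\tau<\infty$ a.s.
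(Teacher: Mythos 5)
Your proof is correct and follows essentially the same route as the paper's: both reduce the claim to the time-change identity $N_t=V_{G(t)}$ together with the explosion of the super-linear stationary process, so that $\pr(N_t=\infty)=\pr(V_{G(t)}=\infty)>0$ for every $t>0$, which rules out a Malthusian parameter. The only difference is that you supply the details the paper leaves implicit, namely that $\E[\tau]=\sum_{k\ge0}1/f_k<\infty$ forces $\tau<\infty$ a.s.\ and that $\pr(\tau<\varepsilon)>0$ for every $\varepsilon>0$, which is exactly what is needed for a strictly positive explosion probability when $G(\infty)<\infty$.
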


The proof of Theorem \ref{th-explosive} is rather simple, and is given in Section \ref{sec-aging-gen-PA}. 
We investigate the case of affine PA weights $f_k = ak+b$ in more detail in Section \ref{sec-adaptedLap}. Under a hypothesis on the regularity of the integrable aging function, in Proposition \ref{prop-pkage_asym}, we give the asymptotic behavior of the corresponding limiting degree distribution. In particular, as $k\rightarrow\infty$, 
$$
	p_k = C_1\frac{\Gamma(k+b/a)}{\Gamma(k+1)}\e^{-C_2k}\mathcal{G}(k,g)(1+o(1)),
$$
for some positive constants $C_1,C_2$. The term $\mathcal{G}(k,g)$ is a function of $k$, the aging function $g$ and its derivative. The precise behavior of such term depends crucially on the aging function. Apart from this, we notice that aging generates an exponential term in the distribution, which explains the two examples in Figure \ref{fig-distr-agepower}. In Section \ref{sec-examples-age}, we prove that the two limiting degree distributions in Figure \ref{fig-distr-agepower} indeed have exponential tails.

\subsection{Results with aging and fitness}
\label{sec-res-aging-fitness} 
The analysis of birth processes becomes harder when we also consider fitness. First of all, we define the birth process with aging and fitness as follows:

\begin{Definition}[Aging birth process with fitness]
\label{def-nonstatfit}
Consider a birth process $(V_t)_{t\geq0}$. Let $g\colon \R^+\rightarrow\R^+$ be an aging function, and $Y$ a positive random variable. The process
$M_t := V_{YG(t)}$
is called a birth process with {\em aging and fitness}.
\end{Definition}

Definition \ref{def-nonstatfit} implies that the infinitesimal jump rates of the process $(M_t)_{t\geq0}$ are as in \eqref{for-heuristic}, so that the birth probabilities of an individual depend on the PA weights, the age of the individual and on its fitness. Assuming that the process $(M_t)_{t\geq0}$ is supercritical and Malthusian, we can prove the following theorem:

\begin{Theorem}[Limiting degree distribution for aging and fitness]
\label{th-degagefit}
Consider a process $(M_t)_{t\geq0}$ with integrable aging function $g$, fitnesses that are i.i.d. across the population, and assume that it is supercritical and Malthusian with Malthusian parameter $\alpha^*$. Then, the limiting degree distribution for the corresponding branching process is given by
	$$
	p_k = \E\left[\frac{\alpha^*}{\alpha^*+f_kY\hat{\mathcal{L}}(k,\alpha^*,Y)}\prod_{i=0}^{k-1}
			\frac{f_{i}Y\hat{\mathcal{L}}(i,\alpha^*,Y)}{\alpha^*+f_iY\hat{\mathcal{L}}(i,\alpha^*,Y)}\right].
	$$
For a fixed individual, the distribution $(q_k)_{k\in\N}$ of the number of children it generates over its entire lifetime is given by
	$$
	q_k = \pr\left(V_{YG(\infty)}=k\right).
	$$
\end{Theorem}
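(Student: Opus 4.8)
The plan is to condition on the fitness $Y$ and reduce the statement to the aging-without-fitness computation behind Theorem~\ref{th-limitdist-nonstat}, being careful that the Laplace parameter that appears is the \emph{global} Malthusian parameter $\alpha^*$ of $\sub{M}$ rather than the Malthusian parameter of the conditional process. First I would invoke Theorem~\ref{th-expogrowth} for the branching process $\sub{M}$, which by hypothesis is supercritical and Malthusian with parameter $\alpha^*$: this gives $p_k = \E\big[\pr(M_{T_{\alpha^*}}=k)\big]$, where $T_{\alpha^*}$ is an independent $\mathrm{Exp}(\alpha^*)$ random variable and the probability averages over the single-individual birth process $(M_t)_{t\ge0}=(V_{YG(t)})_{t\ge0}$, i.e.\ over both $Y$ and $(V_t)_{t\ge0}$. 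Since $T_{\alpha^*}$ is independent of $\big(Y,(V_t)_{t\ge0}\big)$, conditioning on $Y$ yields $p_k = \E\big[\,\pr(V_{YG(T_{\alpha^*})}=k\mid Y)\,\big]$, where the outer expectation is over $Y$.

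Next, fix $Y=y$. Then $t\mapsto M_t = V_{yG(t)}$ is an aging birth process in the sense of Definition~\ref{def-nonstatbirth} with PA sequence $(f_k)_{k\in\N}$ and integrable aging function $g_0 := y\,g$ (indeed $yG(t) = \int_0^t y\,g(s)\,ds$). I would then re-run the computation that underlies Theorem~\ref{th-limitdist-nonstat} conditionally: writing $u_k(t) := \pr(M_t=k\mid Y=y)$, the Kolmogorov forward equations $u_k'(t) = f_{k-1}g_0(t)u_{k-1}(t) - f_k g_0(t)u_k(t)$ (with $u_0(t)=\e^{-f_0\int_0^t g_0}$ and $u_k(0)=\I\{k=0\}$), after taking the Laplace transform at $\alpha^*$ and inserting \eqref{for-lkratio}, become the recursion $\big(\alpha^*+f_k\hat{\La}^{g_0}(k,\alpha^*)\big)\La(u_k)(\alpha^*) = f_{k-1}\hat{\La}^{g_0}(k-1,\alpha^*)\La(u_{k-1})(\alpha^*)$ for $k\ge1$ together with $\big(\alpha^*+f_0\hat{\La}^{g_0}(0,\alpha^*)\big)\La(u_0)(\alpha^*)=1$; all these transforms are finite for every $\alpha^*>0$ precisely because $g$, hence $g_0$, is integrable. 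Since $\pr(M_{T_{\alpha^*}}=k\mid Y=y)=\alpha^*\La(u_k)(\alpha^*)$, solving the recursion gives
\[
  \pr(M_{T_{\alpha^*}}=k\mid Y=y) = \frac{\alpha^*}{\alpha^*+f_k\hat{\La}^{yg}(k,\alpha^*)}\prod_{i=0}^{k-1}\frac{f_i\hat{\La}^{yg}(i,\alpha^*)}{\alpha^*+f_i\hat{\La}^{yg}(i,\alpha^*)},
\]
which is exactly formula \eqref{for-nonstatdist}, except evaluated at the \emph{given} value $\alpha^*$ instead of at the conditional Malthusian parameter.

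To finish, I would use that the Laplace transform is linear in the integrand, so $\hat{\La}^{yg}(k,\alpha^*) = y\,\hat{\La}(k,\alpha^*,y)$, where $\hat{\La}(k,\alpha^*,y)$ is defined as in \eqref{for-lkratio} but with the bare aging function $g$ in the numerator; substituting this and integrating over $Y$ (legitimate since the integrand lies in $[0,1]$) gives the claimed expression for $p_k$. For the lifetime offspring, the total number of children of a fixed individual is $M_\infty = \lim_{t\to\infty}V_{YG(t)}$; by integrability $G(\infty)<\infty$, hence $YG(\infty)<\infty$ almost surely, and the Malthusian hypothesis rules out super-linear weights (otherwise $\sub{M}$ would be explosive by Theorem~\ref{th-explosive} and therefore not Malthusian), so $(V_s)_{s\ge0}$ is non-explosive and $V_{YG(\infty)}<\infty$ a.s. Since $G$ is continuous and increasing and $YG(\infty)$ is a.s.\ not a jump time of $V$, we get $M_\infty = V_{YG(\infty)}$ a.s., i.e.\ $q_k = \pr(V_{YG(\infty)}=k)$.

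The main obstacle is exactly the point flagged above: one cannot cite Theorem~\ref{th-limitdist-nonstat} conditionally on $Y$ as a black box, because the Laplace parameter in that theorem is the Malthusian parameter of the conditional process, which is generally different from the $\alpha^*$ produced by Theorem~\ref{th-expogrowth} for the full process $\sub{M}$. The way around it — and the conceptual heart of the argument — is the observation that the Laplace-transformed forward recursion, and hence the product formula for $\E[\pr(N_{T_\alpha}=k)]$, holds at \emph{every} $\alpha>0$ for which the relevant transforms are finite, and integrability of $g$ guarantees finiteness for all such $\alpha^*>0$. Everything else (the forward equations, the linearity of $\La$, and the final application of Fubini with a bounded integrand) is routine.
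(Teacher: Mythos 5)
Your proposal is correct and follows essentially the same route as the paper: condition on $Y$, derive the Laplace-transformed recursion for $\pr(V_{yG(T_{\alpha^*})}=k)$ at the global parameter $\alpha^*$ (your Kolmogorov-forward-equation step is the paper's integration by parts), solve the product recursion, use $\hat{\mathcal{L}}^{yg}(k,\alpha^*)=y\,\hat{\mathcal{L}}(k,\alpha^*,y)$, and average over $Y$; the lifetime-offspring claim is the same monotone-limit argument via \eqref{for-limitG}. Your explicit remark that the recursion is valid at any $\alpha>0$ where the transforms are finite (rather than only at the conditional Malthusian parameter) is a point the paper leaves implicit, and it is handled correctly.
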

Similarly to Theorem \ref{th-limitdist-nonstat}, the sequence $(\hat{\mathcal{L}}(k,\alpha^*,Y))_{k\in\N}$ is given by
	$$
	\hat{\mathcal{L}}(k,\alpha^*,Y) = \left(\frac{\mathcal{L}(\pr\left(V_{uG(\cdot)}=k\right)g(\cdot))(\alpha^*)}
	{\mathcal{L}(\pr\left(V_{uG(\cdot)}=k\right))(\alpha^*)}\right)_{u=Y},
	$$
where again $\mathcal{L}(h(\cdot))(\alpha)$ denotes the Laplace transform of a function $h$. Notice that in this case, with the presence of the fitness $Y$, this sequence is no longer deterministic but random instead. We still have the product structure for $(p_k)_{k\in\N}$ as in the stationary case, but now we have to average over the fitness distribution.

We point out that Theorem \ref{th-limitdist-nonstat} is a particular case of Theorem \ref{th-degagefit}, when we consider $Y\equiv 1$. We state the two results as separate theorems to improve the logic of the presentation. We prove Theorem \ref{th-degagefit} in Section \ref{sec-pf-aging-fitness-gen}. In Section \ref{sec-aging-gen-PA} we show how Theorem \ref{th-limitdist-nonstat} can be obtained from Theorem  \ref{th-degagefit}, and in particular how Condition \eqref{cond-eg_inf1} is obtained from the analogous Condition \eqref{cond-eg_inf2} stated below for general fitness distributions.

With affine PA weights, in Proposition \ref{prop-pkasym_fitage}, we can identify the asymptotics of the limiting degree distribution we obtain. This is proved by similar techniques as in the case of aging only, even though the result cannot be expressed so easily. In particular, we prove
	$$
	p_k =\frac{\Gamma(k+b/a)}{\Gamma(b/a)\Gamma(k+1)}\frac{2\pi}{\sqrt{\mathrm{det}(kH_k(t_k,s_k))}}
	\e^{-k\Psi_k(t_k,s_k)}\pr\left(\mathcal{N}_1\geq -t_k,\mathcal{N}_2\geq -s_k\right)(1+o(1)),
	$$ 
where the function $\Psi_k(t,s)$ depends on the aging function, the density $\mu$ of the fitness and $k$. The point $(t_k,s_k)$ is the absolute minimum of $\Psi_k(t,s)$, $H_k(t,s)$ is the Hessian matrix of $\Psi_k(t,s)$, and $(\mathcal{N}_1,\mathcal{N}_2)$ is a bivariate normal vector with covariance matrix related to $H_k(t,s)$. We do not know the necessary and sufficient conditions for the existence of such a minimum $(t_k,s_k)$. However, in Section \ref{sec-fitexamples}, we consider two examples where we can apply this result, and we show that it is possible to obtain power-laws for them.

In the case of aging and fitness, the supercriticality condition in \eqref{cond-eg_inf1} is replaced by the analogous condition that 
	\eqn{
	\label{cond-eg_inf2}
	\E\left[V_{YG(t)}\right]<\infty \quad\mbox{for every }t\geq0 \quad \quad \mbox{and}
	\quad \lim_{t\rightarrow\infty}\E\left[V_{YG(t)}\right]>1.
	}

Borgs et al.\ \cite{Borgs} and Dereich \cite{der16}, \cite{der2014} prove results on stationary CTBPs with fitness. In these works, the authors investigate models with affine dependence on the degree and bounded fitness distributions. This is necessary since unbounded distributions with affine weights are explosive and thus {\em do not have Malthusian parameter}. We refer to Section \ref{sec-fitconditions} for a more precise discussion of the conditions on fitness distributions. 

In the case of integrable aging and fitness, it is possible to consider affine PA weights, even with unbounded fitness distributions, as exemplified by \eqref{cond-eg_inf2}. In particular, for $f_k = ak+b$,
$$
	\E[V_t] = \frac{b}{a}\left(\e^{at}-1\right).
$$
As a consequence, Condition \eqref{cond-eg_inf2} can be written as
	\eqn{
	\label{for-AgeFitLap}
	\forall t\geq0 \quad \E\left[\e^{aYG(t)}\right]<\infty \quad\quad \mbox{and}\quad\quad \lim_{t\rightarrow\infty}\E\left[\e^{aYG(t)}\right]>1+		\frac ab.
	}
The expected value $\E\left[\e^{aYG(t)}\right]$ is the moment generating function of $Y$ evaluated in $aG(t)$. In particular, a necessary condition to have a Malthusian parameter is that the moment generating function is finite on the interval $[0,aG(\infty))$. As a consequence, denoting $\E[\e^{sY}]$ by $\varphi_Y(s)$, we have effectively moved from the condition of having bounded distributions to the condition
\eqn{
\label{for-varpYcond}
	\varphi_Y(x)<+\infty \quad 
			\mbox{on}\quad [0,aG(\infty)),\quad\quad \mbox{and}\quad \lim_{x\rightarrow aG(\infty)}\varphi_Y(x)>\frac{a+b}{a}.
}
Condition \eqref{for-varpYcond} is weaker than assuming a bounded distribution for the fitness $Y$, which means we can consider a larger class of distributions for the aging and fitness birth processes. Particularly for citation networks, it seems reasonable to have unbounded fitnesses, as the relative popularity of papers varies substantially.

\subsection{Dynamical power-laws for exponential fitness and integrable aging}
\label{sec-res-aging-fitness-exp}
In Section \ref{sec-fitexamples} we introduce three different classes of fitness distributions, for which we give the asymptotics for the limiting degree distribution of the corresponding $\CTBP$. 

The first class is called {\em heavy-tailed}. Recalling \eqref{for-varpYcond}, any distribution $Y$ in this class satisfies, for any $t>0$, 
\eqn{
	\label{def-powerlawfit}
		\varphi_Y(t) = \E\left[\e^{tY}\right] = +\infty.
}
These distributions have a tail that is thicker than exponential. For instance, power-law distributions belong to this first class. Similarly to unbounded distributions in the stationary regime, such distributions generate {\em explosive} birth processes, independent of the choice of the integrable aging functions.

The second class is called {\em sub-exponential}.  The density $\mu$ of a distribution $Y$ in this class satisfies 
\eqn{
	\label{def-subexpfitness}
		\forall ~\beta>0, \quad \quad \lim_{s\rightarrow+\infty}\mu(s)\e^{\beta s}=0.
}
An example of this class is the density $\mu(s) = C\e^{-\theta s^{1+\varepsilon}}$, for some $\varepsilon,C,\theta>0$. For such density, we show in Proposition \ref{prop-subexpfit} that the corresponding limiting degree distribution  has a thinner tail than a power-law. 

The third class is called {\em general-exponential}. The density $\mu$ of a distribution $Y$ in this class is of the form
\eqn{
\label{def-generalexpfit}
		\mu(s) = Ch(s)\e^{-\theta s},
}
where $h(s)$ is a twice differentiable function such that $h'(s)/h(s)\rightarrow0$ and $h''(s)/h(s)\rightarrow0$ as $s\rightarrow\infty$, and $C$ is a normalization constant. For instance, exponential and Gamma distributions belong to this class. From \eqref{for-varpYcond}, we know that in order to obtain a non-explosive process, it is necessary to consider the exponential rate $\theta>aG(\infty)$. We will see that the limiting degree distribution obeys a power law as $\theta>aG(\infty)$ with tails becoming thinner when $\theta$ increases.

For a distribution in the general exponential class, as proven in Proposition \ref{prop-expfit_general}, the limiting degree distribution of the corresponding $\CTBP$ has a power-law term, with slowly-varying corrections given by the aging function $g$ and the function $h$.  We do not state Propositions \ref{prop-expfit_general} and \ref{prop-subexpfit} here, as these need notation and results from Section \ref{sec-adaptedLap}. For this reason, we only state the result for the special case of purely exponential fitness distribution:

\begin{Corollary}[Exponential fitness distribution]
\label{th-degexpfitness}
Let the fitness distribution $Y$ be exponentially distributed with parameter $\theta$, and let $g$ be an integrable aging function. Assume that the corresponding birth process $(M_t)_{t\geq0}$ is supercritical and Malthusian. Then, the limiting degree distribution $(p_k)_{k\in\N}$ of the corresponding CTBP $\sub{M}$ is
$$
	p_k = \E\left[\frac{\theta}{\theta+f_kG(T_{\alpha^*})}\prod_{i=0}^{k-1}\frac{f_iG(T_{\alpha^*})}{\theta+f_iG(T_{\alpha^*})}\right].
$$
The distribution $(q_k)_{k\in\N}$ of the number of children of a fixed individual in its entire lifetime is given by
$$
	q_k = \frac{\theta}{\theta+G(\infty) f_k}\prod_{i=0}^{k-1}\frac{G(\infty) f_i}{\theta+G(\infty) f_i}.
$$
\end{Corollary}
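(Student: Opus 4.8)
\emph{Proof idea.} The plan is to sidestep the Laplace-transform ratios $\hat{\mathcal L}(k,\alpha^*,Y)$ that make Theorem~\ref{th-degagefit} opaque, by exploiting the special structure of the exponential fitness: multiplying an $\mathrm{Exp}(\theta)$ variable by a deterministic constant again yields an exponential variable, so the $YG(t)$-time-changed process is, conditionally on $G(t)$, just the stationary process $(V_t)_{t\geq0}$ stopped at an independent exponential clock. Concretely, I would start from the general CTBP limiting-degree formula $p_k=\E[\pr(V_u=k)_{u=T_{\alpha^*}}]$ recalled in Section~\ref{sec-generalth} (valid once the driving birth process is supercritical and Malthusian, which is part of the hypothesis of the corollary), applied to the composite birth process $M_t=V_{YG(t)}$ of Definition~\ref{def-nonstatfit}. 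Since the exponential clock $T_{\alpha^*}$ is independent of $(M_t)_{t\geq0}$, it is independent of both $(V_t)_{t\geq0}$ and $Y$, and this formula reads
$p_k=\pr\bigl(V_{YG(T_{\alpha^*})}=k\bigr)$, the probability being over $V$, $Y$ and $T_{\alpha^*}$ jointly.

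Next I would condition on $T_{\alpha^*}=t$, so that $s:=G(t)$ is a fixed positive number, and note that $YG(t)=sY$ is $\mathrm{Exp}(\theta/s)$-distributed and still independent of $(V_u)_{u\geq0}$. Hence $\pr(V_{YG(t)}=k\mid T_{\alpha^*}=t)=\pr(V_{T_\lambda}=k)$ with $\lambda=\theta/s$ and $T_\lambda$ an independent exponential clock. The quantity $\pr(V_{T_\lambda}=k)$ is evaluated exactly as \eqref{deg-distr-PA-tree} was derived: the $i$-th inter-jump time of $V$ is $\mathrm{Exp}(f_i)$, so by the lack-of-memory property $\pr(V_{T_\lambda}\geq j)=\prod_{i=0}^{j-1}f_i/(f_i+\lambda)$ and therefore $\pr(V_{T_\lambda}=k)=\frac{\lambda}{\lambda+f_k}\prod_{i=0}^{k-1}\frac{f_i}{\lambda+f_i}$. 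Substituting $\lambda=\theta/G(t)$ and clearing denominators by multiplying numerator and denominator of each factor by $G(t)$ gives $\pr(V_{YG(t)}=k\mid T_{\alpha^*}=t)=\frac{\theta}{\theta+f_kG(t)}\prod_{i=0}^{k-1}\frac{f_iG(t)}{\theta+f_iG(t)}$; taking expectation over $T_{\alpha^*}$ produces the stated formula for $p_k$. (As a consistency check, this must coincide with Theorem~\ref{th-degagefit}, which then amounts to the identity that, for exponential $Y$, the outer $\E$ over $Y$ in that theorem can be traded for an $\E$ over $T_{\alpha^*}$ with $\theta$ replacing $\alpha^*$ in the denominators.)

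For the lifetime offspring law, Theorem~\ref{th-degagefit} already supplies $q_k=\pr(V_{YG(\infty)}=k)$, with $G(\infty)=\|g\|_1<\infty$ since $g$ is integrable; running the same computation with the deterministic constant $G(\infty)$ in place of $G(T_{\alpha^*})$ and with no outer expectation yields $q_k=\frac{\theta}{\theta+G(\infty)f_k}\prod_{i=0}^{k-1}\frac{G(\infty)f_i}{\theta+G(\infty)f_i}$. I do not expect a genuinely hard step: the corollary is essentially the remark that exponential fitness ``linearizes'' the aging-and-fitness birth process back into an exponentially-stopped stationary process. The only points requiring a little care are (i) confirming that the general CTBP limiting-distribution formula applies to the composite process $M_t=V_{YG(t)}$, which is guaranteed by the standing supercritical-and-Malthusian assumption, and (ii) tracking the independence of $T_{\alpha^*}$ from the pair $(V,Y)$ so that $YG(T_{\alpha^*})$ is indeed exponential conditionally on $T_{\alpha^*}$, and the Fubini-type interchange of the conditioning on $T_{\alpha^*}$ with the expectations over $Y$ and $V$ is justified.
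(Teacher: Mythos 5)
Your proposal is correct and follows essentially the same route as the paper: both exploit that, conditionally on $T_{\alpha^*}=t$, the variable $YG(t)$ is again exponential (with rate $\theta/G(t)$), so $\pr(M_t=k)$ reduces to the stationary process evaluated at an independent exponential clock, giving the product formula of \eqref{deg-distr-PA-tree} with $f_i$ replaced by $f_iG(t)$; averaging over $T_{\alpha^*}$ gives $p_k$ and letting $t\to\infty$ gives $q_k$. The only cosmetic difference is that you derive $\pr(V_{T_\lambda}=k)$ directly from memorylessness for general weights $(f_k)_{k\in\N}$, whereas the paper phrases the same computation as evaluating the Laplace transform of $s\mapsto\pr(V_{sG(t)}=k)$ at $\theta$ and points to the affine formula \eqref{for-Pktstat}.
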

Using exponential fitness makes the computation of the Laplace transform and the limiting degree distribution easier. We refer to Section \ref{sec-expfitness} for the precise proof. In particular, the sequence defined in Corollary \ref{th-degexpfitness} is very similar to the limiting degree distribution of a stationary process with a bounded fitness. Let $(\xi^Y_t)_{t\geq0}$ be a birth process with PA weights $(f_k)_{k\in\N}$ and fitness $Y$ with bounded support.  As proved in \cite[Corollary 2.8]{der2014}, and as we show in Section \ref{sec-fitconditions}, the limiting degree distribution of the corresponding branching process, assuming that $(\xi^Y_t)_{t\geq0}$ is supercritical and Malthusian, has the form
$$
	p_k = \E\left[\frac{\alpha^*}{\alpha^* +Yf_k}\prod_{i=0}^{k-1}\frac{Yf_i}{\alpha^* + Yf_i}\right] = \pr\left(\xi^Y_{T_{\alpha^*}} = k\right).
$$
We notice the similarities with the limiting degree sequence given by Corollary \ref{th-degexpfitness}. When $g$ is integrable, the random variable $G(T_{\alpha^*})$ has bounded support. In particular, we can rewrite the sequence of the Corollary \ref{th-degexpfitness} as
$$
	p_k = \pr\left(\xi^{G(T_{\alpha^*})}_{T_{\theta}}=k\right).
$$
As a consequence, the limiting degree distribution of the process $(M_t)_{t\geq0}$ equals that of a stationary process with fitness $G(T_{\alpha^*})$ and Malthusian parameter $\theta$. 

\begin{figure}[t]
		\centering
		\includegraphics[width = 0.35\textwidth]{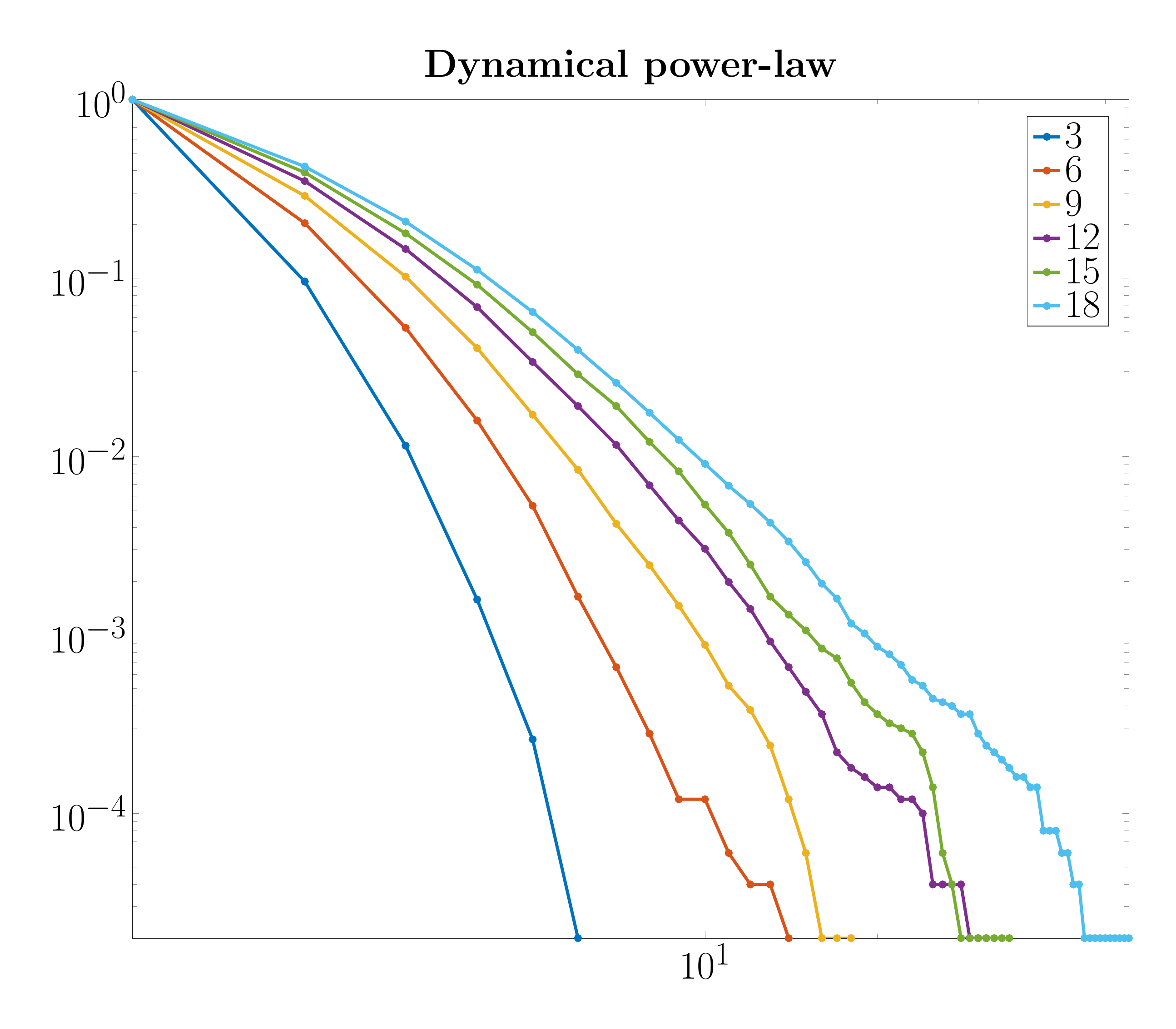}
		\includegraphics[width = 0.35\textwidth]{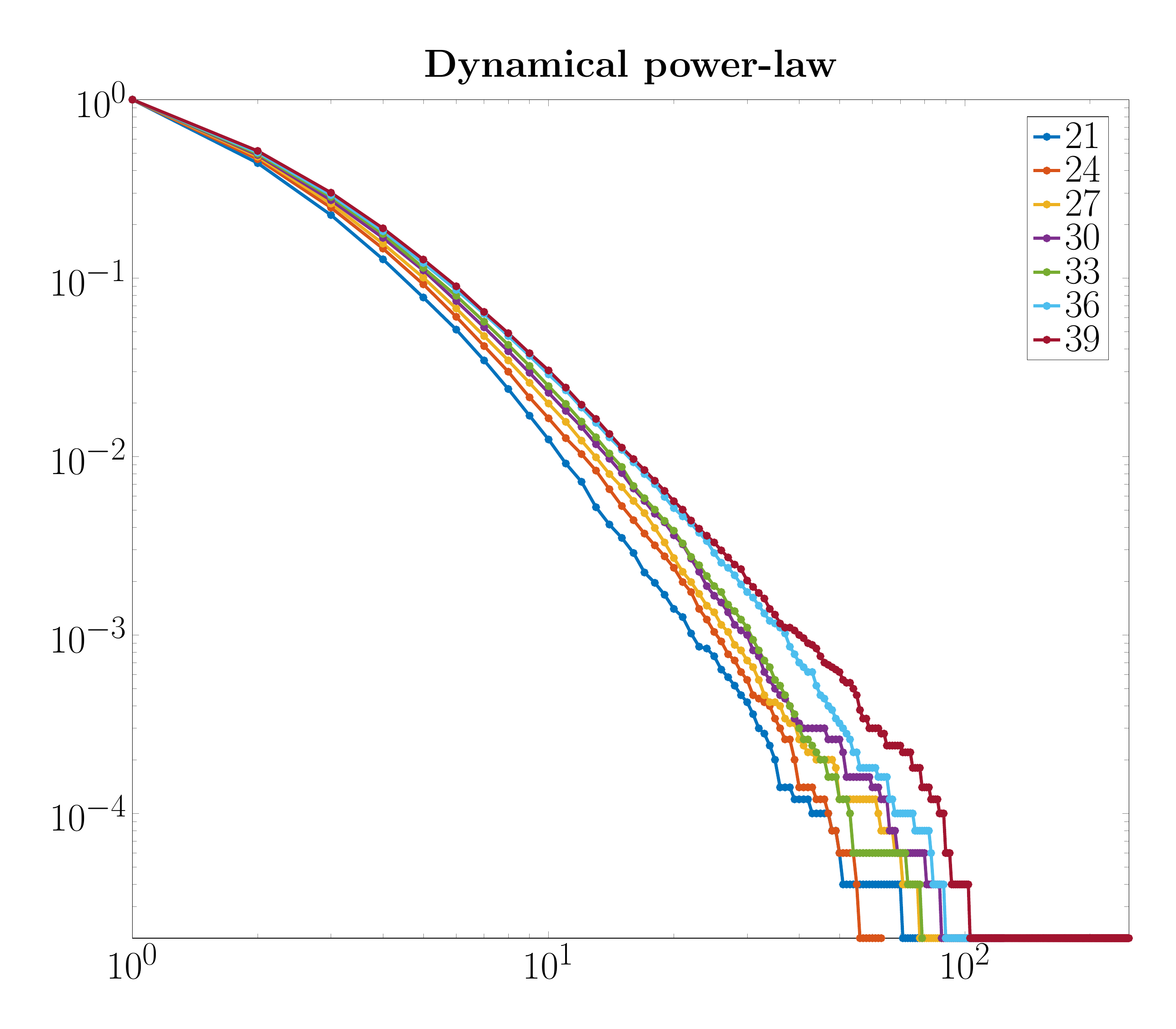}
	\caption{Degree distribution for simulated processes with fitness, aging and affine weights. We considered $a=1$, $b=3.1567$, exponentially distributed fitness with parameter $\theta = 2.3866$, and normalized lognormal aging function. Simulations made at different times show the change of the power-law exponent.}
	\label{fig-pwrlwtime}
\end{figure}

In the case where $Y$ has exponential distribution and the PA weights are affine, we can also investigate the occurrence of {\em dynamical power laws}. In fact, with $(M_t)_{t\geq0}$ such a process, the exponential distribution $Y$ leads to 
\eqn{
\begin{split}
	P_k[M](t) = \pr\left(M_t=k\right) & = \frac{\theta}{\theta+f_kG(t)}\prod_{i=0}^{k-1}\frac{f_iG(t)}{\theta+f_iG(t)}\\
	& = \frac{\theta}{aG(t)}\frac{\Gamma((b+\theta)/(aG(t))}{\Gamma(aG(t))}\frac{\Gamma(k+b/(aG(t)))}{\Gamma(k+b/(aG(t))+ 1+\theta/(aG(t)))}.
	\label{pkt-formula}
\end{split}
}
Here, $M_t$ describes the number of children of an individual of age $t$. 
In other words, $(\pr(M_t=k))_{k\in\N}$ is a distribution such that, as $k\rightarrow\infty$, 
$$
	P_k[M](t) = \pr\left(M_t = k\right) = k^{-(1+\theta/aG(t))}(1+o(1)).
$$
This means that for every time $t\geq0$, the random variable $M_t$ has a power-law distribution with exponent $\tau(t) = 1+\theta/aG(t)>2$. In particular, for every $t\geq0$,  $M_t$ has finite expectation.  We call this behavior where power laws occur that vary with the age of the individuals a {\em dynamical power law}. This occurs not only in the case of pure exponential fitness, but in general for every distribution as in \eqref{def-generalexpfit}, as shown in Proposition \ref{prop-expfit_general} below.

Further, we see that when $t\rightarrow \infty$, the dynamical power-law exponent coincides with the power-law exponent of the entire population.
Indeed, the limiting degree distribution equals
	\eqn{
	\label{for-expfit-pk}
	p_k = \E\left[\theta/(aG(T_{\alpha^*})))\frac{\Gamma(\theta/(aG(T_{\alpha^*}))+b/(aG(T_{\alpha^*}))}{\Gamma(b/(aG(T_{\alpha^*})))}	
	\frac{\Gamma(k+b/(aG(T_{\alpha^*})))}{\Gamma(k+b/(aG(T_{\alpha^*}))+1+\theta/(aG(T_{\alpha^*})))}\right].
	}

In Figure \ref{fig-pwrlwtime}, we show a numerical example of the dynamical power-law for a process with exponential fitness distribution and affine weights. When time increases, the power-law exponent monotonically decreases to the limiting exponent $\tau\equiv \tau(\infty)>2$, which means that the limiting distribution still has finite first moment. Note the similarity to the case of citation networks in Figure \ref{fig-dunamycpowerlaw}.

\begin{figure}[b]
		\centering
		\includegraphics[width = 0.35\textwidth]{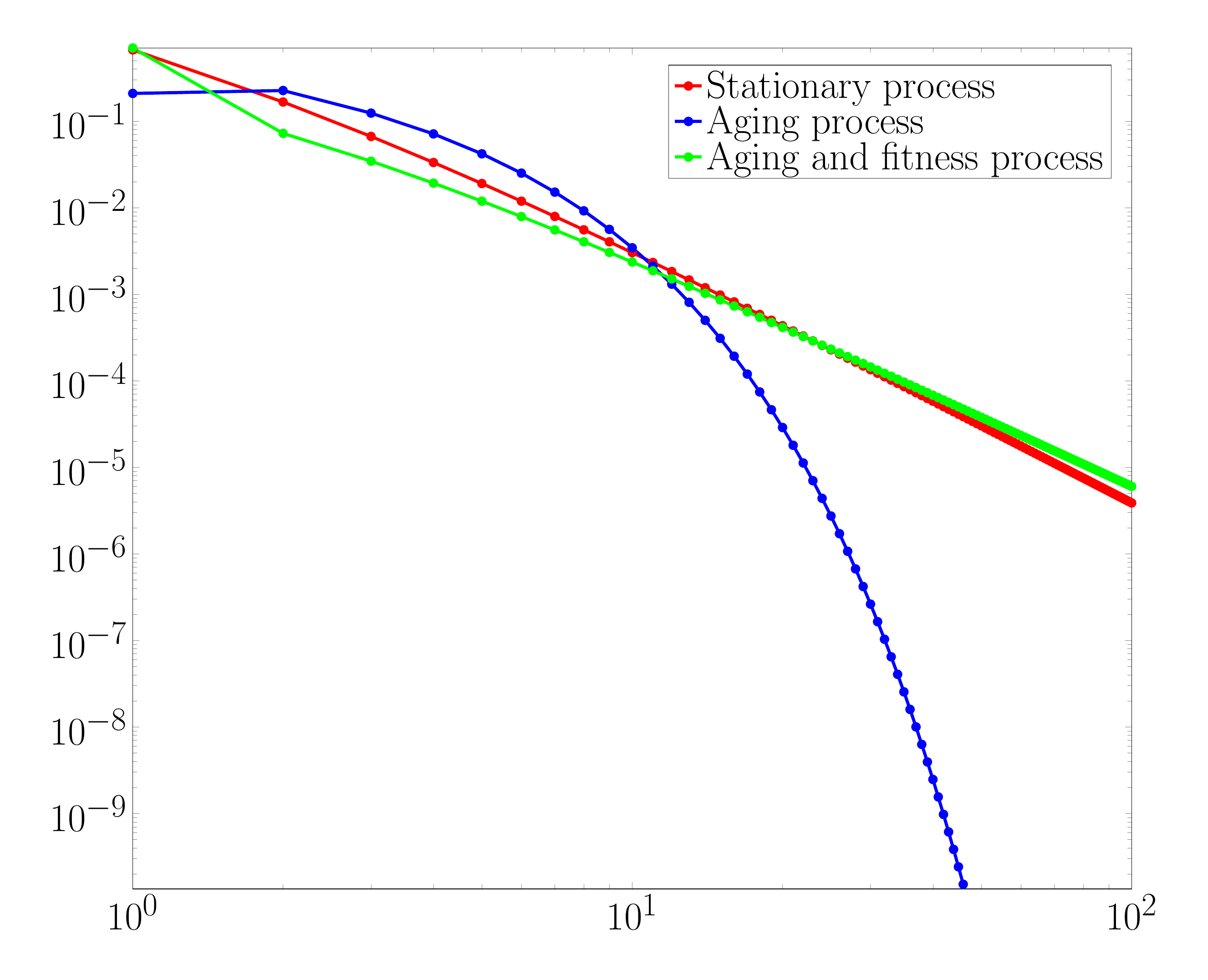}
		\includegraphics[width = 0.35\textwidth]{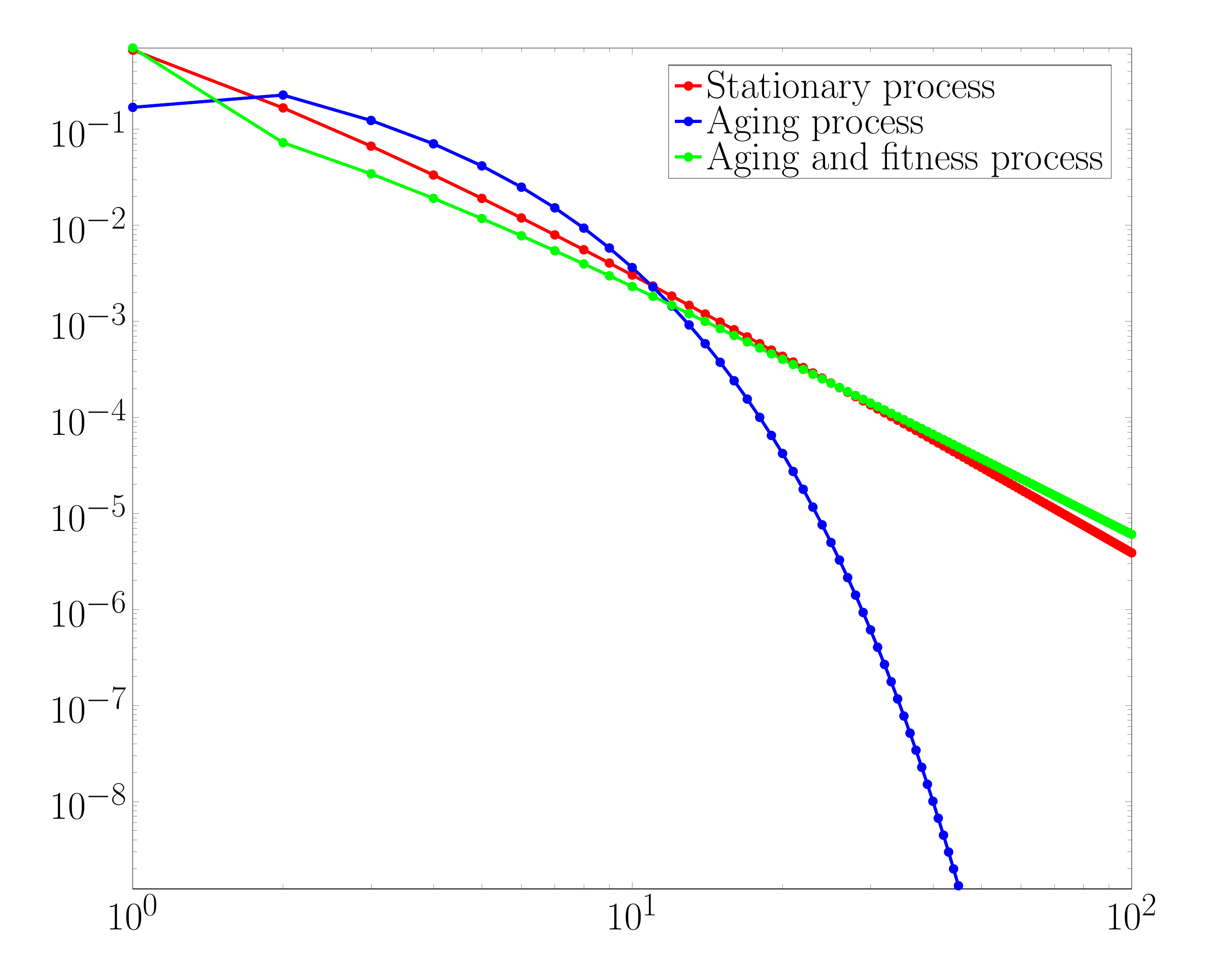}
\caption{Example of limiting degree distribution for branching processes.}
\label{fig-stat-nonstat-fit}
\end{figure}

When $t\rightarrow\infty$, the power-law exponent converges, and also $M_t$ converges in distribution to a limiting random distribution $M_\infty$ given by
	\eqn{
	\label{exp-fitness-entire}
	q_k = \pr\left(M_\infty=k\right) = \frac{\theta}{aG(\infty)}\frac{\Gamma((b+\theta)/(aG(\infty))}{\Gamma(b/(aG(\infty)))}\frac{\Gamma(k+b/(aG(\infty)))}{\Gamma(k+b/(aG(\infty))+ 1+\theta/(aG(\infty)))}.
	}
$M_\infty$ has a power-law distribution, where the power-law exponent is 
$$
	\tau = \lim_{t\rightarrow\infty}\tau(t) = 1+ \theta/(aG(\infty))>2.
$$
In particular, since $\tau>2$, a fixed individual has finite expected number of children also in its entire lifetime, unlike the stationary case with affine weights. In terms of citation networks, this type of processes predicts that papers do not receive an infinite number of citations after they are published (recall Figure \ref{fig-average_degree_increment}).

Figure \ref{fig-distr-agepower} shows the effect of aging on the stationary process with affine weights, where the power-law is lost due to the aging effect. Thus,  aging {\em slows down} the stationary process, and it is not possible to create the amount of high-degree vertices that are present in power-law distributions. Fitness can {\em speed up} the aging process to gain high-degree vertices, so that the power-law distribution is restored. This is shown in Figure \ref{fig-stat-nonstat-fit}, where aging is combined with exponential fitness for the same aging functions as in Figure \ref{fig-distr-agepower}. 

In the stationary case, it is not possible to use unbounded distributions for the fitness to obtain a Malthusian process if the PA weights $(f_k)_{k\in\N}$ are affine. In fact, using unbounded distributions, the expected number of children at exponential time $T_{\alpha}$ is not finite {\em for any} $\alpha>0$, i.e., the branching process is {\em explosive}. The aging effect allows us to relax the condition on the fitness, and the restriction to bounded distributions is relaxed to a condition on its moment generating function.

\subsection{Conclusion and open problems}
\label{sec-struc}

\paragraph{\bf Beyond the tree setting.}
In this paper, we only consider the {\em tree setting}, which is clearly unrealistic for citation networks. However, the analysis of PAMs has  shown that the qualitative features of the degree distribution for PAMs are identical to those in the tree setting. Proving this remains an open problem that we hope to address hereafter. Should this indeed be the case, then we could summarize our findings in the following simple way:
The power-law tail distribution of PAMs is destroyed by integrable aging, and cannot be restored either by super-linear weights or by adding bounded fitnesses. However, it {\em is} restored by {\em unbounded} fitnesses with at most an exponential tail. Part of these results are example based, while we have general results proving that the limiting degree distribution exists.

\paragraph{\bf Structure of the paper.} The present paper is organized as follows. In Section \ref{sec-generalth}, we quote general results on CTBPs, in particular Theorem \ref{th-expogrowth} that we use throughout our proofs. In Section \ref{sec-stat-nonfit}, we describe known properties of the stationary regime. In Section \ref{sec-fitconditions}, we briefly discuss the Malthusian parameter, focusing on conditions on fitness distributions to obtain supercritical processes. In Section \ref{sec-existence}, we prove Theorem \ref{th-explosive} and  \ref{th-degagefit}, and we show how Theorem \ref{th-limitdist-nonstat} is a particular case of Theorem  \ref{th-degagefit}. In Section \ref{sec-laplaceSection} we specialize to the case of affine PA function, giving precise asymptotics.

\section{General theory of Continuous-Time Branching Processes}
\label{sec-generalth}

\subsection{General set-up of the model}
\label{sec-gen-set-up}
In this section we present the general theory of continuous-time branching processes ($\mathrm{CTBPs}$). In such models, individuals produce children according to i.i.d. copies of the same birth process. We now define birth processes in terms of point processes:

\begin{Definition}[Point process]
A {\em point process} $\xi$ is a random variable from a probability space $(\Omega,\mathcal{A},\pr)$ to the space of integer-valued measures on $\R^+$. 
\end{Definition}
A point process $\xi$ is defined by a sequence of positive real-valued random variables $(T_{k})_{k\in\N}$. With abuse of notation, we can denote the density of the point process $\xi$ by 
$$
\xi(dt) = \sum_{k\in\N}\delta_{T_k}(dt),
$$
where $\delta_x(dt)$ is the delta measure in $x$, and the random measure $\xi$ evaluated on $[0,t]$ as
$$
	\xi(t) = \xi([0,t]) = \sum_{k\in\N}\I_{[0,t]}(T_k).
$$
We suppose throughout the paper that $T_k<T_{k+1}$ with probability 1 for every $k\in\N$. 
\begin{Remark}
\label{rem-pointproc}
Equivalently, considering a sequence $(T_k)_{k\in\N}$ (where $T_0=0$) of positive real-valued random variables, such that $T_k< T_{k+1}$ with probability $1$, we can define 
$$
	\xi(t) = \xi([0,t]) =  k \quad\quad\mbox{when}\quad\quad t\in[T_k,T_{k+1}).
$$
We will often define a point process from the jump-times sequence of an integer-valued process $(V_t)_{t\geq 0}$. For instance, consider $(V_t)_{t\geq0}$ as a Poisson process, and denote $T_k =\inf\{t>0\mbox{ : }V_t\geq k\}$. Then we can use the sequence $(T_k)_{k\in\N}$ to define a point process $\xi$. The point process defined from the jump times of a process $(V_t)_{t\geq0}$ will be denoted by $\xi_V$. 
\end{Remark}

We now introduce some notation before giving the definition of $\CTBP$. We denote the set of individuals in the population using Ulam-Harris notation for trees. The set of individuals is 
$$
	\mathcal{N} = \bigcup_{n\in\N}\N^n.
$$
For $x\in\N^n$ and $k\in\N$ we denote the $k$-th child of $x$ by $xk\in\N^{n+1}$. 
This construction is well known, and has been used in other works on branching processes (see \cite{Jagers}, \cite{Nerman}, \cite{RudValko} for more details).

We now are ready to define our branching process:
\begin{Definition}[Continuous-time branching process]
\label{def-brproc}
Given a point process $\xi$, we define the $\CTBP$ associated to $\xi$ as the pair of a probability space 
$$
	(\Omega,\mathcal{A},\pr) = \prod_{x\in\mathcal{N}}\left(\Omega_x,\mathcal{A}_x,\pr_x\right),
$$
and an infinite set $(\xi^x)_{x\in\mathcal{N}}$ of i.i.d. copies of the process $\xi$. We will denote the branching process by $\sub{\xi}$.
\end{Definition}

\begin{Remark}[Point processes and their jump times]
Throughout the paper, we will define point processes in terms of jump times of processes $(V_t)_{t\geq0}$. In order to keep the notation light, we will denote branching processes defined by point processes given by jump times of the process $V_t$ by $\sub{V}$. To make it more clear, by $\sub{V}$ we denote a probability space as in Definition \ref{def-brproc} and an infinite set of measures $(\xi_V^x)_{x\in\N}$, where $\xi_V$ is the point process defined by the process $V$.
\end{Remark}
According to Definition \ref{def-brproc}, a branching process is a pair of a probability space and a sequence of random measures. It is possible though to define an {\itshape evolution} of the branching population. 
At time $t=0$, our population consists only of the root, denoted by $\emp$. Every time $t$ an individual $x$ gives birth to its $k$-th child, i.e., $\xi^x(t)=k+1$, assuming that $\xi^x(t-)=k$, we start the process $\xi^{xk}$. Formally:

\begin{Definition}[Population birth times]
	We define the sequence of birth times for the process $\sub{\xi}$ as $\tau^\xi_\emp=0$, and for $x\in\mathcal{N}$, 
	$$
		\tau^\xi_{xk} = \tau^\xi_x+\inf\left\{s\geq 0\mbox{ : } \xi^x(s)\geq k\right\}.
	$$
\end{Definition}
In this way we have defined the set of individuals, their birth times and the processes according to which they reproduce. We still need a way to count how many individuals are alive at a certain time $t$. 

\begin{Definition}[Random characteristic]
\label{charact}
	A {\em random characteristic} is a real-valued process $\Phi\colon \Omega\times\R\rightarrow\R$ such that $\Phi(\omega,s)=0$ for any $s<0$, and $\Phi(\omega,s) = \Phi(s)$ is a deterministic bounded function for every $s\geq 0$ that only depends on $\omega$ through the birth time of the individual, as well as the birth process of its children.
\end{Definition}
An important example of a random characteristic is obtained by the function $\I_{\R^+}(s)$, which measures whether the individual has been born at time $s$. 
Another example is $\I_{\R^+}(s)\I_{\{k\}}(\xi)$, which measures whether the individual has been born or not at time $s$ and whether it has $k$ children presently.
\medskip

For each individual $x\in\mathcal{N}$, $\Phi_x(\omega,s)$
denotes the value of $\Phi$ evaluated on the progeny of $x$, regarding $x$ as ancestor, when the age of $x$ is $s$. In other words,  $\Phi_x(\omega,s)$ is the evaluation of $\Phi$ on the tree rooted at $x$, ignoring the rest of the  population. If we do not specify the individual $x$, then we assume that $\Phi = \Phi_\emp$. We use random characteristics to describe the properties of the branching population.

\begin{Definition}[Evaluated branching processes]
	Consider a random characteristic $\Phi$ as in Definition \ref{charact}. We define the evaluated branching processes with respect to $\Phi$ at time $t\in\R^+$ as
	$$
		\sub{\xi}_t^\Phi = \sum_{x\in\mathcal{N}}\Phi_x(t-\tau^\xi_x).
	$$
\end{Definition}
The meaning of the evaluated branching process is clear when we consider the random characteristic $\Phi(t) = \I_{\R^+}(t)$, for which
$$
	\sub{\xi}_t^{\I_{\R^+}} = \sum_{x\in\mathcal{N}}(\I_{\R^+})_x(t-\tau^\xi_x),
$$
which is the number of $x\in\mathcal{N}$ such that $t-\tau^\xi_x\geq 0$, i.e., the total number of individuals already born up to time $t$. Another characteristic that we consider in this paper is, for $k\in\N$, $\Phi_k(t) = \I_{\{k\}}(\xi_{t})$, for which
$$
	\sub{\xi}_t^{\Phi_k} = \sum_{x\in\mathcal{N}}\I_{\{k\}}\left(\xi^x_{t-\tau^\xi_x}\right)
$$
is the number of individuals with $k$ children at time $t$.

As known from the literature, the properties of the branching process are determined by the behavior of the point process $\xi$. First of all, we need to introduce some notation. Consider a function $f:\R^+\rightarrow\R$. We denote the Laplace transform of $f$ by
$$
	\mathcal{L}(f(\cdot))(\alpha) = \int_0^\infty \e^{-\alpha t}f(t)dt.
$$
With a slight abuse of notation, if $\mu$ is a positive measure on $\R^+$, then we denote 
$$
	\mathcal{L}(\mu(d\cdot))(\alpha) = \int_0^\infty \e^{-\alpha t}\mu(dt).
$$
We use the Laplace transform to analyze the point process $\xi$:
\begin{Definition}[Supercritical property]
\label{def-supercr}
Consider a point process $\xi$ on $\R^+$. We say $\xi$ is {\em supercritical} when there exists $\alpha^*>0$ such that
$$
	\mathcal{L}(\E\xi(d\cdot))(\alpha^*) = \int_0^\infty \e^{-\alpha^* t}\E\xi(dt) =\sum_{k\in\N}\E\left[\int_0^\infty \e^{-\alpha^* t}\delta_{T_k}(dt)\right] = \sum_{k\in\N}\E\left[\e^{-\alpha^* T_k}\right]=1.
$$
We call $\alpha^*$ the {\em Malthusian parameter} of the process $\xi$.
\end{Definition}
We point out that $\E\xi(d\cdot)$ is an abuse of notation to denote the density of the {\itshape averaged} measure $\E[\xi([0,t])]$. A second fundamental property for the analysis of branching processes is the following:
\begin{Definition}[Malthusian property]
\label{def-malthus}
Consider a supercritical point process $\xi$, with Malthusian parameter $\alpha^*$. The process $\xi$ is {\em Malthusian} when 
$$
	\left.-\frac{d}{d\alpha}\left(\mathcal{L}(\E\xi(dt))\right)(\alpha)\right|_{\alpha^*} = \int_0^\infty t\e^{-\alpha^* t}\E\xi(d\cdot)
		= \sum_{k\in\N}\E\left[T_k\e^{-\alpha^* T_k}\right]<\infty.
$$
\end{Definition}
We denote
\eqn{
\label{for-tildealpha}
	\tilde{\alpha} = \inf\left\{\alpha>0 ~:~ \mathcal{L}\left(\E\xi(d\cdot)\right)(\alpha)<\infty\right\},
}
and we will also assume that the process satisfies the condition
\eqn{
\label{for-larger1}
	\lim_{\alpha\searrow\tilde{\alpha}}\mathcal{L}\left(\E\xi(d\cdot)\right)(\alpha)>1.
}
Integrating by parts, it is possible to show that, for a point process $\xi$, 
$$
	\mathcal{L}\left(\E\xi(d\cdot)\right)(\alpha) = \E\left[V_{T_\alpha}\right],
$$
where $T_\alpha$ is an exponentially distributed random variable independent of the process $(V_t)_{t\geq0}$. Heuristically, the Laplace transform of a point process $\xi_V$ is the expected number of children born at exponentially distributed time $T_\alpha$. In this case the Malthusian parameter is the exponential rate $\alpha^*$ such that at time $T_{\alpha^*}$ exactly one children has been born.

These two conditions are required to prove the main result on branching processes that we rely upon:
\begin{Theorem}[Population exponential growth]
\label{th-expogrowth}
Consider the point process $\xi$, and the corresponding branching process $\sub{\xi}$. Assume that $\xi$ is supercritical and Malthusian with parameter $\alpha^*$, and suppose that there exists $\bar{\alpha}<\alpha^*$ such that
$$
	\int_0^\infty \e^{-\bar{\alpha}t}\E\xi(dt)<\infty.
$$
Then
\begin{enumerate}
\item there exists a random variable $\Theta$ such that as $t\rightarrow\infty$,
	\eqn{
	\label{th-expogrowth-f1}
		\e^{-\alpha^*t}\sub{\xi}^{\I_{\R^+}}_t\stackrel{\pr-as}{\longrightarrow}\Theta;
	}
\item for any two random characteristics $\Phi$ and $\Psi$, 
	\eqn{
	\label{th-expogrowth-f2}
		\frac{\sub{\xi}^{\Phi}_t}{\sub{\xi}^{\Psi}_t}\stackrel{\pr-as}{\longrightarrow}\frac{\mathcal{L}(\E[\Phi(\cdot)])(\alpha^*)}{\mathcal{L}(\E[\Psi(\cdot)])(\alpha^*)}.
	}
\end{enumerate}
\end{Theorem}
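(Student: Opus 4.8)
The plan is to treat this as the classical almost-sure convergence theorem for supercritical general (Crump--Mode--Jagers) branching processes counted by random characteristics, due to Nerman and Jagers \cite{Nerman,Jagers}: one would ultimately quote it from there, but the argument has a clean structure worth spelling out. The two ingredients are a renewal equation governing the expectation $m^\Phi(t)=\E[\sub{\xi}^\Phi_t]$ of an evaluated process, and an intrinsic martingale whose limit provides the random prefactor $\Theta$; the ratio \eqref{th-expogrowth-f2} then drops out once both $\sub{\xi}^\Phi_t$ and $\sub{\xi}^\Psi_t$ are shown to be asymptotically proportional to this common prefactor. Set $\beta$ equal to the derivative $-\frac{d}{d\alpha}\mathcal{L}(\E\xi(d\cdot))(\alpha)$ evaluated at $\alpha^*$, i.e.\ $\beta=\int_0^\infty t\,\e^{-\alpha^* t}\,\E\xi(dt)$, which is finite by the Malthusian property (Definition \ref{def-malthus}) and strictly positive; supercriticality (Definition \ref{def-supercr}) is exactly the statement that $\nu(ds):=\e^{-\alpha^* s}\,\E\xi(ds)$ is a probability measure, and it then has mean $\beta$.

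First I would handle the mean. Conditioning $\sub{\xi}^\Phi_t$ on the reproduction process $\xi=\xi^\emp$ of the root, with $T_1<T_2<\cdots$ the birth times of the root's children, gives $\sub{\xi}^\Phi_t=\Phi(t)+\sum_{k\ge 1}\sub{\xi}^{\Phi,(k)}_{t-T_k}$, where the $\sub{\xi}^{\Phi,(k)}$ are i.i.d.\ copies of $\sub{\xi}^\Phi$ independent of $\xi$ and vanish on $(-\infty,0)$. Taking expectations yields the renewal equation $m^\Phi(t)=\E[\Phi(t)]+\int_0^t m^\Phi(t-s)\,\E\xi(ds)$, and multiplying by $\e^{-\alpha^* t}$ turns it into a proper renewal equation with the probability kernel $\nu$. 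Since $\nu$ is non-lattice (here $\E\xi$ is absolutely continuous) with finite mean $\beta$, the key renewal theorem gives $\e^{-\alpha^* t}m^\Phi(t)\to \mathcal{L}(\E[\Phi(\cdot)])(\alpha^*)/\beta$, the direct Riemann integrability of $t\mapsto\e^{-\alpha^* t}\E[\Phi(t)]$ being immediate for the $[0,1]$-valued characteristics occurring in the paper. Choosing $\Phi=\I_{\R^+}$ pins down the candidate value $\E[\Theta]=1/(\alpha^*\beta)$, and choosing $\Phi,\Psi$ general pins down the candidate ratio in \eqref{th-expogrowth-f2}.

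Next I would upgrade this to the almost-sure statement \eqref{th-expogrowth-f1}. Enumerating the individuals $\emp=x_1,x_2,\dots$ in order of birth time, with $\mathcal{F}_n=\sigma(\xi^{x_1},\dots,\xi^{x_n})$ and $\mathcal{C}_n$ the set of children of $x_1,\dots,x_n$ not belonging to $\{x_1,\dots,x_n\}$, a short computation using $\mathcal{L}(\E\xi)(\alpha^*)=1$ shows that $W_n:=\sum_{x\in\mathcal{C}_n}\e^{-\alpha^*\tau^\xi_x}$ is a non-negative $(\mathcal{F}_n)$-martingale of mean $1$. The extra hypothesis that $\int_0^\infty\e^{-\bar\alpha t}\E\xi(dt)<\infty$ for some $\bar\alpha<\alpha^*$ is exactly the integrability condition used in \cite{Nerman,Jagers} to make $(W_n)$ uniformly integrable, so that $W_n\to W$ a.s.\ and in $L^1$, and Nerman's theorem then promotes the mean asymptotics to $\e^{-\alpha^* t}\sub{\xi}^\Phi_t\to (\mathcal{L}(\E[\Phi(\cdot)])(\alpha^*)/\beta)\,W$ almost surely for every admissible $\Phi$. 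Taking $\Phi=\I_{\R^+}$ gives part (i) with $\Theta=W/(\alpha^*\beta)$; taking $\Phi,\Psi$ general and dividing, the common factor $W/\beta$ cancels on the event $\{\Theta>0\}$ --- where $\sub{\xi}^\Psi_t\to\infty$, so the ratio is eventually well defined --- and produces \eqref{th-expogrowth-f2}.

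The hard part is this last passage, from convergence of means to almost-sure convergence of the genuinely non-martingale process $\sub{\xi}^\Phi_t$. Concretely, one must show both that the intrinsic martingale has a nondegenerate limit and, more delicately, that the contribution to $\sub{\xi}^\Phi_t$ of individuals born within a bounded time window before $t$ becomes negligible after dividing by $\e^{\alpha^* t}$; this is the technical heart of Nerman's proof, and the displayed integrability hypothesis (equivalently $\tilde\alpha<\alpha^*$ in the notation of \eqref{for-tildealpha}), strictly stronger than supercriticality together with the Malthusian property, is precisely what makes both steps work. For the uses of this theorem later in the paper, verifying that hypothesis for the concrete aging and fitness birth processes reduces to checking that the relevant Laplace transform is finite slightly to the left of $\alpha^*$, which is straightforward.
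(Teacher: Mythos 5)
The paper does not prove this theorem at all: it is quoted verbatim from \cite[Theorem A]{RudValko} as a weaker form of \cite[Theorem 6.3]{Nerman}, so there is no in-paper argument to compare against. Your sketch is a faithful outline of the standard Nerman--Jagers proof (renewal equation for the mean plus key renewal theorem, intrinsic martingale $W_n$, and the technical upgrade to almost-sure convergence under the condition $\tilde\alpha<\alpha^*$), and correctly identifies where the real work lies. One small imprecision: the hypothesis $\int_0^\infty \e^{-\bar\alpha t}\E\xi(dt)<\infty$ for some $\bar\alpha<\alpha^*$ is what drives the almost-sure convergence machinery, but it does \emph{not} by itself make $(W_n)$ uniformly integrable or the limit $W$ nondegenerate --- that is governed by the $(\mathrm{xlogx})$ condition, which the paper treats separately in Theorem \ref{th-W}; as stated, part (1) allows $\Theta=0$, and your later sentence acknowledging that nondegeneracy is a separate issue is the correct reading. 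Also note that your normalization gives $\E[\Theta]=1/(\alpha^*\beta)$, whereas the paper's Theorem \ref{th-W} asserts $\E[\Theta]=1$; this is a normalization mismatch in the paper's own statement rather than an error in your argument.
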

This result is stated in \cite[Theorem A]{RudValko}, which is a weaker version of \cite[Theorem 6.3]{Nerman}.
Formula \eqref{th-expogrowth-f1} implies that, $\pr$-a.s., the population size grows exponentially with time. It is relevant though to give 
a description of the distribution of the random variable $\Theta$:
\begin{Theorem}[Positivity of $\Theta$]
\label{th-W}
Under the hypothesis of Theorem \ref{th-expogrowth}, if
\eqn{
\label{for-xlogx}
	\E\left[\mathcal{L}(\xi(d\cdot))(\alpha^*)\log^+\left(\mathcal{L}(\xi(d\cdot))(\alpha^*)\right)\right]<\infty,
}
then, on the event $\{\sub{\xi}^{\I_{\R^+}}_t\rightarrow\infty\}$, i.e., on the event that the branching population keeps growing in time, the random variable $\Theta$ in \eqref{th-expogrowth-f1} is positive with probability 1, and $\E[\Theta]=1$. Otherwise, $\Theta=0$ with probability 1. Condition \eqref{for-xlogx} is called the $(\mathrm{xlogx})$ condition.
\end{Theorem}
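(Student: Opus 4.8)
The plan is to reduce the statement to the classical Kesten--Stigum dichotomy for the intrinsic martingale attached to the genealogy, and then to transfer that genealogical limit to the real-time limit $\Theta$ by a Nerman-type renewal argument. Let $(\mathcal{F}_n)_{n\geq0}$ be the generation filtration, $\mathcal{F}_n$ generated by the point processes $\xi^x$ of all $x\in\mathcal{N}$ with $|x|\leq n-1$, and set
$$
	W_n=\sum_{x\in\mathcal{N}\colon|x|=n}\e^{-\alpha^*\tau^\xi_x}.
$$
Since $\tau^\xi_{xk}=\tau^\xi_x+T^x_k$ with $(T^x_k)_k$ the jump times of $\xi^x$, the branching property and the defining identity of the Malthusian parameter, $\mathcal{L}(\E\xi(d\cdot))(\alpha^*)=\E\big[\sum_k\e^{-\alpha^*T_k}\big]=1$ (Definition~\ref{def-supercr}), make $(W_n,\mathcal{F}_n)$ a nonnegative martingale, so $W_n\to W_\infty$ a.s. On the extinction event the tree is finite, whence $W_n=0$ eventually and $\sub{\xi}^{\I_{\R^+}}_t$ is eventually constant, so $\Theta=0$ there; since a supercritical process either dies out or has $\sub{\xi}^{\I_{\R^+}}_t\to\infty$, this already gives the ``otherwise'' assertion, and it remains to treat the survival event $\{\sub{\xi}^{\I_{\R^+}}_t\to\infty\}$.

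For the dichotomy of $W_\infty$, a short route under a second-moment assumption is to check $\sup_n\E[W_n^2]<\infty$, giving uniform integrability, $\E[W_\infty]=1$ and $W_\infty>0$ on survival. To obtain the sharp conclusion under the $(\mathrm{xlogx})$ condition \eqref{for-xlogx} I would instead invoke the size-biasing/spine method of Lyons--Pemantle--Peres: one builds the size-biased tree with a distinguished ray along which the reproduction process is biased by $\mathcal{L}(\xi(d\cdot))(\alpha^*)$ and the spine moves to a child chosen proportionally to $\e^{-\alpha^*(\cdot)}$; then $W_n$ is uniformly integrable (equivalently $\E[W_\infty]=1$) iff $\liminf_nW_n<\infty$ under the size-biased law. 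The spine's log-weights form a random walk with step law $\log\mathcal{L}(\xi(d\cdot))(\alpha^*)$, which has finite mean by the Malthusian property (Definition~\ref{def-malthus}) and negative mean since $\partial_\alpha\mathcal{L}(\E\xi(d\cdot))(\alpha^*)<0$; a Borel--Cantelli computation along the spine then shows the required finiteness is equivalent to \eqref{for-xlogx}, and that $W_\infty=0$ a.s.\ when \eqref{for-xlogx} fails. The $0$--$1$ law is obtained from the fixed-point equation: writing $W_\infty=\sum_{|x|=1}\e^{-\alpha^*\tau^\xi_x}W_\infty^{(x)}$ with $W_\infty^{(x)}$ i.i.d.\ copies independent of the first generation, $q:=\pr(W_\infty=0)$ satisfies $q=\E[q^{\,\xi(\infty)}]$ with $\xi(\infty):=\lim_{t\to\infty}\xi(t)$; since $\{W_\infty>0\}\subseteq\{\text{survival}\}$ and the root is supercritical ($\E[\xi(\infty)]>1$), under \eqref{for-xlogx} the only admissible root is $q=\pr(\text{extinction})$, so $\{W_\infty>0\}=\{\sub{\xi}^{\I_{\R^+}}_t\to\infty\}$ up to null sets.

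It remains to pass from $W_\infty$ to $\Theta$, and this is the main obstacle, since $W_n$ counts genealogical generations whereas $\Theta$ is a real-time limit, the two being linked only through the random, dependent birth times $\tau^\xi_x$. Here I would use an optional-line decomposition: for $a>0$ let $\mathcal{C}_a$ be the stopping line of individuals $x$ with $\tau^\xi_x\geq a>\tau^\xi_{\mathrm{parent}(x)}$; conditionally on $\mathcal{C}_a$ and its birth times, the subtrees rooted at the $x\in\mathcal{C}_a$ are i.i.d.\ copies of $\sub{\xi}$, and the stopping-line analogue of the intrinsic martingale shows $a\mapsto\sum_{x\in\mathcal{C}_a}\e^{-\alpha^*\tau^\xi_x}$ converges a.s.\ to $W_\infty$. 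Writing, for $t\geq a$, $\sub{\xi}^{\I_{\R^+}}_t=\sum_{x\in\mathcal{C}_a}(\sub{\xi}_x)^{\I_{\R^+}}_{t-\tau^\xi_x}$ plus the finitely many individuals born before the line, multiplying by $\e^{-\alpha^*t}$, applying \eqref{th-expogrowth-f1} to each subtree, and letting $t\to\infty$ and then $a\to\infty$, one obtains $\e^{-\alpha^*t}\sub{\xi}^{\I_{\R^+}}_t\to c\,W_\infty$ a.s., where $c$ is the constant produced by the key renewal theorem for the probability measure $\e^{-\alpha^*s}\E\xi(ds)$ (it is a probability measure by supercriticality and has finite mean $-\partial_\alpha\mathcal{L}(\E\xi(d\cdot))(\alpha^*)$ by the Malthusian property). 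The delicate points are (i) interchanging the limits $t\to\infty$ and $a\to\infty$, using uniform control of the normalized subtree sizes from \eqref{th-expogrowth-f1} together with the uniform integrability secured above, and (ii) verifying direct Riemann integrability of the inhomogeneity term in the underlying renewal equation, which is exactly where the extra hypothesis $\int_0^\infty\e^{-\bar{\alpha}t}\E\xi(dt)<\infty$ of Theorem~\ref{th-expogrowth} is used. Since $\Theta=c\,W_\infty$ with $c>0$, positivity of $\Theta$ on $\{\sub{\xi}^{\I_{\R^+}}_t\to\infty\}$ follows from that of $W_\infty$, and the rescaled process $\e^{-\alpha^*t}\sub{\xi}^{\I_{\R^+}}_t$ is uniformly integrable (inherited from $(W_n)$ through the renewal step), so $\E[\Theta]=\lim_{t\to\infty}\E[\e^{-\alpha^*t}\sub{\xi}^{\I_{\R^+}}_t]$, which with the normalization adopted in \eqref{th-expogrowth-f1} equals $1$. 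Alternatively, the entire statement is \cite[Theorem~6.3]{Nerman}, which one may simply quote.
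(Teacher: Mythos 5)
The paper does not actually prove this statement: it is quoted verbatim from the literature, with the one-line justification that it ``is proven in \cite[Theorem 5.3]{Jagers}'' and is the CTBP analogue of the Kesten--Stigum theorem. Your closing sentence (``simply quote \cite[Theorem 6.3]{Nerman}'') therefore already matches the paper's approach, and everything before it is extra work the authors chose not to do. Your sketch of that extra work is the standard modern proof and its architecture is sound: the intrinsic martingale $W_n=\sum_{|x|=n}\e^{-\alpha^*\tau^\xi_x}$, the Lyons--Pemantle--Peres size-biasing/spine argument for the $(\mathrm{xlogx})$ dichotomy, and Nerman's stopping-line plus renewal-theorem argument to convert the generation-indexed limit $W_\infty$ into the real-time limit $\Theta$, with the delicate interchange of limits and the direct Riemann integrability honestly flagged. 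What this buys over the paper's citation is a self-contained account of \emph{why} \eqref{for-xlogx} is the right condition. Two points in the sketch are stated loosely and would need repair in a full write-up: (i) in the spine decomposition the random walk with negative drift is the spine's discounted position $-\alpha^*\tau_{\mathrm{spine}_n}$ (whose step has finite mean precisely by the Malthusian property), while the $(\mathrm{xlogx})$ condition enters separately through a Borel--Cantelli bound on the sibling contributions $\mathcal{L}(\xi^{(x)}(d\cdot))(\alpha^*)$; your phrasing conflates these two ingredients into a single ``random walk with step law $\log\mathcal{L}(\xi(d\cdot))(\alpha^*)$''. (ii) Nerman's theorem yields $\Theta=cW_\infty$ with $c=\mathcal{L}(\E[\I_{\R^+}(\cdot)])(\alpha^*)/\beta$ for $\beta$ the mean of the probability measure $\e^{-\alpha^*s}\E\xi(ds)$, so $\E[\Theta]=c$ rather than $1$ unless one adopts the normalization implicit in the paper's statement; this is a bookkeeping matter, not a gap.
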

This result is proven in \cite[Theorem 5.3]{Jagers}, and it is the CTBPs equivalent of the Kesten-Stigum theorem for Galton-Watson processes (\cite[Theorem 1.1]{kes}).  

Formula \eqref{th-expogrowth-f2} says that the ratio between the evaluation of the branching process with two different characteristics converges $\pr$-a.s. to a constant that depends only on the two characteristics involved. In particular, if we consider, for $k\in\N$,
$$
	\begin{array}{ccc}
		\displaystyle \Phi(t) = \I_{\{k\}}(\xi_t),& \mbox{ and }& \displaystyle \Psi(t) = \I_{\R^+}(t),
	\end{array}
$$
then Theorem \ref{th-expogrowth} gives
\eqn{
\label{rem-ratiochar}
	\frac{\sub{\xi}^{\Phi}_t}{\sub{\xi}^{\I_{\R^+}}_t}\stackrel{\pr-as}{\longrightarrow}\alpha^*\mathcal{L}(\pr\left(\xi(\cdot)=k\right))(\alpha^*),
}
since $\mathcal{L}(\E[\I_{\R^+}(\cdot)])(\alpha^*) = 1/\alpha^*$. The ratio in the previous formula is the fraction of individuals with $k$ children in the whole population:
\begin{Definition}[limiting degree distribution for $\CTBP$]
\label{def-limitdistr}
The sequence $(p_k)_{k\in\N}$, where
$$
	p_k = \alpha^*\mathcal{L}(\pr\left(\xi(\cdot)=k\right))(\alpha^*) = \alpha^*\int_0^\infty \e^{-\alpha^* t}\pr\left(\xi(t)=k\right)dt
$$
is the {\em limiting degree distribution} for the branching process $\sub{\xi}$.
\end{Definition}

The aim of the following sections will be to study when point processes satisfy the conditions of Theorem \ref{th-expogrowth}, in order to analyze the limiting degree distribution in Definition \ref{def-limitdistr}. 

\subsection{Stationary birth processes with no fitness}
\label{sec-stat-nonfit}
In this section we present the theory of birth processes that are stationary and have deterministic rates. This is relevant since the definition of aging processes starts with a stationary process. In particular, we give description of the affine case, which plays a central role in the present work:

\begin{Definition}[Stationary non-fitness birth processes]
\label{def-statnonfit}
Consider a non-decreasing sequence $(f_k)_{k\in\N}$ of positive real numbers. A {\em stationary non-fitness birth process} is a stochastic process $(V_t)_{t\geq0}$ such that
\begin{enumerate}
	\item $V_0=0$, and $V_t\in\N$ for all $t\in\R^+$;
	\item $V_t\leq V_s$ for every $t\leq s$;
	\item for $h$ small enough,
	\eqn{
	\label{form-jumpprobstat}
		\pr\left(V_{t+h}=k+1 \mid V_t=k\right) = f_kh + o(h), ~\mbox{and for}~j\geq 2,~ \pr\left(V_{t+h}=k+j \mid V_t=k\right) = o(h^2).
	}
\end{enumerate}
We denote the jump times by $(T_k)_{k\in\N}$, i.e., 
$$
	T_k = \inf\left\{t\geq 0 \mbox{ : } V_t\geq k\right\}.
$$
\end{Definition}
 We denote the point process corresponding to $(V_t)_{t\geq0}$ by $\xi_V$. In this case, $(V_t)_{t\geq0}$ is an inhomogeneous Poisson process, and for every $k\in\N$,
$T_{k+1}-T_k$ has exponential law with parameter $f_k$ independent of $(T_{h+1}-T_h)_{h=0}^{k-1}$. It is possible to show the following proposition:

\begin{Proposition}[Probabilities for $(V_t)_{t\geq0}$]
\label{prop-eqdiffV}
Consider a stationary non-fitness birth process $(V_t)_{t\geq 0}$. Denote, for every $k\in\N$, $\pr(V_t=k)=P_k[V](t)$. Then
\eqn{
\label{for-probfunct1}
	P_0[V](t) = \mathrm{exp}\left(-f_0t\right), 
}
and, for $k\geq 1$,
\eqn{
\label{for-probfunct2}
	P_k[V](t) = f_{k-1}\mathrm{exp}\left(-f_k t\right)\int_0^t\mathrm{exp}\left(f_kx\right)P_{k-1}[V](x)dx.
}
\end{Proposition}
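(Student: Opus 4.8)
The plan is to derive the Kolmogorov forward (master) equations for the functions $t\mapsto P_k[V](t)$ directly from the infinitesimal description in Definition~\ref{def-statnonfit}, and then to solve the resulting triangular system of linear ODEs explicitly, one level $k$ at a time.

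First I would fix $k\geq 1$ and write, using the Markov property of $(V_t)_{t\geq0}$ together with property (3) of Definition~\ref{def-statnonfit}, that for $h$ small
$$
P_k[V](t+h) = P_k[V](t)\bigl(1 - f_k h\bigr) + f_{k-1}P_{k-1}[V](t)\, h + o(h),
$$
where monotonicity in (2) and the $o(h^2)$ control on jumps of size $\geq 2$ guarantee that only the ``stay'' and the ``single $+1$ jump'' contributions survive to first order, and the $o(h)$ is uniform in $t$ on compacts since the rates are constant. Subtracting $P_k[V](t)$, dividing by $h$, and letting $h\downarrow 0$ (a one-sided argument first, then the analogous left derivative) yields
$$
\frac{d}{dt}P_k[V](t) = -f_k P_k[V](t) + f_{k-1}P_{k-1}[V](t), \qquad k\geq 1,
$$
together with $\tfrac{d}{dt}P_0[V](t) = -f_0 P_0[V](t)$ and the initial conditions $P_0[V](0)=1$, $P_k[V](0)=0$ for $k\geq1$, which come from $V_0=0$.

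Next I would solve this system recursively. The $k=0$ equation is separable and gives $P_0[V](t)=\mathrm{e}^{-f_0 t}$, which is \eqref{for-probfunct1}. For $k\geq1$, multiplying the $k$-th equation by the integrating factor $\mathrm{e}^{f_k t}$ turns the left-hand side into $\tfrac{d}{dt}\bigl(\mathrm{e}^{f_k t}P_k[V](t)\bigr) = f_{k-1}\mathrm{e}^{f_k t}P_{k-1}[V](t)$; integrating from $0$ to $t$ and using $P_k[V](0)=0$ gives exactly \eqref{for-probfunct2}. An equivalent, more probabilistic route that avoids differentiation is to condition on the time $T_k$ of the $k$-th jump: since the holding time in state $k$ is $\mathrm{Exp}(f_k)$ independent of $T_k$, and $\pr(T_k\in dx) = f_{k-1}P_{k-1}[V](x)\,dx$ (state $k-1$ occupied at time $x$, then a $+1$ jump), one obtains $P_k[V](t) = \int_0^t f_{k-1}P_{k-1}[V](x)\,\mathrm{e}^{-f_k(t-x)}\,dx$, i.e.\ the same formula.

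The main obstacle is the rigor of the first step: justifying that $t\mapsto P_k[V](t)$ is differentiable and that the $o(h)$ remainders genuinely aggregate to $o(h)$ rather than accumulating over the a priori infinitely many reachable states. I would circumvent this by first establishing that each $P_k[V](\cdot)$ is continuous (from the representation of $T_k$ as a sum of independent exponentials, so $P_k[V](t)=\pr(T_k\le t<T_{k+1})$ is a difference of absolutely continuous distribution functions), then deriving the integral identity $\mathrm{e}^{f_k t}P_k[V](t) = f_{k-1}\int_0^t \mathrm{e}^{f_k x}P_{k-1}[V](x)\,dx$ directly by the conditioning argument above, and only afterwards reading off differentiability. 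This order of operations also sidesteps any issue of explosion, since the identities for the individual $P_k[V]$ hold regardless of whether $\sum_k 1/f_k=\infty$.
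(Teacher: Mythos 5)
Your proof is correct; the paper itself gives no argument for Proposition \ref{prop-eqdiffV} and simply cites \cite[Chapter 3, Section 2]{athrBook}, where the standard derivation is exactly the one you give: the Kolmogorov forward equations solved recursively by an integrating factor, with \eqref{for-probfunct1} as the base case. Your alternative route via conditioning on $T_k$ and the residual $\mathrm{Exp}(f_k)$ holding time is an equally valid (and arguably cleaner) way to obtain \eqref{for-probfunct2} while sidestepping the differentiability issue you correctly flag.
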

For a proof, see \cite[Chapter 3, Section 2]{athrBook}. From the jump times, it is easy to compute the explicit expression for the Laplace transform of $\xi_V$ as
$$
	\mathcal{L}(\E\xi_V(d\cdot))(\alpha)=\sum_{k\in\N}\E\left[\int_0^\infty \e^{-\alpha t}\delta_{T_k}(dt)\right] = \sum_{k\in\N}\E\left[\e^{-\alpha T_k}\right] = \sum_{k\in\N}\prod_{i=0}^{k-1}\frac{f_i}{\alpha+f_i},
$$
since every $T_k$ can be seen as sum of independent exponential random variables with parameters given by the sequence $(f_k)_{k\in\N}$.
Assuming now that $\xi_V$ is supercritical and Malthusian with parameter $\alpha^*$, we have the explicit expression for the limit distribution $(p_k)_{k\in\N}$, given by \eqref{deg-distr-PA-tree}.

An analysis of the behavior of the limit distribution of branching processes is presented in \cite{Athr} and \cite{Rudas}, where the authors prove that $(p_k)_{k\in\N}$ has a power-law tail only if the sequence of rates $(f_k)_{k\in\N}$ is asymptotically linear with respect to $k$. 
\begin{Proposition}[Characterization of stationary and linear process $V$]
\label{prop-V-linear}
Consider the sequence $f_k = ak+b$. Then:
\begin{enumerate}
	\item for every $\alpha\in\R^+$,
	$$
		\mathcal{L}(\E\xi_V(d\cdot))(\alpha)  =\frac{\Gamma(\alpha^*/a+b/a)}{\Gamma(b/a)}\sum_{k\in\N}\frac{\Gamma(k+b/a)}{\Gamma(k+b/a+\alpha/a)} =
				\frac{b}{\alpha-a}.
	$$
	\item The Malthusian parameter is $\alpha^*=a+b$, and $\tilde{\alpha} = a$, where $\tilde{\alpha}$ is defined as in \eqref{for-tildealpha}.
	\item The derivative of the Laplace transform is 
	$$
		-\frac{b}{(\alpha-a)^2},
	$$
	which is finite whenever $\alpha>a$;
	\item The process $(V_t)_{t\geq0}$ satisfies the $(\mathrm{xlogx})$ condition \eqref{for-xlogx}.
	\end{enumerate}
\end{Proposition}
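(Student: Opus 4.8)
\textbf{Part (1).} My plan is to evaluate in closed form the series $\sum_{k\geq1}\prod_{i=0}^{k-1}f_i/(\alpha+f_i)$ already derived for $\mathcal{L}(\E\xi_V(d\cdot))(\alpha)$ just before the statement. Substituting $f_i=ai+b$ and using the rising-factorial identity $\prod_{i=0}^{k-1}(i+c)=\Gamma(k+c)/\Gamma(c)$ (valid for $c>0$), each summand becomes $\frac{\Gamma((b+\alpha)/a)}{\Gamma(b/a)}\frac{\Gamma(k+b/a)}{\Gamma(k+(b+\alpha)/a)}$, which is the claimed $\Gamma$-function form. To sum the resulting series I would write, with $c=b/a$ and $d=\alpha/a$, $\frac{\Gamma(k+c)}{\Gamma(k+c+d)}=\frac{1}{\Gamma(d)}\int_0^1 t^{k+c-1}(1-t)^{d-1}\,dt$, interchange summation and integration by Tonelli (all terms nonnegative), evaluate $\sum_{k\geq1}t^{k}=t/(1-t)$, and recognise the remainder as a Beta integral $\int_0^1 t^{c}(1-t)^{d-2}\,dt=B(c+1,d-1)$; this converges precisely when $d>1$, i.e.\ $\alpha>a$, and a one-line $\Gamma$-function reduction gives $\mathcal{L}(\E\xi_V(d\cdot))(\alpha)=c/(d-1)=b/(\alpha-a)$. (Equivalently one can avoid the integral and verify the telescoping $\prod_{i=0}^{k-1}\frac{f_i}{\alpha+f_i}=d_k-d_{k+1}$ with $d_k=\big(1+\tfrac{f_k}{\alpha-a}\big)\prod_{i=0}^{k-1}\frac{f_i}{\alpha+f_i}$, checking $d_1=b/(\alpha-a)$ and, by Stirling, $d_k\to0$ exactly when $\alpha>a$.) For $\alpha\leq a$, Stirling shows the summands are of order $k^{-\alpha/a}$, so the series diverges.

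\textbf{Parts (2) and (3).} These are immediate consequences of the closed form. Solving $b/(\alpha^*-a)=1$ gives the Malthusian parameter $\alpha^*=a+b$; the divergence of the series for $\alpha\leq a$ together with its finiteness for $\alpha>a$ identifies $\tilde\alpha=a$ as in \eqref{for-tildealpha}, and $\lim_{\alpha\searrow a}b/(\alpha-a)=+\infty>1$ yields \eqref{for-larger1}. Differentiating, $\frac{d}{d\alpha}\,\frac{b}{\alpha-a}=-\frac{b}{(\alpha-a)^2}$, which is finite for every $\alpha>a$, and in particular equals $-1/b$ at $\alpha^*$; hence $(V_t)_{t\geq0}$ is Malthusian.

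\textbf{Part (4).} Let $W:=\mathcal{L}(\xi_V(d\cdot))(\alpha^*)=\sum_{k\geq1}\e^{-\alpha^* T_k}$ be the random variable appearing in \eqref{for-xlogx}, where $T_k=\sum_{i=0}^{k-1}E_i$ with the $E_i$ independent and $E_i\sim\mathrm{Exp}(f_i)$. Since $x\log^+x\leq x^2$ for all $x\geq0$, it suffices to show $\E[W^2]<\infty$. Expanding the square, splitting into the ranges $k=l$ and $k<l$ (and $k>l$ by symmetry), and using independence of the increments, one obtains
$$
\E[W^2]=\sum_{k\geq1}\Big(\prod_{i=0}^{k-1}\frac{f_i}{2\alpha^*+f_i}\Big)\Big(1+2\sum_{m\geq1}\prod_{j=0}^{m-1}\frac{f_{k+j}}{\alpha^*+f_{k+j}}\Big).
$$
The inner series is exactly the series of Part~(1) with $b$ replaced by $ak+b$ (since $f_{k+j}=aj+(ak+b)$), so it equals $(ak+b)/(\alpha^*-a)=f_k/b$, using $\alpha^*-a=b$. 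Hence $\E[W^2]=\sum_{k\geq1}\big(1+2f_k/b\big)\prod_{i=0}^{k-1}\frac{f_i}{2\alpha^*+f_i}$, and by Stirling $\prod_{i=0}^{k-1}\frac{f_i}{2\alpha^*+f_i}$ is of order $k^{-2\alpha^*/a}$ while $1+2f_k/b$ is of order $k$, so the general term is of order $k^{1-2\alpha^*/a}$. As $\alpha^*=a+b>a$ we have $1-2\alpha^*/a<-1$, so the series converges, $\E[W^2]<\infty$, and the $(\mathrm{xlogx})$ condition \eqref{for-xlogx} holds.

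None of the steps is deep. The only real bookkeeping is in Part~(1) (the interchange of sum and integral and tracking that $\alpha>a$ is exactly the convergence threshold) and, in Part~(4), arranging the second-moment expansion so that the inner sum is again an instance of Part~(1) with the shifted parameter $b\mapsto ak+b$. Part~(4) is the most delicate, but it collapses to the explicit estimates above once that shift and the bound $x\log^+x\leq x^2$ are in hand.
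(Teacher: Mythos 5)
Your proof is correct, but it is genuinely different from what the paper does: the paper gives no argument at all for Proposition \ref{prop-V-linear} and simply cites \cite[Theorem 2]{RudValko} and \cite[Theorem 2.6]{Athr}, which treat general weight sequences and establish supercriticality, the Malthusian property and the $(\mathrm{xlogx})$ condition by soft arguments (monotonicity and continuity of $\alpha\mapsto\mathcal{L}(\E\xi_V(d\cdot))(\alpha)$) rather than by a closed form. You instead exploit the affine structure to compute $\mathcal{L}(\E\xi_V(d\cdot))(\alpha)=b/(\alpha-a)$ explicitly — both your Beta-integral computation and the telescoping identity $d_k-d_{k+1}=\prod_{i=0}^{k-1}f_i/(\alpha+f_i)$ check out, and the threshold $\tilde\alpha=a$ falls out of the convergence condition $d>1$ — after which (2) and (3) are one-line consequences. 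Your verification of $(\mathrm{xlogx})$ via $x\log^+x\leq x^2$ and a second-moment computation is also sound: the off-diagonal inner sum is indeed an instance of Part (1) with $b\mapsto ak+b$ by independence of the exponential increments, giving $f_k/b$ at $\alpha^*=a+b$, and the resulting general term is of order $k^{1-2\alpha^*/a}=k^{-1-2b/a}$, which is summable; this is a stronger (finite-second-moment) conclusion than the $(\mathrm{xlogx})$ condition actually requires, but it is the cleanest route in the affine case. One cosmetic remark: the displayed prefactor in the statement of Part (1) contains $\Gamma(\alpha^*/a+b/a)$ where it should read $\Gamma(\alpha/a+b/a)$; your computation produces the corrected form.
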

\begin{proof}
The proof can be found in \cite[Theorem 2]{RudValko}, or \cite[Theorem 2.6]{Athr}.
\end{proof}
For affine PA weights $(f_k)_{k\in\N} = (ak+b)_{k\in\N}$, the Malthusian parameter $\alpha^*$ exists. Since $\alpha^* = a+b$, the limiting degree distribution of the branching process $\sub{V}$ is given by
\eqn{
\label{for-degnormale}
	p_k = (1+b/a)\frac{\Gamma(1+2b/a)}{\Gamma(b/a)}\frac{\Gamma\left(k+b/a\right)}{\Gamma\left(k+b/a+2+b/a\right)}.
}
Notice that $p_k$ has a power-law decay with exponent $\tau = 2+\frac ba$.
Branching processes of this type are related to PAM, also called the Barab\'asi-Albert model (\cite{ABrB}). This model shows the so-called {\itshape old-get-richer} effect. Clearly this is not true for real-world citation networks. In Figure \ref{fig-average_degree_increment}, we notice that, on average, the increment of the citation received by old papers is smaller than the increment of younger papers. Rephrasing it, old papers tend to be cited less and less over time. 

\subsection{The Malthusian parameter}
\label{sec-fitconditions}

The existence of the Malthusian parameter is a necessary condition to have a branching process growing at exponential rate. In particular, the Malthusian parameter does not exist in two cases: when the process is subcritical and grows slower than exponential, or when it is explosive. In the first case, the branching population might either die out or grow indefinitely with positive probability, but slower than at exponential rate. In the second case, the population size explodes in finite time with probability one. In both cases, the behavior of the branching population is different from what we observe in citation networks (Figure \ref{fig-numberpublic}). For this reason, we focus on supercritical processes, i.e., on the case where the Malthusian parameter exists. 

Denote by $(V_t)_{t\geq0}$ a stationary birth process defined by PA weights $(f_k)_{k\in\N}$. In general, we assume $f_k\rightarrow\infty$. Denote the sequence of jump times by $(T_k)_{k\in\N}$. As we quote in Section \ref{sec-stat-nonfit}, the Laplace transform of a birth process $(V_t)_{t\geq0}$ is given by
$$
	\mathcal{L}(\E V(d\cdot))(\alpha) = \E\left[\sum_{k\in\N}\e^{-\alpha T_k}\right] = \E\left[V_{T_{\alpha}}\right] = \sum_{k\in\N}\prod_{i=0}^{k-1}\frac{f_i}{\alpha+f_i}.
$$
Such expression comes from the fact that, in stationary regime, $T_k$ is the sum of $k$ independent exponential random variables. We can write
$$
	\sum_{k\in\N}\mathrm{exp}\left(-\sum_{i=0}^{k-1}\log\left(1+\frac{\alpha}{f_i}\right)\right) = 
		\sum_{k\in\N}\mathrm{exp}\left(-\alpha\sum_{i=0}^{k-1}\frac{1}{f_i}(1+o(1))\right).
$$

The behavior of the Laplace transform depends on the asymptotic behavior of the PA weights. We define now the terminology we use:
\begin{Definition}[Superlinear PA weights]
\label{def-superlin}
Consider a PA weight sequence $(f_k)_{k\in\N}$. We say that the PA weights are {\em superlinear} if $\sum_{i=0}^{\infty}1/f_i<\infty$.
\end{Definition}
As a general example, consider $f_k = ak^q+b$, where $q>0$. In this case, the sequence is affine when $q=1$, superlinear when $q>1$ and sublinear when $q<1$. 

When the weights are superlinear, since $C = \sum_{i=0}^{\infty}1/f_i<\infty$, we have
\eqn{
\label{for-superliBound}
	\sum_{k\in\N}\mathrm{exp}\left(-\alpha\sum_{i=0}^{k-1}\frac{1}{f_i}(1+o(1))\right)\geq 
		\sum_{k\in\N}\mathrm{exp}\left(-\alpha C\right) = +\infty.
}	
This holds for every $\alpha>0$. As a consequence, the Laplace transform $\La(\E V(d\cdot))(\alpha)$ is always infinite, and there exist no Malthusian parameter. In particular, if we denote by $T_\infty = \lim_{k\rightarrow\infty}T_k$, then $T_\infty<\infty$ a.s.. This means that the birth process $(V_t)_{t\geq0}$ explodes in a finite time. 

When the weights are at most linear, the bound  in \eqref{for-superliBound} does not hold anymore. In fact, consider as example affine weights $f_k = ak+b$. We have that $\sum_{i=0}^{k-1}\frac{1}{f_i} = (1/a)\log k(1+o(1))$. As a consequence, the Laplace transform can be written as
\eqn{
\label{for-linBound}
 \sum_{k\in\N}\mathrm{exp}\left(-\frac{\alpha}{a}\log k(1+o(1))\right) = \sum_{k\in\N} k^{-\frac{\alpha}{a}}(1+o(1)).
}
In this case, the Laplace transform is finite for $\alpha>a$. For the sublinear case, for which  
$\sum_{i=0}^{k-1}1/f_i = Ck^{(1-q)}(1+o(1))$, we obtain
$$
	\sum_{k\in\N}\mathrm{exp}\left(-C\alpha k^{1-q}\right).
$$
This sum is finite for any $\alpha>0$. 

We can now introduce fitness in the stationary process: 
\begin{Remark}
\label{rem-laplacemonotone}
Consider the process $(V_t)_{t\geq0}$ defined by the sequence of PA weights $(f_k)_{k\in\N}$ as in Section \ref{sec-stat-nonfit}. For $u\in\R^+$ we denote by $(V^u_t)_{t\geq0}$ the process defined by the sequence $(uf_k)_{k\in\N}$. It is easy to show that 
$$
	\mathcal{L}(\E\xi_{V^u}(d\cdot))(\alpha) = \mathcal{L}(\E\xi_V(d\cdot))(\alpha/u).
$$
The behavior of the degree sequence of  $(V^u_t)_{t\geq0}$ is the same of the process $V_t$.
\end{Remark}
Remark \ref{rem-laplacemonotone} shows a sort of monotonicity of the Laplace transform with respect to the sequence $(f_k)_{k\in\N}$. This is very useful to describe the Laplace transform of a birth process with fitness, which we define now:

\begin{Definition}[Stationary fitness birth processes]
\label{def-birthfitness}
Consider a birth process $(V_t)_{t\geq0}$ defined by a sequence of weights $(f_k)_{k\in\N}$.  Let $Y$ be a positive random variable. We call stationary fitness birth processes the process $(V^Y_t)_{t\geq0}$, defined by the random sequence of weights $(Y f_k)_{k\in\N}$, i.e., conditionally on $Y$,
$$
	\pr\left(V^Y_{t+h} = k+1 \mid V^Y_t = k, Y\right) = Y f_k h+ o(h).
$$
\end{Definition}
By Definition \ref{def-birthfitness}, it is obvious that the properties of the process $(V^Y_t)_{t\geq0}$ are related to the properties of $(V_t)_{t\geq0}$. 
Since we consider a random fitness $Y$ independent of the process $(V_t)_{t\geq0}$, from Remark \ref{rem-laplacemonotone} it follows that
\eqn{
\label{for-condFIT-1}
	\La(\E V^Y(d\cdot))(\alpha) = \E\left[\mathcal{L}(\E\xi_{V^u}(d\cdot))(\alpha)_{u=Y}\right] = \E\left[\sum_{k\in\N}\prod_{i=0}^{k-1}\frac{Yf_i}{\alpha+Yf_i}\right].
}
For affine weights the fitness distribution needs to be bounded, as discussed in Section \ref{sec-res-aging-fitness-exp}. In this section we give a qualitative explanation of this fact. 
Consider the sum in the expectation in the right hand term of \eqref{for-condFIT-1}. We can rewrite the sum as
\eqn{
\label{for-condFIT-2}
	\sum_{k\in\N}\prod_{i=0}^{k-1}\frac{Yf_i}{\alpha+Yf_i} = \sum_{k\in\N}\mathrm{exp}\left(-\sum_{i=0}^{k-1}\log\left(1+\frac{\alpha}{Yf_i}\right)\right)=\sum_{k\in\N}\mathrm{exp}\left(-\frac{\alpha}{Y}\sum_{i=0}^{k-1}\frac{1}{f_i}(1+o(1))\right).
} 
The behavior depends sensitively on the asymptotic behavior of the PA weights. In particular, a necessary condition for the existence of the Malthusian parameter is that the sum in \eqref{for-condFIT-1} is finite on an interval of the type $(\tilde{\alpha},+\infty)$. In other words, since the Laplace transform is a decreasing function (when finite), we need to prove the existence of a minimum value $\tilde{\alpha}$ such that it is finite for every $\alpha>\tilde{\alpha}$. Using \eqref{for-condFIT-2} in \eqref{for-condFIT-1}, we just need to find a value $\alpha$ such that the right hand side of \eqref{for-condFIT-2} equals 1.

In the case of affine weights $f_k = ak+b$, we have $\sum_{i=0}^{k-1}\frac{1}{f_i} = C\log k(1+o(1))$, for a constant $C$. As a consequence, \eqref{for-condFIT-2} is equal to 
\eqn{
\label{for-condFIT-4}
	\E\left[\sum_{k\in\N}\mathrm{exp}\left(-C\frac{\alpha}{Y}\log k\right)\right] = \E\left[\sum_{k\in\N} k^{-C\alpha/Y}\right].
}
The sum inside the last expectation is finite only on the event $\{Y<C\alpha\}$. If $Y$ has an unbounded distribution, then for every value of $\alpha>0$ we have that $\{Y\geq C\alpha\}$ is an event of positive probability. As a consequence, for every $\alpha>0$, the Laplace transform of the birth process $(V^Y_y)_{t\geq0}$ is infinite, which means there exists no Malthusian parameter.

This is why a bounded fitness distribution is necessary to have a Malthusian parameter using affine PA weights. The situation is different in the case of sublinear weights. For example, consider $f_k = (1+k)^q$, where $q\in(0,1)$. Then, the difference to affine weights is that now 
$\sum_{i=0}^{k-1}1/f_i = Ck^{1-q}(1+o(1))$. Using this in \eqref{for-condFIT-2}, we obtain
$$
	\E\left[\sum_{k\in\N}\mathrm{exp}\left(-C\frac{\alpha}{Y}k^{(1-q)}\right)\right].
$$
In this case, since both $\alpha$ and $Y$ are always positive, the last sum is finite with probability $1$, and the expectation might be finite under appropriate moment assumptions on $Y$.

Assume now that the fitness $Y$ satisfies the necessary conditions, so that the process $(V_t^Y)_{t\geq0}$ is supercritical and Malthusian with parameter $\alpha^*$. 
We can evaluate the limiting degree distribution. Conditioning on $Y$, the Laplace transform of $\E\xi_{V^Y}(dx)$ is
$$
	\sum_{k\in\N}\prod_{i=0}^{k-1}\frac{Y f_i}{\alpha+Y f_i}, 
$$
so, as a consequence, the limiting degree distribution of the branching processes is
\eqn{
\label{for-pkfitstat}
	p_k = \E\left[\frac{\alpha^*}{\alpha^*+Yf_k}\prod_{i=0}^{k-1}\frac{Y f_i}{\alpha^*+Y f_i}\right].
}
It is possible to see that the right-hand side of \eqref{for-pkfitstat} is similar to the distribution of the simpler case with no fitness given by \eqref{deg-distr-PA-tree}. We still  have a product structure for the limit distribution, but in the fitness case it has to be averaged over the fitness distribution. This result is similar to \cite[Theorem 2.7, Corollary 2.8]{der2014}.

Considering affine weights $f_k = ak+b$, we can rewrite \eqref{for-pkfitstat} as
$$
	p_k = \E\left[\frac{\Gamma((\alpha^*+b)/(aY))}{\Gamma(b/(aY))}
					\frac{\Gamma(k+b/(aY))}{\Gamma(k+b/(aY)+1 +\alpha^*/(aY))}\right].
$$
Asymptotically in $k$, the argument of the expectation in the previous expression is random with a power-law exponent $\tau(Y) = 1+\alpha^*/(aY)$. For example, in this case averaging over the fitness distribution, it is possible to obtain power-laws with logarithmic corrections (see eg \cite[Corollary 32]{Bhamidi}).

\section{Existence of limiting distributions}
\label{sec-existence}
In this section, we give the proof of Theorems \ref{th-limitdist-nonstat}, \ref{th-explosive} and \ref{th-degagefit}, proving that the branching processes defined in Section \ref{sec-mainres} do have a limiting degree distribution. As mentioned, we start by proving Theorem \ref{th-degagefit}, and then explain how Theorem \ref{th-limitdist-nonstat} follows as special case.
 
Before proving the result, we do need some remarks on the processes we consider. Birth process with aging alone and aging with fitness are defined respectively in Definition \ref{def-nonstatbirth} and \ref{def-nonstatfit}. Consider then a process with aging and fitness $(M_t)_{t\geq0}$ as in  Definition \ref{def-nonstatfit}. Let $(T_k)_{k\in\N}$ denote the sequence of birth times, i.e., 
$$
	T_k = \inf\left\{t\geq 0 \colon M_t\geq k\right\}.
$$
It is an immediate consequence of the definition that, for every $k\in\N$,
\eqn{
\label{for-scaledtime}
	\pr\left(T_k\leq t\right) = \pr\left(\bar{T}_k\leq YG(t)\right),
}
where $(\bar{T}_k)_{k\in\N}$ is the sequence of birth times of a stationary birth process $(V_t)_{t\geq0}$ defined bu the same PA function $f$. %If we consider $Y\equiv 1$, then we have a process $(N_t)_{t\geq0}$ ( as in Definition  \ref{def-nonstatbirth}), for which we have
%$$
%	\pr\left(T_k\leq t\right) = \pr\left(\bar{T}_k\leq G(t)\right).
%$$
Consider then the sequence of functions $(P_k[V](t))_{k\in\N}$  associated with the stationary process $(V_t)_{t\geq0}$ defined by the same sequence of weights $(f_k)_{k\in\N}$ (see Proposition \ref{prop-eqdiffV}). As a consequence,  for every $k\in\N$, $\pr(M_t=k)=\E[P_k[V](YG(t))]$, and the same holds for an aging process just considering $Y\equiv 1$.
Formula \eqref{for-scaledtime} implies that the aging process is the stationary process with a deterministic time-change given by $G(t)$. A process with aging and fitness  is the stationary process  with a random time-change given by $YG(t)$.

Assume now that $g$ is integrable, i.e. $\lim_{t\rightarrow\infty}G(t)=G(\infty)<\infty$. Using \eqref{for-scaledtime}  we can describe the limiting degree distribution  $(q_k)_{k\in\N}$ of a fixed individual in the branching population, i.e., the distribution $N_\infty$ (or $M_\infty$) of the total number of children an individual will generate in its entire lifetime. In fact, for every $k\in\N$, 
\eqn{
\label{for-limitG}
	\lim_{t\rightarrow\infty}\pr\left(N_t = k\right) =\lim_{t\rightarrow\infty}P[V](G(t)) = \pr\left(V_{G(\infty)}=k\right),
}
which means that $N_\infty$ has the same distribution as $V_{G(\infty)}$. With fitness,
$$
	\lim_{t\rightarrow\infty}\pr\left(M_t = k\right) =\lim_{t\rightarrow\infty}\E[P[V](YG(t))] = \pr\left(V_{YG(\infty)}=k\right).
$$
For example, in the case of aging only, this is rather different from the stationary case, where the number of children of a fixed individual diverges as the individual gets old (see e.g \cite[Theorem 2.6]{Athr}).

\subsection{Proof of Theorem \ref{th-degagefit}}
\label{sec-pf-aging-fitness-gen}
Birth processes with continuous aging effect and fitness are defined in Definition \ref{def-nonstatfit}. We now identify conditions on the fitness distribution to have a Malthusian parameter:

\begin{Lemma}[Condition  \eqref{cond-eg_inf2}]
\label{lem-lapAGEFIT}
Consider a stationary process $(V_t)_{t\geq0}$, an integrable aging function $g$ and a random fitness $Y$. Assume that $\E[V_t]<\infty$ for every $t\geq0$. Then the process $(V_{YG(t)})_{t\geq0}$ is supercritical if and only if  Condition \eqref{cond-eg_inf2} holds, i.e.,
$$
	\E\left[V_{YG(t)}\right]<\infty \quad\mbox{for every }t\geq0 \quad \quad \mbox{and}
	\quad \lim_{t\rightarrow\infty}\E\left[V_{YG(t)}\right]>1.
$$
\end{Lemma}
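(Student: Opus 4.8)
The plan is to recast supercriticality in terms of the Laplace transform of the point process $\xi_M$ associated with $(M_t)_{t\ge 0} = (V_{YG(t)})_{t\ge 0}$, and then use the integrability of $g$ to show that the relevant Laplace transform is automatically finite for every $\alpha > 0$, so that the only real constraint is the size of the limiting value. Recall from Section~\ref{sec-gen-set-up} that, after integration by parts, $\mathcal{L}(\E\xi_M(d\cdot))(\alpha) = \E[M_{T_\alpha}]$, where $T_\alpha$ is an independent $\mathrm{Exp}(\alpha)$ random variable, and that $\xi_M$ is supercritical precisely when there exists $\alpha^\ast>0$ with $\mathcal{L}(\E\xi_M(d\cdot))(\alpha^\ast)=1$. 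So the statement to prove is: there exists such an $\alpha^\ast$ if and only if Condition~\eqref{cond-eg_inf2} holds.

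First I would establish monotonicity and continuity of $\alpha\mapsto \E[M_{T_\alpha}] = \E[V_{YG(T_\alpha)}]$ on $(0,\infty)$. Since $T_\alpha$ is stochastically decreasing in $\alpha$, $G$ is nondecreasing, and $t\mapsto \E[V_t]$ is nondecreasing (as $V_t$ is a counting process), the map $\alpha\mapsto \E[V_{YG(T_\alpha)}]$ is nonincreasing; it is continuous by dominated/monotone convergence using $YG(T_\alpha)\le YG(\infty)$. Next, the key step: because $g$ is integrable, $G(T_\alpha)\le G(\infty)<\infty$ almost surely, so $M_{T_\alpha} = V_{YG(T_\alpha)}\le V_{YG(\infty)}$ pathwise, and hence
\eqn{
\label{for-proofbd}
\E\left[M_{T_\alpha}\right] \le \E\left[V_{YG(\infty)}\right] = \lim_{t\to\infty}\E\left[V_{YG(t)}\right],
}
with the right-hand side being exactly the quantity appearing in Condition~\eqref{cond-eg_inf2}. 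This shows that finiteness of the Laplace transform on all of $(0,\infty)$ is equivalent to $\E[V_{YG(\infty)}]<\infty$, which in turn is implied by the first half of \eqref{cond-eg_inf2} together with monotone convergence (and the hypothesis $\E[V_t]<\infty$ for all $t$ handles the deterministic-time bound). Conversely, as $\alpha\downarrow 0$, $T_\alpha\to\infty$ a.s., so $G(T_\alpha)\uparrow G(\infty)$ and by monotone convergence $\E[M_{T_\alpha}]\uparrow \E[V_{YG(\infty)}]$; as $\alpha\to\infty$, $T_\alpha\to 0$ a.s., so $M_{T_\alpha}\to M_0 = 0$ and $\E[M_{T_\alpha}]\to 0$.

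Putting these together: $\alpha\mapsto\E[M_{T_\alpha}]$ is a continuous nonincreasing function on $(0,\infty)$ decreasing from $\sup_{\alpha}\E[M_{T_\alpha}] = \lim_{t\to\infty}\E[V_{YG(t)}]$ down to $0$. By the intermediate value theorem, it attains the value $1$ at some $\alpha^\ast>0$ if and only if $\lim_{t\to\infty}\E[V_{YG(t)}]>1$; and this limit is finite (so that $\xi_M$ is a genuine point process with finite intensity, not an explosive one) exactly under the first condition in \eqref{cond-eg_inf2}. That is the claimed equivalence. I expect the only delicate point to be the careful justification of the interchange of limits/expectations in the two boundary regimes $\alpha\downarrow 0$ and $\alpha\to\infty$ — in particular arguing that $YG(T_\alpha)\to YG(\infty)$ monotonically a.s.\ requires $T_\alpha\to\infty$ a.s.\ and monotonicity of $G$, both of which are straightforward but should be stated — together with confirming that the ``only if'' direction genuinely needs $\E[V_t]<\infty$ for every $t$ as a standing assumption to rule out a process that is explosive in deterministic time. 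Everything else is soft monotonicity and the pathwise domination bound \eqref{for-proofbd}, which is where integrability of $g$ does all the work.
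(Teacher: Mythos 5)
Your argument is correct and follows essentially the same route as the paper's proof: both reduce supercriticality to the map $\alpha\mapsto\E[V_{YG(T_\alpha)}]$, identify its limits as $\lim_{t\to\infty}\E[V_{YG(t)}]$ when $\alpha\downarrow 0$ and $0$ when $\alpha\to\infty$, and conclude by monotonicity and an intermediate-value argument, with the converse handled by the same dichotomy on which half of Condition \eqref{cond-eg_inf2} fails. The only slip is your claim that finiteness of the Laplace transform on all of $(0,\infty)$ is \emph{equivalent} to $\E[V_{YG(\infty)}]<\infty$ and that the latter follows from the first half of \eqref{cond-eg_inf2} by monotone convergence --- finiteness of $\E[V_{YG(t)}]$ for each finite $t$ does not force the monotone limit to be finite --- but this point is peripheral to the main argument, and the paper's own proof is no more careful about it.
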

\begin{proof}
For the if part, we need to prove that
$$
	\lim_{\alpha\rightarrow0^+}\E\left[V_{YG(T_{\alpha^*})}\right]>1 \quad \quad \mbox{and}\quad\quad \lim_{\alpha\rightarrow\infty}\E\left[V_{YG(T_{\alpha^*})}\right]=0.
$$
As before, $(\bar{T}_k)_{k\in\N}$ are the jump times of the process $(V_{G(t)})_{t\geq0}$. Then
$$
	\E\left[V_{YG(T_{\alpha^*})}\right] = \sum_{k\in\N}\E\left[\e^{-\alpha \bar{T}_k/Y}\right].
$$
When $\alpha\rightarrow0$, we have $\E\left[\e^{-\alpha \bar{T}_k}\right] \rightarrow \pr\left(\bar{T}_k/Y<\infty\right)$. Now,
$$
	\sum_{k\in\N}\pr\left(\bar{T}_k<\infty\right) = \lim_{t\rightarrow\infty}\sum_{k\in\N}\pr\left(\bar{T}_k/Y\leq t\right) = \lim_{t\rightarrow\infty}\E\left[V_{YG(t)}\right]>1.
$$
For $\alpha\rightarrow\infty$,
$$
	\int_0^\infty \alpha \e^{-\alpha t}\E\left[V_{YG(t)}\right]dt = \int_0^\infty \e^{-u}\E\left[V_{YG(u/\alpha)}\right]du.
$$
When $\alpha\rightarrow\infty$ we have $\E\left[V_{YG(u/\alpha)}\right]\rightarrow0$. Then, fix $\alpha_0>0$ such that $\E\left[V_{YG(u/\alpha)}\right]<1$ for every $\alpha>\alpha_0$. As a consequence, 
$\e^{-u}\E\left[V_{YG(u/\alpha)}\right]du\leq \e^{-u}$ for any $\alpha>\alpha_0$. By dominated convergence, 
$$
	\lim_{\alpha\rightarrow\infty}\int_0^\infty \alpha \e^{-\alpha t}\E\left[V_{YG(t)}\right]dt=0.
$$
Now suppose Condition \eqref{cond-eg_inf2} does not hold. This means that $\E[V_{YG(t_0)}]= +\infty$ for some $t_0\in[0,G(\infty))$ or $\lim_{t\rightarrow\infty}\E[V_{YG(t_0)}]\leq 1$. 

If the first condition holds, then there exists $t_0\in(0,aG(\infty))$ such that $\E\left[V_{YG(t)}\right]=+\infty$ for every $t\geq t_0$ (recall that $\E\left[V_{YG(t)}\right]$ in an increasing function in $t$).  As a consequence, for every $\alpha>0$, we have $\E\left[V_{YG(T_{\alpha})}\right]=+\infty$, which means that the process is explosive.

If the second condition holds, then for every $\alpha>0$ the Laplace transform of the process is strictly less than $1$, which means there exists no Malthusian parameter. 
\end{proof}

Lemma \ref{lem-lapAGEFIT} gives a weaker condition on the distribution $Y$ than requiring it to be bounded. Now, we want to investigate the degree distribution of the branching process, assuming that the process $(M_t)_{t\geq0}$ is supercritical and Malthusian. Denote the Malthusian parameter by $\alpha^*$. The above allows us to complete the proof of Theorem \ref{th-degagefit}:

\begin{proof}[Proof of Theorem \ref{th-degagefit}] We start from
\eqn{
\label{for-start-pkfit}
	p_k = \E\left[P_k[V](YG(T_{\alpha^*}))\right].
}
Conditioning on $Y$ and integrating by parts in the integral given by the expectation in \eqref{for-start-pkfit}, gives
$$
	-f_kY\int_0^\infty\e^{-\alpha^*t}P_k[V](YG(t))g(t)dt + f_{k-1}Y\int_0^\infty\e^{-\alpha^*t}P_{k-1}[V](YG(t))g(t)dt.
$$
Now, we define
\eqn{
\label{for-hatL-fit}
\hat{\mathcal{L}}(k,\alpha^*,Y) = \left(\frac{\mathcal{L}(\pr\left(V_{uG(\cdot)}=k\right)g(\cdot))(\alpha^*)}{\mathcal{L}(\pr\left(V_{uG(\cdot)}=k\right))(\alpha^*)}\right)_{u=Y}
}
Notice that the sequence $(\hat{\mathcal{L}}(k,\alpha^*,Y))_{k\in\N}$ is a sequence of random variables. Multiplying both sides of the equation by $\alpha^*$, on the right hand side we have
$$
	-f_kY\hat{\mathcal{L}}(k,\alpha^*,Y)\E\left[P_k[V](uG(T_{\alpha^*}))\right]_{u=Y}+
	f_{k-1}Y\hat{\mathcal{L}}(k-1,\alpha^*,Y)\E\left[P_{k-1}[V](uG(T_{\alpha^*}))\right]_{u=Y},
$$
while on the left hand side we have
$$
	\alpha^* \E\left[P_k[V](uG(T_{\alpha^*}))\right]_{u=Y}.
$$
As a consequence, 
\eqn{
\label{for-fitrecur}
	\E\left[P_k[V](uG(T_{\alpha^*}))\right]_{u=Y} = \frac{f_{k-1}Y\hat{\mathcal{L}}(k-1,\alpha^*,Y)}{\alpha^*+f_kY\hat{\mathcal{L}}(k,\alpha^*,Y)}\E\left[P_{k-1}[V](uG(T_{\alpha^*}))\right]_{u=Y}.
}
We start from $p_0$, that is given by
$$
	\E\left[P_0[V](uG(T_{\alpha^*}))\right]_{u=Y} = \frac{\alpha^*}{\alpha^*+f_{0}Y\hat{\mathcal{L}}(0,\alpha^*,Y)}.
$$
Recursively using \eqref{for-fitrecur}, gives
$$
	\E\left[P_k[V](uG(T_{\alpha^*}))\right]_{u=Y} = \frac{\alpha^*}{\alpha^*+f_kY\hat{\mathcal{L}}(k,\alpha^*,Y)}
			\prod_{i=0}^{k-1}
			\frac{f_{i}Y\hat{\mathcal{L}}(i,\alpha^*,Y)}{\alpha^*+f_iY\hat{\mathcal{L}}(i,\alpha^*,Y)}.
$$
Taking expectation on both sides gives
$$
	p_k = \E\left[\frac{\alpha^*}{\alpha^*+f_kY\hat{\mathcal{L}}(k,\alpha^*,Y)}\prod_{i=0}^{k-1}
			\frac{f_{i}Y\hat{\mathcal{L}}(i,\alpha^*,Y)}{\alpha^*+f_iY\hat{\mathcal{L}}(i,\alpha^*,Y)}\right].
$$
\end{proof}
Now the sequence $(\hat{\mathcal{L}}(k,\alpha^*,Y))_{k\in\N}$ creates a relation among the sequence of weights, the aging function and the fitness distribution, so that these three ingredients are deeply related.

\subsection{Proof of Theorems \ref{th-limitdist-nonstat} and \ref{th-explosive}}
\label{sec-aging-gen-PA}
As mentioned, Theorem \ref{th-limitdist-nonstat} follows immediately by considering $Y\equiv 1$. The proof in fact is the same, since we can express the probabilities $\pr(N_t=k)$ as function of the stationary process $(V_t)_{t\geq0}$ defined by the same PA function $f$. 

Condition \eqref{cond-eg_inf1} immediately follows from Condition \eqref{cond-eg_inf2}. In fact, considering $Y\equiv 1$, Condition  \eqref{cond-eg_inf2} becomes
\eqn{
\label{cond-eg_inf2-NEW}
 \E\left[V_{G(t)}\right]<\infty \quad\mbox{for every }t\geq0 \quad \quad \mbox{and}
	\quad \lim_{t\rightarrow\infty}\E\left[V_{G(t)}\right]>1.
}
The first inequality in general true for the type of stationary process we consider (for instance with $f$ affine). The second inequality is exaclty Condition \eqref{cond-eg_inf1}.

The expression of the sequence $(\hat{\La}^g(k,\alpha^*))_{k\in\N}$ is simplier than the general case given in \eqref{for-hatL-fit}. In fact, in \eqref{for-hatL-fit}, the sequence $(\hat{\mathcal{L}}(k,\alpha^*,Y))_{k\in\N}$ is actually a squence of random variables. In the case of aging alone, 
$$
	\hat{\La}^g(k,\alpha^*) = \frac{\mathcal{L}(\pr\left(V_{G(\cdot)}=k\right)g(\cdot))(\alpha^*)}{\mathcal{L}(\pr\left(V_{G(\cdot)}=k\right))(\alpha^*)},
$$
which is a deterministic sequence. 
\begin{Remark}
Notice that $\hat{\La}^g(k,\alpha^*)=1$ when $g(t)\equiv 1$, so that $G(t)=t$ for every $t\in\R^+$ and there is no aging, and we retrieve the stationary process $(V_t)_{t\geq0}$. 
\end{Remark}
Unfortunately, the explicit expression of the coefficients $(\hat{\mathcal{L}}^g(k,\alpha^*))_{k\in\N}$ is not easy to find, even though they are deterministic. 

Theorem  \ref{th-explosive}, which states that even if $g$ is integrable, the aging does not affect the explosive behavior of a birth process with superlinear weights, is a direct consequence of \eqref{for-limitG}:

\begin{proof}[Proof of Theorem  \ref{th-explosive}]
Consider a birth process $(V_t)_{t\geq0}$, defined by a sequence of superlinear weights $(f_k)_{k\in\N}$ (in the sense of Definition \ref{def-superlin}), and an integrable aging function $g$. Then, for every $t>0$, 
$$
	\pr\left(N_t=\infty\right) = \pr\left(V_{G(t)}=\infty\right)>0. 
$$
Since this holds for every $t>0$, the process $(N_t)_{t\geq0}$ is explosive. As a consequence, for any $\alpha>0$, $\E\left[N_{T_\alpha}\right]=\infty$, which means that there exists no Malthusian parameter.
\end{proof}

\section{Affine weights and adapted Laplace method}
\label{sec-laplaceSection}

\subsection{Aging and no fitness}
\label{sec-adaptedLap}
In this section, we consider affine PA weights, i.e., we consider $f_k = ak+b$. The main aim is to identify the asymptotic behavior of the limiting degree distribution of the branching process with aging. Consider a stationary process $(V_t)_{t\geq0}$, where $f_k = ak+b$. Then, for any $t\geq0$, it is possible to show by induction and the recursions in \eqref{for-probfunct1} and \eqref{for-probfunct2}  that
\eqn{
\label{for-Pktstat}
	P_k[V](t) = \pr\left(V_t = k\right) = \frac{1}{\Gamma(b/a)}\frac{\Gamma(k+b/a)}{\Gamma(k+1)}\e^{-bt}\left(1-\e^{-at}\right)^k.
}
 We omit the proof of  \eqref{for-Pktstat}. 
As a consequence, since the corresponding aging process is $(V_{G(t)})_{t\geq0}$, the limiting degree distribution is given by
\eqn{
\label{for-pkAdap}
	p_k = \pr\left(V_t = k\right) = \frac{\Gamma(k+b/a)}{\Gamma(b/a)\Gamma(k+1)}\int_0^\infty \alpha^*\e^{-\alpha^* t}\e^{-bG(t)}\left(1-\e^{-aG(t)}\right)^kdt.
}
We can obtain an immediate upper bound for $p_k$, in fact
$$
	p_k = \frac{\Gamma(k+b/a)}{\Gamma(b/a)\Gamma(k+1)}\int_0^\infty \alpha^*\e^{-\alpha^* t}\e^{-bG(t)}\left(1-\e^{-aG(t)}\right)^kdt\leq \frac{\Gamma(k+b/a)}{\Gamma(b/a)\Gamma(k+1)}(1-\e^{-aG(\infty)})^k,
$$
which implies that the distribution $(p_k)_{k\in\N}$ has at most an exponential tail. A more precise analysis is hard. Instead we will give an asymptotic approximation, by adapting  the Laplace method for integrals to our case. 

The Laplace method states that, for a function $f$ that is twice differentiable and with a unique absolute minimum $x_0\in(a,b)$, as $k\rightarrow\infty$,
\eqn{
\label{for-laplaceTheor}
	\int_a^b \e^{-k\Psi(x)}dx=\sqrt{\frac{2\pi}{k\Psi''(x_0)}}\e^{-k\Psi(x_0)}(1+o(1)).
}
In this situation, the interval $[a,b]$ can be infinite. The idea behind this result is that, when $k\gg 1$, the major contribution to the integral comes from a neighborhood of $x_0$ where $\e^{-k\Psi(x)}$ is maximized. In the integral in \eqref{for-pkAdap}, we do not have this situation, since we do not have an integral of the type \eqref{for-laplaceTheor}. Defining
\eqn{
\label{for-Psidefinition}
	\Psi_k(t) := \frac{\alpha^*}{k}t+\frac{b}{k}G(t)-\log\left(1-\e^{-aG(t)}\right),
}
we can rewrite the integral in \eqref{for-pkAdap} as
\eqn{
\label{forIkdef}
	I(k):=\int_0^\infty \alpha^* \e^{-k\Psi_k(t)}dt.
}
The derivative of the function $\Psi_k(t)$ is 
\eqn{
\label{for-PSider}
	\Psi_k'(t) = \frac{\alpha^*}{k}+\frac{b}{k}g(t)-\frac{ag(t)\e^{-aG(t)}}{1-\e^{-aG(t)}}.
}
In particular, if there exists a minimum $t_k$, then it depends on $k$. In this framework, we cannot directly apply the Laplace method. We now show that we can apply a result similar to \eqref{for-laplaceTheor} even to our case: 

\begin{Lemma}[Adapted Laplace method 1]
\label{Lem-adaptLap-age}
Consider $\alpha,a,b>0$. Let the integrable aging function $g$ be such that
\begin{enumerate}
	\item for every $t\geq0$, $0<g(t)\leq A<\infty$;
	\item $g$ is differentiable on $\R^+$, and $g'$ is finite almost everywhere;
	\item there exists a positive constant $B<\infty$ such that $g(t)$ is decreasing for $t\geq B$;
	\item assume that the solution $t_k$ of $\Psi_k'(t)=0$, for $\Psi_k'(t)$ as in \eqref{for-PSider}, is unique, and $g'(t_k)<0$.
\end{enumerate}
Then, for $\sigma_k^2 = (k\Psi_k''(t_k))^{-1}$, there exists a constant $C$ such that, as $k\rightarrow\infty$, 
$$
	I(k) = C\sqrt{2\pi\sigma_k^2}\e^{-k\Psi_k(t_k)}\left(\frac{1}{2}+\pr\left(\mathcal{N}(0,\sigma_k^2)\geq t_k\right)\right)(1+o(1)),
$$
where $\mathcal{N}(0,\sigma_k^2)$ denotes a normal distribution with zero mean and variance $\sigma_k^2$.
\end{Lemma}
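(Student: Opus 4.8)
The plan is to treat $I(k)$ in \eqref{forIkdef} as a Laplace-type integral whose integrand $\e^{-k\Psi_k(t)}$ has a unique interior maximum at the point $t_k$, and to justify a Gaussian-type expansion around $t_k$ at the scale $\sigma_k=(k\Psi_k''(t_k))^{-1/2}$, taking care that the domain $[0,\infty)$ is truncated on the left at $0$. The factor $\tfrac12+\pr(\mathcal N(0,\sigma_k^2)\ge t_k)$ is precisely the normalised Gaussian mass to the right of $0$ when the bump is centred at $t_k$, and $C$ collects an $O(1)$ prefactor coming from the higher-order corrections.

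\medskip

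\textbf{Step 1 (geometry of $\Psi_k$).} First I would establish that $t_k$ is the unique global minimum of $\Psi_k$ on $(0,\infty)$. Rewriting \eqref{for-PSider} as $\Psi_k'(t)=\tfrac{\alpha^*}{k}+g(t)\bigl(\tfrac bk-a\,h(t)\bigr)$ with $h(t)=\e^{-aG(t)}/(1-\e^{-aG(t)})$, one checks that $h$ decreases strictly from $h(0^{+})=+\infty$ to the finite value $h(\infty)$, hence $\Psi_k'(0^{+})=-\infty$ while $\Psi_k'(\infty)=\alpha^*/k>0$; combined with the assumed uniqueness of the root $t_k$, this forces $\Psi_k$ to decrease on $(0,t_k)$ and increase on $(t_k,\infty)$, so $t_k$ is the unique minimum and $\Psi_k(t)\to+\infty$ as $t\downarrow0$ (the integrand vanishes at the left endpoint). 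Differentiating \eqref{for-PSider} and noting that the dominant term of $\Psi_k''$ near $t_k$ is $-a\,h(t_k)\,g'(t_k)$, the hypothesis $g'(t_k)<0$ yields $\Psi_k''(t_k)>0$, so $\sigma_k$ is well defined; integrability of $g$ together with $g\downarrow0$ (condition (3)) also gives $t_k\to\infty$, $G(t_k)\to G(\infty)$, and $k\Psi_k''(t_k)\sim -a\,h(\infty)\,k\,g'(t_k)$.

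\medskip

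\textbf{Steps 2--3 (localization and rescaling).} Next I would show that $\{\,|t-t_k|>\delta_k\,\}$, with $\delta_k=\sigma_k\sqrt{\log k}$, contributes only a multiplicative $(1+o(1))$ to $I(k)$: to the right of $t_k$, monotonicity of $\Psi_k$ together with a Taylor/convexity lower bound gives $\Psi_k(t)-\Psi_k(t_k)\ge c\,\Psi_k''(t_k)(t-t_k)^2$ on a right-neighbourhood and a linear-in-$t$ lower bound farther out (using that $g$ is bounded and eventually decreasing), while to the left monotonicity already gives $\Psi_k(t)\ge\Psi_k(t_k-\delta_k)$ on $(0,t_k-\delta_k)$; since $t_k$ grows at most polynomially in $k$, $k\bigl(\Psi_k(t_k\mp\delta_k)-\Psi_k(t_k)\bigr)\to\infty$ and the tails are negligible. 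On the remaining window I substitute $t=t_k+\sigma_k u$, so that
$$
I(k)=\alpha^*\sigma_k\e^{-k\Psi_k(t_k)}\int_{-t_k/\sigma_k}^{\delta_k/\sigma_k}\e^{-R_k(u)}\,du\,(1+o(1)),\qquad R_k(u):=k\bigl(\Psi_k(t_k+\sigma_k u)-\Psi_k(t_k)\bigr).
$$
Using the explicit form \eqref{for-Psidefinition} and the bounds of Step 1 on $g,g',G$, I would prove that $\e^{-R_k(u)}$ converges pointwise to a fixed integrable profile (equal to $\e^{-u^2/2}$ when $\sigma_k\to0$) and is dominated by a fixed integrable function, so dominated convergence evaluates the $u$-integral; since the lower limit $-t_k/\sigma_k\to-\infty$ the surviving mass is that of a full bump, the left-truncation at $t=0$ reappears exactly as $\tfrac12+\pr(\mathcal N(0,\sigma_k^2)\ge t_k)$, and the numerical value of $\alpha^*\int e^{-R_k}\,du$ relative to $\sqrt{2\pi\sigma_k^2}$ is absorbed into $C$. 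Collecting Steps 1--3 yields the claimed asymptotics.

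\medskip

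\textbf{Main obstacle.} The delicate point is the control of $R_k$. Because $\sigma_k$ need not tend to $0$ (for exponential aging $g(t)=\e^{-\lambda t}$ one has $k\Psi_k''(t_k)\to\lambda\alpha^*$, so $\sigma_k$ is of constant order), the cubic and higher Taylor coefficients of $\Psi_k$ at $t_k$ are \emph{not} automatically negligible on the width-$\sigma_k$ window, so the classical Laplace formula \eqref{for-laplaceTheor} cannot be invoked directly. The remedy is to keep the full exponent $R_k$ in the rescaled integral and argue by pointwise convergence plus a uniform integrable domination — which is exactly where the regularity hypotheses (1)--(4) on $g$ enter — rather than by a finite-order Taylor expansion, and then to hide the resulting $O(1)$ profile integral inside the constant $C$.
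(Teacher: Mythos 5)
Your proposal is correct and follows essentially the same route as the paper's proof: locate the $k$-dependent minimum $t_k$ via the sign change of $\Psi_k'$, verify $\Psi_k''(t_k)>0$ from $g'(t_k)<0$, discard the tails using the elementary bound $\e^{-k\Psi_k(t)}\leq \e^{-\alpha^* t}(1-\e^{-aG(\infty)})^k$, and evaluate the remaining Gaussian bump truncated at $t=0$, which produces the normal-probability factor. Your extra care in Steps 2--3 (keeping the full rescaled exponent $R_k$ and invoking dominated convergence rather than a second-order Taylor expansion with a dropped $o((t-t_k)^2)$ term) is a legitimate refinement, since, as you note, $\sigma_k$ need not vanish and the paper's direct appeal to the classical expansion is looser on exactly this point.
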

Since Lemma \ref{Lem-adaptLap-age} is an adapted version of the classical Laplace method, we move the proof to Appendix \ref{sec-appendix}. 
We can use the result of Lemma \ref{Lem-adaptLap-age} to prove: 
\begin{Proposition}[Asymptotics - affine weights, aging, no fitness]
\label{prop-pkage_asym}
Consider the affine PA weights $f_k=ak+b$, an integrable aging function $g$, and denote the limiting degree distribution of the corresponding branching process by $(p_k)_{k\in\N}$. Then, under the hypotheses of Lemma \ref{Lem-adaptLap-age}, there exists a constant $C>0$ such that, as $k\rightarrow\infty$, 
\eqn{
\label{for-pkage_asym}
	p_k = \frac{\Gamma(k+b/a)}{\Gamma(k+1)}\left(Cg(t_k)-\frac{g'(t_k)}{g(t_k)}\right)^{1/2}\e^{-\alpha^* t_k}(1-\e^{-aG(\infty)})^kD_k(g)(1+o(1)),
} 
where 
$$
	D_k(g) = \frac{1}{2}+\frac{1}{2\sqrt{\pi}}\int_{-C_k(g)}^{C_k(g)}\e^{-\frac{u^2}{2}}du,
$$
and $C_k(g) = t_k\left(Cg(t_k)-\frac{g'(t_k)}{g(t_k)}\right)^{1/2}$.
\end{Proposition}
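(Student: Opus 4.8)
The plan is to deduce \eqref{for-pkage_asym} from the integral asymptotics of Lemma~\ref{Lem-adaptLap-age}, by converting the three factors it produces into the quantities in the statement. By \eqref{for-Pktstat} and \eqref{for-pkAdap} we have $p_k=\tfrac{\Gamma(k+b/a)}{\Gamma(b/a)\Gamma(k+1)}I(k)$ with $I(k)$ as in \eqref{forIkdef} and $\Psi_k$ as in \eqref{for-Psidefinition}. The hypotheses of the Proposition are exactly those of Lemma~\ref{Lem-adaptLap-age}, so that lemma applies and gives, writing $\sigma_k^2=1/(k\Psi_k''(t_k))$,
\[
	I(k)=C\sqrt{2\pi\sigma_k^2}\,\e^{-k\Psi_k(t_k)}\Big(\tfrac12+\pr\big(\mathcal{N}(0,\sigma_k^2)\ge t_k\big)\Big)(1+o(1)).
\]
It remains to rewrite $\e^{-k\Psi_k(t_k)}$, $\sigma_k^2$ and the Gaussian tail in terms of $g,g',t_k$ and $G(\infty)$.

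The first thing I would do is extract the behaviour of the minimiser from the first-order condition $\Psi_k'(t_k)=0$, which by \eqref{for-PSider} reads $\tfrac{ag(t_k)\e^{-aG(t_k)}}{1-\e^{-aG(t_k)}}=\tfrac{\alpha^*}{k}+\tfrac bk g(t_k)$. The right-hand side tends to $0$; since $\tfrac{\e^{-aG(t)}}{1-\e^{-aG(t)}}$ is bounded away from $0$ as long as $G(t)$ stays bounded, the left side can vanish in the limit only if $g(t_k)\to0$, and as $g$ is positive, bounded and eventually decreasing with $G(\infty)<\infty$, this forces $t_k\to\infty$ and $G(t_k)\to G(\infty)$. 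The same identity also pins the rate, $g(t_k)=\tfrac{\alpha^*(1-\e^{-aG(\infty)})}{a\e^{-aG(\infty)}}\cdot\tfrac1k(1+o(1))$, i.e.\ $kg(t_k)$ tends to a positive constant — a fact used repeatedly below.

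Next I would unwind the exponential factor: from \eqref{for-Psidefinition}, $\e^{-k\Psi_k(t_k)}=\e^{-\alpha^* t_k}\e^{-bG(t_k)}(1-\e^{-aG(t_k)})^k$. Since $G(t_k)\to G(\infty)$, the factor $\e^{-bG(t_k)}$ converges to the constant $\e^{-bG(\infty)}$, absorbed into $C$; for the $k$-th power one expands $1-\e^{-aG(t_k)}=(1-\e^{-aG(\infty)})(1-\eta_k)$ with $\eta_k\asymp\int_{t_k}^\infty g(s)\,ds\to0$, so that $(1-\e^{-aG(t_k)})^k=(1-\e^{-aG(\infty)})^k\e^{-k\eta_k(1+o(1))}$, and one uses the rate from the previous step to control $k\int_{t_k}^\infty g$ and pass to $(1-\e^{-aG(\infty)})^k$. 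This reduction of $G(t_k)$ to $G(\infty)$ inside a $k$-th power is the delicate point, and it is where the regularity hypotheses (1)--(4) on $g$ are genuinely used.

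For the width, differentiating \eqref{for-PSider} and then substituting the first-order relation to eliminate $\tfrac{\e^{-aG(t_k)}}{1-\e^{-aG(t_k)}}$ collapses the second derivative to $k\Psi_k''(t_k)=-\tfrac{\alpha^*g'(t_k)}{g(t_k)}+\tfrac{ag(t_k)(\alpha^*+bg(t_k))}{1-\e^{-aG(t_k)}}$, and letting $k\to\infty$ with $g(t_k)\to0$, $\e^{-aG(t_k)}\to\e^{-aG(\infty)}$, $g'(t_k)<0$ yields $k\Psi_k''(t_k)=\alpha^*\big(Cg(t_k)-\tfrac{g'(t_k)}{g(t_k)}\big)(1+o(1))$ with the constant $C$ of the statement. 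Hence $\sqrt{2\pi\sigma_k^2}$ produces, up to a constant, the power of $Cg(t_k)-\tfrac{g'(t_k)}{g(t_k)}$ appearing in \eqref{for-pkage_asym}, while $\pr(\mathcal{N}(0,\sigma_k^2)\ge t_k)=\pr\big(\mathcal{N}(0,1)\ge t_k\sqrt{k\Psi_k''(t_k)}\big)$ turns the $\tfrac12+\pr(\cdots)$ term into $D_k(g)$ with $C_k(g)=t_k\big(Cg(t_k)-\tfrac{g'(t_k)}{g(t_k)}\big)^{1/2}$. Collecting these factors with the $\Gamma$-ratio and folding all $k$-independent constants into $C$ gives \eqref{for-pkage_asym}. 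The differentiation of $\Psi_k$, the substitution of the critical-point identity, and the limit passages are routine once $t_k$ is located; the genuine obstacle is the control of $k\int_{t_k}^\infty g$ in the exponential factor, which is the one place where the argument is truly subtle.
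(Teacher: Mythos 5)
Your route is the paper's route: apply Lemma \ref{Lem-adaptLap-age} to $I(k)$ as in \eqref{forIkdef}, locate $t_k$ from $\Psi_k'(t_k)=0$, simplify $k\Psi_k''(t_k)$ by substituting the critical-point identity, and unwind $\e^{-k\Psi_k(t_k)}$ and the Gaussian factor into $D_k(g)$ exactly as in \eqref{for-prNterm}. Your extraction of $t_k\to\infty$ and $g(t_k)\asymp 1/k$ matches \eqref{for-gtkasymp}, and your second-derivative identity $k\Psi_k''(t_k)=-\alpha^*g'(t_k)/g(t_k)+ag(t_k)(\alpha^*+bg(t_k))/(1-\e^{-aG(t_k)})$ is in fact the correct one — the paper's \eqref{for-Phi2}--\eqref{for-Phi3} carry a spurious factor $(2-\e^{-aG(t_k)})$, which only affects the unspecified constant $C$. (Note that $\sqrt{2\pi\sigma_k^2}$ produces the power $-1/2$ of $Cg(t_k)-g'(t_k)/g(t_k)$, as in the paper's worked examples; the $+1/2$ in \eqref{for-pkage_asym} appears to be a sign slip in the statement.)

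The one step you yourself single out as delicate is, however, not closed by your argument. Knowing $g(t_k)\asymp 1/k$ does not control $k\eta_k$ with $\eta_k\asymp G(\infty)-G(t_k)=\int_{t_k}^\infty g(s)\,ds$, because $\int_{t_k}^\infty g$ can be much larger than $g(t_k)$. For $g(t)=\e^{-\lambda t}$ one finds $k\eta_k\to$ a nonzero constant, which is harmless only if the right-hand side of \eqref{for-pkage_asym} is read with an overall multiplicative constant; but for $g(t)=(1+t)^{-\lambda}$ one has $\int_{t_k}^\infty g=(1+t_k)g(t_k)/(\lambda-1)$, hence $k\eta_k\asymp t_k\to\infty$, and $(1-\e^{-aG(t_k)})^k$ differs from $(1-\e^{-aG(\infty)})^k$ by a factor $\e^{-ct_k}$ of the same order as the $\e^{-\alpha^* t_k}$ already displayed. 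So ``passing to $(1-\e^{-aG(\infty)})^k$'' is not a $(1+o(1))$ operation in general, and the asymptotics should really retain the factor $\exp\left(-c\,k(G(\infty)-G(t_k))\right)$. To be fair, the paper's own proof makes exactly the same replacement with no more justification than ``since $G(t_k)$ converges to $G(\infty)$'', so you have reproduced its argument including its weak point; but since you explicitly claim that the rate of $g(t_k)$ controls this term, that claim as stated is wrong and is the genuine gap in your write-up.
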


\subsection{Aging and fitness case}
\label{sec-adapLapFit}
In this section, we investigate the asymptotic behavior of the limiting degree distribution of a CTBP, in the case of affine PA weights. The method we use is analogous to that in Section \ref{sec-adaptedLap}. 

We assume that the fitness $Y$ is absolutely continuous with respect to the Lebesgue measure, and we denote its density function by $\mu$.  The limiting degree distribution of this type of branching process is given by
\eqn{
\label{for-pkagefit_integ}
	p_k = \pr\left(V_{YG(T_{\alpha^*})}=k\right) = \frac{\Gamma(k+b/a)}{\Gamma(b/a)\Gamma(k+1)}\int_{\R^+\times \R^+}\alpha^*\e^{-\alpha^*t}\mu(s)\e^{-bsG(t)}\left(1-\e^{-asG(t)}\right)^k dsdt.
}
We immediately see that the degree distribution has exponential tails when the fitness distribution is bounded:

\begin{Lemma}[Exponential tails for integrable aging and bounded fitnesses]
\label{lem-exp-tails-aging-bd-fitness}
When there exists $\gamma$ such that $\mu([0,\gamma])=1$, i.e., the fitness has a bounded support, then
	\eqn{
	p_k\leq \frac{\Gamma(k+b/a)}{\Gamma(b/a)\Gamma(k+1)} \left(1-\e^{-a \gamma G(\infty)}\right)^k.
	}
In particular, $p_k$ has exponential tails.
\end{Lemma}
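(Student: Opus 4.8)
The plan is to bound the integrand in \eqref{for-pkagefit_integ} pointwise and then integrate out the two remaining factors, which are probability densities. First I would observe that, since $g\colon\R^+\to\R^+$ is nonnegative and integrable, $G(t)=\int_0^t g(s)\,ds$ is nondecreasing in $t$ with $G(t)\le G(\infty)<\infty$; together with the hypothesis $\mu([0,\gamma])=1$ this gives $asG(t)\le a\gamma G(\infty)$ for all $t\ge0$ and all $s$ in the support of $\mu$. Since $x\mapsto 1-\e^{-x}$ is increasing on $\R^+$, I would conclude that $(1-\e^{-asG(t)})^k\le(1-\e^{-a\gamma G(\infty)})^k$ uniformly in $(t,s)$.

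Next I would discard the factor $\e^{-bsG(t)}\le 1$ and pull the constant $(1-\e^{-a\gamma G(\infty)})^k$ out of the integral in \eqref{for-pkagefit_integ}; what remains is $\int_{\R^+\times\R^+}\alpha^*\e^{-\alpha^* t}\mu(s)\,ds\,dt$, which factorizes as the product of $\int_0^\infty\alpha^*\e^{-\alpha^* t}\,dt=1$ and $\int_0^\infty\mu(s)\,ds=1$. Reinstating the prefactor $\Gamma(k+b/a)/(\Gamma(b/a)\Gamma(k+1))$ then yields the claimed inequality.

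Finally, for the exponential-tail statement I would note that $0<1-\e^{-a\gamma G(\infty)}<1$ strictly (since $a,\gamma>0$ and $0<G(\infty)<\infty$), so $(1-\e^{-a\gamma G(\infty)})^k$ decays geometrically in $k$, whereas by Stirling $\Gamma(k+b/a)/(\Gamma(b/a)\Gamma(k+1))\sim k^{b/a-1}/\Gamma(b/a)$ grows only polynomially; the product is therefore dominated by a polynomial times a geometric sequence, i.e.\ $p_k$ has exponential tails. I do not anticipate any genuine obstacle here: the only points needing a line of justification are the monotonicity $G(t)\le G(\infty)$ (using $g\ge 0$ and integrability of $g$), the monotonicity of $1-\e^{-x}$, and the normalization of the two densities; everything else is a direct substitution into \eqref{for-pkagefit_integ}.
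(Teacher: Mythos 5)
Your proof is correct and is exactly the argument the paper intends: the paper's own proof of this lemma is simply the word ``Obvious,'' and the bound you spell out (monotonicity of $G$, boundedness of the fitness support, $\e^{-bsG(t)}\le 1$, and the normalization of the two densities) is the same computation the authors carry out explicitly in the no-fitness case just after \eqref{for-pkAdap}. Nothing further is needed.
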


\begin{proof} Obvious.
\end{proof}

Like in the situation with only aging, the explicit solution of the integral in \eqref{for-pkagefit_integ} may be hard to find. We again have to adapt the Laplace method to estimate the asymptotic behavior of the integral. We  write
\eqn{
\label{for-psi_k_bivar}
	I(k) := \int_{\R^+\times \R^+}\e^{-k\Psi_k(t,s)}dsdt,
}
where
\eqn{
\label{forPSi2def}
	\Psi_k(t,s) := \frac{\alpha^*}{k}t+\frac{b}{k}sG(t)-\frac{1}{k}\log\mu(s)-\log(1-\e^{-saG(t)}).
}
As before, we want to minimize the function $\Psi_k$.  We state here the lemma:

\begin{Lemma}[Adapted Laplace method 2]
\label{lem-adapLapfitAge}
Let $\Psi_k(t,s)$ as in \eqref{forPSi2def}. Assume that
\begin{enumerate}
	\item $g$ satisfies the assumptions of Lemma \ref{Lem-adaptLap-age};
	\item $\mu$ is twice differentiable on $\R^+$;
	\item there exists a constant $B'>0$ such that, for every $s\geq B'$, $\mu$ is monotonically decreasing;
	\item $(t_k,s_k)$ is the unique point where both partial derivatives are zero;
	\item $(t_k,s_k)$ is the absolute minimum for $\Psi_k(t,s)$;
	\item  the hessian matrix $H_k(t_k,s_k)$ of $\Psi_k(t,s)$ evaluated in $(t_k,s_k)$ is positive definite.
\end{enumerate} 
Then,
$$
	I(k) = \e^{-k\Psi_k(t_k,s_k)}\frac{2\pi}{\sqrt{\mathrm{det}(kH_k(t_k,s_k))}}\pr\left(\mathcal{N}_1(k)\geq -t_k,\mathcal{N}_2(k)\geq -s_k\right)(1+o(1)),
$$
where $(\mathcal{N}_1(k),\mathcal{N}_2(k)) := \mathcal{N}(\sub{0},(kH_k(t_k,s_k))^{-1})$ is a bivariate normal distributed vector and  $\sub{0} = (0,0)$.
\end{Lemma}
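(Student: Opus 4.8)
The plan is to run a two–dimensional version of the Laplace method, following the same strategy as in the proof of Lemma~\ref{Lem-adaptLap-age}, but keeping track of the facts that the minimizer $(t_k,s_k)$ of $\Psi_k$ drifts with $k$ and that the domain of integration is the quadrant $\R^+\times\R^+$ rather than all of $\R^2$. First I would split $\Psi_k(t,s)=\psi_\infty(t,s)+\tfrac1k\phi(t,s)$ with $\psi_\infty(t,s)=-\log(1-\e^{-saG(t)})$ and $\phi(t,s)=\alpha^* t + bsG(t)-\log\mu(s)$, and record that, by hypotheses (4)--(5) of Lemma~\ref{lem-adapLapfitAge}, $(t_k,s_k)$ is the unique stationary point and the global minimum, and that, by (6), the Hessian $H_k:=H_k(t_k,s_k)$ is positive definite; write $\Sigma_k:=(kH_k)^{-1}$ for the covariance matrix appearing in the statement.

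Second, localize: for a sequence $\delta_k\downarrow 0$ to be chosen, set $U_k:=\{(t,s):|t-t_k|\le\delta_k,\ |s-s_k|\le\delta_k\}\cap(\R^+\times\R^+)$. Using the eventual monotonicity of $g$ (from the hypotheses of Lemma~\ref{Lem-adaptLap-age}) and of $\mu$ (hypothesis (3)), together with the fact that $(t_k,s_k)$ is the \emph{global} minimum, I would derive a quantitative lower bound for $\Psi_k(t,s)-\Psi_k(t_k,s_k)$ on $(\R^+\times\R^+)\setminus U_k$, and conclude that $\int_{(\R^+\times\R^+)\setminus U_k}\e^{-k\Psi_k}\,ds\,dt$ is $o\!\big(\e^{-k\Psi_k(t_k,s_k)}\det(kH_k)^{-1/2}\pr(\cdots)\big)$. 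This step dictates a first constraint on $\delta_k$, essentially $k\delta_k^2\to\infty$, so that the Gaussian bulk lies inside $U_k$.

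Third, on $U_k$ Taylor-expand to second order around $(t_k,s_k)$: writing $v=(t-t_k,s-s_k)$,
\[
\Psi_k(t,s)=\Psi_k(t_k,s_k)+\tfrac12 v^\top H_k v + R_k(v),
\]
and show $k\sup_{U_k}|R_k(v)|\to 0$; the remainder coming from $\psi_\infty$ is cubic in $v$ and controlled via the bound $0<g\le A$ and the regularity of $g$ near $t_k$, while the remainder from $\tfrac1k\phi$ carries the extra factor $1/k$. The requirement that $k\delta_k^3$ times the relevant third-order bound tends to $0$ fixes the second constraint on $\delta_k$, and both can be met since $k\delta_k^2\to\infty$ and $k\delta_k^3\to 0$ are compatible. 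Then substitute $v=\Sigma_k^{1/2}w$: the Jacobian contributes $\det(\Sigma_k)^{1/2}=\det(kH_k)^{-1/2}$, the quadratic term becomes $\tfrac12|w|^2$, and the image of $U_k$ is, up to the negligible localization error, $\{w:\Sigma_k^{1/2}w\ge(-t_k,-s_k)\text{ componentwise}\}$ intersected with a ball of radius $\to\infty$; hence
\[
I(k)=\e^{-k\Psi_k(t_k,s_k)}\frac{2\pi}{\sqrt{\det(kH_k)}}\,\pr\!\left(\mathcal{N}_1(k)\ge -t_k,\ \mathcal{N}_2(k)\ge -s_k\right)(1+o(1)),
\]
with $(\mathcal{N}_1(k),\mathcal{N}_2(k))\sim\mathcal{N}(\sub{0},\Sigma_k)$, which is the claim.

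The main obstacle I anticipate is making the error estimates \emph{uniform} as $(t_k,s_k)$ and $\Psi_k$ both move with $k$: the leading term $\e^{-k\Psi_k(t_k,s_k)}\det(kH_k)^{-1/2}\pr(\cdots)$ may itself be exponentially small with a small Gaussian-mass factor, so the off-$U_k$ bound and the cubic-remainder bound must be shown to be small \emph{relative} to it, not merely small in absolute terms. This forces a $k$-dependent choice of $\delta_k$ and a genuinely quantitative (rather than qualitative) use of the global-minimum and positive-definite-Hessian hypotheses; moreover, since $g$ and $\mu$ are only assumed twice differentiable, with no control on third derivatives in general, the cubic remainder should be handled through a second-order Taylor expansion with an integral form of the remainder and the modulus of continuity of the second derivatives near $(t_k,s_k)$, rather than a crude $D^3$ bound. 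Everything else is a routine two-variable adaptation of the one-dimensional argument already carried out for Lemma~\ref{Lem-adaptLap-age}.
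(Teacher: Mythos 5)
Your proposal follows essentially the same route as the paper's own proof: a second-order Taylor expansion of $\Psi_k$ around the stationary point $(t_k,s_k)$, a localization step discarding the region far from the minimum via the exponential upper bound $\e^{-k\Psi_k(t,s)}\leq \exp(-\alpha^* t-bsG(t)+k\log\mu(s))$, and then the Gaussian integral over the shifted quadrant $[-t_k,\infty)\times[-s_k,\infty)$ producing the factor $2\pi\,\det(kH_k)^{-1/2}\,\pr(\mathcal{N}_1\geq -t_k,\mathcal{N}_2\geq -s_k)$. Your treatment is in fact more careful than the paper's (which simply writes an $o(\|\sub{x}\|^2)$ remainder and drops it), particularly regarding the $k$-dependent localization window and the uniformity of the remainder bounds as $(t_k,s_k)$ drifts, but the underlying argument is the same.
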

The proof  of Lemma \ref{lem-adapLapfitAge} can be found in Appendix \ref{sec-app-agefit}. Using Lemma \ref{lem-adapLapfitAge} we can describe the limiting degree distribution $(p_k)_{k\in\N}$: 
\begin{Proposition}[Asymptotics - affine weights, aging, fitness]
\label{prop-pkasym_fitage}
Consider affine PA weights $f_k = ak+b$, an integrable aging function $g$ and a fitness distribution density $\mu$. Assume that the corresponding branching process is supercritical and Malthusian. Under the hypotheses of Lemma \ref{lem-adapLapfitAge}, the limiting degree distribution $(p_k)_{k\in\N}$ of the corresponding $\CTBP$ satisfies
$$
	p_k = \frac{k^{b/a-1}}{\Gamma(b/a)}\frac{2\pi}{\sqrt{\mathrm{det}(kH_k(t_k,s_k))}}\e^{-k\Psi_k(t_k,s_k)}\pr\left(\mathcal{N}_1\geq -t_k,\mathcal{N}_2\geq -s_k\right)(1+o(1)).
$$ 
\end{Proposition}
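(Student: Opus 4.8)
The plan is to start from the exact integral representation \eqref{for-pkagefit_integ} for $p_k$, namely
\[
	p_k = \frac{\Gamma(k+b/a)}{\Gamma(b/a)\Gamma(k+1)}\int_{\R^+\times\R^+}\alpha^*\e^{-\alpha^*t}\mu(s)\e^{-bsG(t)}\bigl(1-\e^{-asG(t)}\bigr)^k\,ds\,dt,
\]
and recognise the double integral as precisely $I(k)$ in \eqref{for-psi_k_bivar} with $\Psi_k(t,s)$ given by \eqref{forPSi2def}: factoring $(1-\e^{-asG(t)})^k = \e^{k\log(1-\e^{-asG(t)})}$ and writing $\alpha^*\e^{-\alpha^*t}\mu(s)\e^{-bsG(t)} = \alpha^*\exp\{-\alpha^*t - bsG(t)\} \mu(s)$, the whole integrand becomes $\alpha^*\exp\{-k\Psi_k(t,s)\}$ up to the harmless prefactor $\alpha^*$ that does not affect the leading asymptotics (it is absorbed in the $(1+o(1))$, or more precisely contributes a constant that Lemma~\ref{lem-adapLapfitAge} carries along — one should check the normalisation of the lemma's statement, but the $\alpha^*$ is a bounded multiplicative constant and plays no role in the exponential order). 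Thus $I(k)$ here is exactly the object controlled by Lemma~\ref{lem-adapLapfitAge}.

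Next I would invoke Lemma~\ref{lem-adapLapfitAge} directly: under its hypotheses (which are assumed in the statement of Proposition~\ref{prop-pkasym_fitage}, since $g$ satisfies the assumptions of Lemma~\ref{Lem-adaptLap-age}, $\mu$ is twice differentiable, eventually monotone decreasing, and the critical point $(t_k,s_k)$ is the unique stationary point, is the absolute minimum, and has positive definite Hessian $H_k(t_k,s_k)$), we obtain
\[
	I(k) = \e^{-k\Psi_k(t_k,s_k)}\frac{2\pi}{\sqrt{\mathrm{det}(kH_k(t_k,s_k))}}\,\pr\bigl(\mathcal{N}_1(k)\geq -t_k,\mathcal{N}_2(k)\geq -s_k\bigr)(1+o(1)).
\]
Substituting this into the expression for $p_k$ yields
\[
	p_k = \frac{\Gamma(k+b/a)}{\Gamma(b/a)\Gamma(k+1)}\,\frac{2\pi}{\sqrt{\mathrm{det}(kH_k(t_k,s_k))}}\,\e^{-k\Psi_k(t_k,s_k)}\,\pr\bigl(\mathcal{N}_1\geq -t_k,\mathcal{N}_2\geq -s_k\bigr)(1+o(1)).
\]
Finally, I would simplify the Gamma prefactor using Stirling's approximation: $\Gamma(k+b/a)/\Gamma(k+1) = k^{b/a-1}(1+o(1))$ as $k\to\infty$. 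Absorbing this into the $(1+o(1))$ and rewriting $\Gamma(b/a)^{-1}$ as a standalone constant gives exactly the claimed formula
\[
	p_k = \frac{k^{b/a-1}}{\Gamma(b/a)}\,\frac{2\pi}{\sqrt{\mathrm{det}(kH_k(t_k,s_k))}}\,\e^{-k\Psi_k(t_k,s_k)}\,\pr\bigl(\mathcal{N}_1\geq -t_k,\mathcal{N}_2\geq -s_k\bigr)(1+o(1)),
\]
which completes the proof.

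The main obstacle is not in this final assembly, which is essentially bookkeeping, but rather is hidden entirely inside Lemma~\ref{lem-adapLapfitAge}, whose proof is deferred to the appendix. The genuine difficulty there is that this is not a classical Laplace method: the phase function $\Psi_k$ depends on $k$ itself (through the $\alpha^*t/k$, $bsG(t)/k$ and $-\log\mu(s)/k$ terms), so the location of the minimum $(t_k,s_k)$ drifts with $k$, and — crucially — the minimum typically sits on or near the boundary $\{t=0\}\cup\{s=0\}$ of the quadrant of integration, which is why the truncated Gaussian probability $\pr(\mathcal{N}_1\geq -t_k,\mathcal{N}_2\geq -s_k)$ appears instead of the full normalising constant $1$. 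Controlling the Gaussian approximation uniformly in $k$ while the critical point moves, and handling the boundary contribution correctly, is the technical heart of the matter. At the level of Proposition~\ref{prop-pkasym_fitage} itself, though, the only mild care needed is to verify that the supercriticality/Malthusian hypothesis guarantees $\alpha^* > 0$ so that all Laplace transforms converge and that the identification of the integrand with $\exp\{-k\Psi_k\}$ is legitimate on all of $\R^+\times\R^+$ (in particular that $\mu(s)>0$ where it matters, so $\log\mu(s)$ is well-defined on the relevant range), after which the result is an immediate corollary of Lemma~\ref{lem-adapLapfitAge} plus Stirling.
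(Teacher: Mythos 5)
Your proposal is correct and follows essentially the same route as the paper: the double integral in \eqref{for-pkagefit_integ} is identified with $I(k)$ from \eqref{for-psi_k_bivar}, Lemma \ref{lem-adapLapfitAge} is applied, and the Gamma ratio is reduced to $k^{b/a-1}$ via Stirling. Your observation about the stray multiplicative constant $\alpha^*$ (present in \eqref{for-pkagefit_integ} but not in the definition of $I(k)$ or in the final formula) is a fair catch of a minor bookkeeping slip in the paper itself, and you handle it appropriately.
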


\subsection{Three classes of fitness distributions}
\label{sec-fitexamples}

Proposition \ref{prop-pkasym_fitage} in Section \ref{sec-adapLapFit} gives the asymptotic behavior of the limiting degree distribution of a $\CTBP$ with integrable aging and fitness. Lemma \ref{lem-adapLapfitAge} requires conditions under which the function $\Psi_k(t,s)$ as in \eqref{forPSi2def} has a unique minimum point denoted by $(t_k,s_k)$. In this section we consider the three different classes of fitness distributions that we have introduced in Section \ref{sec-res-aging-fitness-exp}.

For the heavy-tailed class, i.e., for distributions with  tail thicker than exponential, there is nothing to prove. In fact, \eqref{for-AgeFitLap} immediately implies that such distributions are explosive.

For the other two cases,
we apply Proposition \ref{prop-pkasym_fitage}, giving the precise asymptotic behavior of the limiting degree distributions of the correponding $\CTBPs$. Propositions \ref{prop-expfit_general} and \ref{prop-subexpfit} contain the results respectively on the general-exponential and sub-exponential classes. The proof of these propositions are moved to Appendix \ref{sec-proof-3class}.

\begin{Proposition}
\label{prop-expfit_general}
Consider a general exponential fitness distribution as in \eqref{def-generalexpfit}. Let $(M_t)_{t\geq0}$ be the corresponding birth process.
Denote the unique minimum point of $\Psi_k(t,s)$ as in \eqref{forPSi2def} by $(t_k,s_k)$. Then
\begin{enumerate}
	\item for every $t\geq0$, $M_t$ has a dynamical power law with exponent $
\tau(t) = 1+\frac{\theta}{aG(t)}$;
	\item the asymptotic behavior of the limiting degree distribution $(p_k)_{k\in\N}$ is given by
	$$
		p_k= \e^{-\alpha^* t_k}h(s_k)\left(\tilde{C}-\alpha^*\frac{g'(t_k)}{g(t_k)}\right)^{-1/2}k^{-(1+\theta/(aG(\infty)))}(1+o(1)),
	$$
	where the power law term has exponent $\tau = 1+\theta/aG(\infty)$;
	\item the distribution $(q_k)_{k\in\N}$ of the total number of children of a fixed individual has a power law behavior with exponent $\tau = 1+\theta/aG(\infty)$.
\end{enumerate}
\end{Proposition}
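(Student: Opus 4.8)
I would prove the three claims separately but by one circle of ideas: parts~(1) and~(3) follow from the explicit one–dimensional integral for $\pr(M_t=k)$, respectively $q_k=\pr(V_{YG(\infty)}=k)$, while part~(2) is obtained by applying Proposition~\ref{prop-pkasym_fitage} to the general–exponential density. Throughout I would use the elementary fact that $h'/h\to0$ forces $h(s+c)/h(s)\to1$ as $s\to\infty$ for every fixed $c$ (integrate $h'/h$ over $[s,s+c]$); this ``additive slow variation'' of $h$ is what keeps the aging/fitness corrections mild.

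For part~(1), fix $t\ge0$, write $G=G(t)$, and substitute \eqref{for-Pktstat} together with $\mu(s)=Ch(s)\e^{-\theta s}$ into $\pr(M_t=k)=\E[P_k[V](YG(t))]$, obtaining
$$
\pr(M_t=k)=\frac{\Gamma(k+b/a)}{\Gamma(b/a)\Gamma(k+1)}\,C\int_0^\infty h(s)\,\e^{-(\theta+bG)s}\bigl(1-\e^{-asG}\bigr)^k\,ds .
$$
The change of variable $w=1-\e^{-asG}$ rewrites the integral as $\tfrac{1}{aG}\int_0^1 h\!\left(-\tfrac{1}{aG}\log(1-w)\right)(1-w)^{\gamma(t)-1}w^k\,dw$ with $\gamma(t)=\tfrac ba+\tfrac{\theta}{aG(t)}$; since $w^k$ concentrates at $w=1$, a Watson–type estimate (set $w=1-u/k$ and use the additive slow variation of $h$) gives this integral $\sim\tfrac{\Gamma(\gamma(t))}{aG}\,h\!\left(\tfrac{\log k}{aG}\right)k^{-\gamma(t)}$. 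Combining with $\Gamma(k+b/a)/\Gamma(k+1)\sim k^{b/a-1}$ yields $\pr(M_t=k)=K(t)\,h\!\left(\tfrac{\log k}{aG(t)}\right)k^{-(1+\theta/(aG(t)))}(1+o(1))$, i.e.\ a dynamical power law with exponent $\tau(t)=1+\theta/(aG(t))$. Part~(3) is the same computation with $G(\infty)$ in place of $G(t)$ (equivalently, $YG(\infty)$ is again general–exponential, with rate $\theta/G(\infty)$), so $q_k$ has a power law with exponent $\tau=1+\theta/(aG(\infty))=\tau(\infty)$.

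For part~(2), with $\mu(s)=Ch(s)\e^{-\theta s}$ one has $-\tfrac1k\log\mu(s)=\tfrac\theta k s-\tfrac1k\log h(s)-\tfrac1k\log C$, so $\Psi_k(t,s)$ in \eqref{forPSi2def} equals $\tfrac{\alpha^*}{k}t+\tfrac{b}{k}sG(t)+\tfrac{\theta}{k}s-\tfrac1k\log h(s)-\tfrac1k\log C-\log(1-\e^{-saG(t)})$, and I would verify the hypotheses of Lemma~\ref{lem-adapLapfitAge}: $\mu$ is twice differentiable because $h$ is; $\mu'/\mu=h'/h-\theta\to-\theta<0$, so $\mu$ is eventually strictly decreasing; writing $\rho=\e^{-saG(t)}/(1-\e^{-saG(t)})$, the system $\partial_t\Psi_k=\partial_s\Psi_k=0$ becomes
$$
\alpha^*G(t)=sg(t)\Bigl(\theta-\tfrac{h'(s)}{h(s)}\Bigr),\qquad
\rho=\frac{bG(t)+\theta-h'(s)/h(s)}{a\,k\,G(t)},
$$
which for $k$ large has a unique solution $(t_k,s_k)$ with $s_kaG(t_k)=\log k+O(1)$, hence $s_k\to\infty$, $g(t_k)\sim\alpha^*G(\infty)/(\theta s_k)\to0$, $t_k\to\infty$ and $G(t_k)\to G(\infty)$; moreover $\Psi_k(t,s)\to+\infty$ as $s\downarrow0$, $s\uparrow\infty$, $t\downarrow0$ or $t\uparrow\infty$, so $(t_k,s_k)$ is the global minimum; and expanding the entries of $kH_k(t_k,s_k)$, with $m_k:=k\rho_k\to\tfrac ba+\tfrac{\theta}{aG(\infty)}$ and $u_k:=s_kg(t_k)\to\alpha^*G(\infty)/\theta$, the leading quadratic terms cancel so that $\det\bigl(kH_k(t_k,s_k)\bigr)=a^3G(t_k)^2m_k^2u_k\bigl(-g'(t_k)/g(t_k)\bigr)(1+o(1))>0$, using $g'(t_k)<0$ (hypothesis~(3) of Lemma~\ref{Lem-adaptLap-age}, valid since $t_k\to\infty$). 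Feeding this into Proposition~\ref{prop-pkasym_fitage}: $\pr(\mathcal{N}_1\ge-t_k,\mathcal{N}_2\ge-s_k)\to1$ because $t_k,s_k\to\infty$; $\e^{-k\Psi_k(t_k,s_k)}=C\e^{-\alpha^* t_k}h(s_k)\,\e^{-(bG(t_k)+\theta)s_k}(1-\e^{-s_kaG(t_k)})^k$, and the saddle relations give $\e^{-(bG(t_k)+\theta)s_k}(1-\e^{-s_kaG(t_k)})^k\sim\mathrm{const}\cdot k^{-(b/a+\theta/(aG(\infty)))}$, which against the $k^{b/a-1}$ prefactor leaves exactly $k^{-(1+\theta/(aG(\infty)))}$, while $\det(kH_k)^{-1/2}$ produces the factor $(\tilde C-\alpha^* g'(t_k)/g(t_k))^{-1/2}$. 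This yields the asserted formula for $p_k$, with exponent $\tau=1+\theta/(aG(\infty))=\lim_{t\to\infty}\tau(t)$.

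The hard part is this last step: proving uniqueness of the saddle $(t_k,s_k)$, positive–definiteness of $H_k$, and above all matching every $k$–dependent factor. For general $g,h$ the quantities $\e^{-\alpha^* t_k}$ and $h(s_k)$ can themselves be subpolynomial in $k$ and cannot be simplified further, so the real content is that the Laplace–method output of Proposition~\ref{prop-pkasym_fitage} rearranges precisely into $\e^{-\alpha^* t_k}h(s_k)(\tilde C-\alpha^* g'(t_k)/g(t_k))^{-1/2}$ times the clean power $k^{-\tau}$; pinning down the residual constants — in particular $\tilde C$ — while keeping the error at $1+o(1)$ is delicate, and it is there that the additive slow variation of $h$ and the decay (and decreasingness) of $g$ are used repeatedly.
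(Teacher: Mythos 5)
Your proposal is essentially correct and, for part (2), follows the same route as the paper: verify the hypotheses of Lemma \ref{lem-adapLapfitAge} for $\mu(s)=Ch(s)\e^{-\theta s}$, extract $s_k aG(t_k)=\log k+O(1)$ and $s_kg(t_k)\to\alpha^*G(\infty)/\theta$ from the saddle equations, control the Hessian, and feed everything into Proposition \ref{prop-pkasym_fitage}; the identification of the clean power $k^{-(1+\theta/(aG(\infty)))}$ from $\e^{-(bG(t_k)+\theta)s_k}(1-\e^{-s_kaG(t_k)})^k$ against the prefactor $k^{b/a-1}$ is exactly the paper's computation. Where you genuinely diverge is in parts (1) and (3): the paper runs a second, one-dimensional Laplace method on $\psi_k(s)=\frac{bG(t)}{k}s-\frac1k\log\mu(s)-\log(1-\e^{-asG(t)})$ (finding a minimizer $s_k$ and expanding to second order), whereas you substitute $w=1-\e^{-asG}$ to expose a Beta-type integral $\int_0^1 h(-\log(1-w)/(aG))(1-w)^{\gamma(t)-1}w^k\,dw$ and apply a Watson-type estimate using the additive slow variation of $h$. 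Your route is cleaner and makes the exponent $\gamma(t)=b/a+\theta/(aG(t))$ transparent without any stationary-phase bookkeeping; the paper's route has the advantage of being uniform with the two-dimensional argument of part (2). One concrete slip: your displayed determinant
\[
\mathrm{det}\bigl(kH_k(t_k,s_k)\bigr)=a^3G(t_k)^2m_k^2u_k\Bigl(-\tfrac{g'(t_k)}{g(t_k)}\Bigr)(1+o(1))
\]
is only valid when $-g'(t_k)/g(t_k)\to\infty$. From the paper's Hessian entries, $(kH_k)_{1,1}=C_1-\alpha^*g'(t_k)/g(t_k)+o(1)$ with $C_1>0$ and $(kH_k)_{2,2}\to C_2>0$, so when $g'/g$ stays bounded (e.g.\ $g(t)=\e^{-\lambda t}$) the constant contribution $C_1C_2$ is of the same order as the $-g'/g$ term and must be kept; this is precisely why the final factor reads $(\tilde C-\alpha^*g'(t_k)/g(t_k))^{-1/2}$ with a nonzero $\tilde C$. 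Your final formula is correct, but the intermediate determinant asymptotics should be stated as $C_2\bigl(C_1-\alpha^*g'(t_k)/g(t_k)\bigr)(1+o(1))$. Also note that uniqueness of $(t_k,s_k)$ and positive definiteness of $H_k$ are hypotheses of Lemma \ref{lem-adapLapfitAge} (and the Proposition's statement), so the paper — like you — only verifies them on the computed entries rather than proving them from scratch.
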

By \eqref{for-varpYcond} it is necessary to consider the exponential rate $\theta>aG(\infty)$ to obtain a non-explosive process. In particular, this implies that, for every $t\geq0$, $\tau(t)$,  as well as $\tau$, are  strictly larger than $2$. As a consequence, the three distributions $(P_k[M](t))_{k\in\N}$, $(p_k)_{k\in\N}$ and $(q_k)_{k\in\N}$ have finite first moment. Increasing the value of $\theta$ leads to power-law distributions with exponent larger than $3$, so with finite variance. 

A second observation is that, independently of the aging function $g$, the point $s_k$ is of order $\log k$. In particular, this has two consequences. First the correction to the power law given by $h(s_k)$ is a power of $\log k$. Since $h'(s)/h(s)\rightarrow0$ as $s\rightarrow\infty$.  Second the power-law term $k^{-(1+\theta/(aG(\infty)))}$ arises from $\mu(s_k)$. This means that the exponential term in the fitness distribution $\mu$ not only is necessary to obtain a non-explosive process, but also generates the power law.

The third observation is that the behavior of the three distributions $(P_k[M](t))_{k\in\N}$, $(p_k)_{k\in\N}$ and $(q_k)_{k\in\N}$ depends on the integrability of the aging function, {\em but does only marginally depends on its precise shape}. 
The contribution of the aging function $g$ to the exponent of the power law in fact is given only by the value $G(\infty)$. The other terms that depend directly on the shape of $g$ are $\e^{-\alpha ^* t_k}$ and the ratio $g'(t_k)/g(t_k)$. 
The ratio $g'/g$ does not contribute for any function $g$ whose decay is in between power law and exponential. The term $\e^{-\alpha ^* t_k}$ depends on the behavior of $t_k$, that can be seen as roughly  $g^{-1}(1/\log k)$. For any function between power law and exponential, $\e^{-\alpha^* t_k}$ is asymptotic to a power of $\log k$.

The last observation is that every distribution in the general exponential class shows a {\em dynamical power law} as for the pure exponential distribution, as shown in Section \ref{sec-expfitness}. The pure exponential distribution is a special case where we consider $h(s)\equiv 1$. Interesting is the fact that $\tau$ actually does not depend on the choice of $h(s)$, but only on the exponential rate $\theta>aG(\infty)$. In particular, Proposition  \ref{prop-expfit_general} proves that the limiting degree distribution of the two examples in Figure \ref{fig-pwrlwtime} have power-law decay.

We move to the class of sub-exponential fitness. We show that the power law is lost due to the absence of a pure exponential term. We prove the result using densities of the form
\eqn{
\label{def-subfit-2}
	\mu(s) = C\e^{-s^{1+\varepsilon}},
}
for $\varepsilon>0$ and $C$ the normalization constant.
 The result is the following:

\begin{Proposition}
\label{prop-subexpfit}
Consider a sub-exponential fitness distribution as in \eqref{def-subfit-2}. Let $(M_t)_{t\geq0}$ be the corresponding birth process. 
Denote the minimum point of $\Psi_k(t,s)$ as in \eqref{forPSi2def} by $(t_k,s_k)$. Then
\begin{enumerate}
	\item for every $t\geq0$, $M_t$ satisfies
	$$
		\pr\left(M_t=k\right) = k^{-1}(\log k)^{-\varepsilon/2}\e^{-\frac{\theta}{(aG(t))^{1+\varepsilon}} (\log k)^{1+\varepsilon}}(1+o(1));
	$$
	\item the limiting degree distribution $(p_k)_{k\in\N}$ of the $\CTBP$ has asymptotic behavior given by
	$$
		p_k=  \e^{-\alpha^* t_k}k^{-1}\left(C_1-s_k^\varepsilon\frac{g'(t_k)}{g(t_k)}\right)\e^{-\frac{\theta}{(aG(\infty))^{1+\varepsilon}} (\log k)^{1+\varepsilon}}(1+o(1));
	$$
	\item the distribution $(q_k)_{k\in\N}$ of the total number of children of a fixed individual satisfies
	$$
			q_k = k^{-1}(\log k)^{-\varepsilon/2}\e^{-\frac{\theta}{(aG(\infty))^{1+\varepsilon}} (\log k)^{1+\varepsilon}}(1+o(1)).
	$$
\end{enumerate}
\end{Proposition}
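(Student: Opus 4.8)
The plan is to turn all three sequences into Laplace-type integrals and analyse them by a saddle-point method. Since $M_t=V_{YG(t)}$ with $(V_t)_{t\ge0}$ the affine stationary process $f_k=ak+b$, the explicit probabilities \eqref{for-Pktstat} give, with $\mu(s)=C\e^{-\theta s^{1+\varepsilon}}$,
\[
\pr(M_t=k)=\frac{\Gamma(k+b/a)}{\Gamma(b/a)\Gamma(k+1)}\int_0^\infty \mu(s)\,\e^{-bsG(t)}\bigl(1-\e^{-asG(t)}\bigr)^{k}\,ds ,
\]
while $q_k$ is the same expression with $G(t)$ replaced by $G(\infty)$, and $p_k$ is the two-dimensional integral \eqref{for-pkagefit_integ}. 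Hence (1) and (3) are one-dimensional integrals in the fitness variable with the aging variable frozen, and (2) is precisely the integral treated by Proposition~\ref{prop-pkasym_fitage}; the only preliminary point for (2) is to check the hypotheses of Lemma~\ref{lem-adapLapfitAge} for the present $\mu$, namely that it is twice differentiable and eventually decreasing, which is immediate.

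\textbf{Locating and evaluating the saddle.} Write the exponent (up to the harmless $\tfrac1k\log C$) as in \eqref{forPSi2def}, $\Psi_k(t,s)=\tfrac{\alpha^*}{k}t+\tfrac bk sG(t)+\tfrac\theta k s^{1+\varepsilon}-\log\bigl(1-\e^{-asG(t)}\bigr)$. I would first locate $s_k$ from $\partial_s\Psi_k=0$: this balances the sub-exponential term $\tfrac{\theta(1+\varepsilon)}{k}s^{\varepsilon}$ (from $-\tfrac1k\log\mu$) against $aG(t)\e^{-asG(t)}/(1-\e^{-asG(t)})$, and since the latter is exponentially small the balance gives $as_kG(t)=\log k-\varepsilon\log s_k+O(1)$, i.e.\ $s_k=\tfrac{\log k}{aG(t)}(1+o(1))$. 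In the genuinely two-dimensional case, dividing $\partial_t\Psi_k=0$ by $\partial_s\Psi_k=0$ yields $\alpha^*=\theta(1+\varepsilon)s_k^{1+\varepsilon}g(t_k)/G(t_k)$, so $g(t_k)\sim c\,(\log k)^{-(1+\varepsilon)}$; hence $t_k\to\infty$, $G(t_k)\to G(\infty)$, $s_k\sim\tfrac{\log k}{aG(\infty)}$ and $t_k\approx g^{-1}\bigl(c\,(\log k)^{-(1+\varepsilon)}\bigr)$. Evaluating at the saddle: the factor $\e^{-bs_kG}$ produces $k^{-b/a}$, which combines with the prefactor $\Gamma(k+b/a)/(\Gamma(b/a)\Gamma(k+1))\sim k^{b/a-1}/\Gamma(b/a)$ into the overall $k^{-1}$; the factor $\e^{-\theta s_k^{1+\varepsilon}}$ produces the stretched exponential $\exp\bigl(-\theta(\log k)^{1+\varepsilon}/(aG)^{1+\varepsilon}\bigr)$, with $G=G(t)$ in (1) and $G=G(\infty)$ in (2), (3); and $\e^{-\alpha^* t_k}$ survives as a factor in (2). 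This gives the dynamical power law (1) for $\pr(M_t=k)$ and, with $G(\infty)$, the lifetime-degree statement (3).

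\textbf{Width of the saddle and assembly.} Next I would compute the curvature at the saddle. The second $s$-derivative of $\Psi_k$ at $s_k$ is dominated by the contribution of $-\log(1-\e^{-asG})$, of order $s_k^{\varepsilon}/k$ (the $-\tfrac1k\log\mu$ contribution being only of order $s_k^{\varepsilon-1}/k$), so the Gaussian width in the $s$-variable is of order $(\log k)^{-\varepsilon/2}$, which is the source of the $(\log k)^{-\varepsilon/2}$ factor in (1) and (3). Since $s_k\to\infty$ while this width tends to $0$, the minimiser is deep in the interior, the boundary at $s=0$ is negligible, and a version of the Laplace method \eqref{for-laplaceTheor} uniform in $k$ applies; the resulting convergent boundary/normalisation constant together with $\Gamma$-asymptotics fixes the constant $1$ in (1), (3). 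For (2), I would then apply Proposition~\ref{prop-pkasym_fitage}: computing the Hessian $H_k(t_k,s_k)$ one finds the $ss$-entry $\asymp s_k^{\varepsilon}/k$, the $tt$-entry $\asymp -\alpha^* g'(t_k)/(k\,g(t_k))>0$, and the cross-entry $O(1/k)$, while $\pr(\mathcal{N}_1\ge -t_k,\mathcal{N}_2\ge -s_k)\to1$; simplifying $\det\bigl(kH_k(t_k,s_k)\bigr)$ converts the factor $2\pi/\sqrt{\det(kH_k)}$ of Proposition~\ref{prop-pkasym_fitage} into $(C_1-s_k^{\varepsilon}g'(t_k)/g(t_k))$, giving (2).

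\textbf{Main obstacle.} The hard part will be the two-dimensional saddle analysis underlying (2): proving existence and uniqueness of the joint minimiser $(t_k,s_k)$ of $\Psi_k$ and, above all, controlling the Hessian, which degenerates as $k\to\infty$ because the curvature in the $t$-direction collapses (it is $\asymp -g'(t_k)/g(t_k)\to0$) while the curvature in the $s$-direction blows up (it is $\asymp s_k^{\varepsilon}\to\infty$). Thus the classical Laplace method is inapplicable, one must use the adapted version in Lemma~\ref{lem-adapLapfitAge}, and one has to argue carefully that the bounded cross-curvature does not destroy positivity of the relevant quadratic form on the region carrying the integral; tracking the slow divergence of $t_k$ (essentially $g^{-1}$ at an argument of size $(\log k)^{-(1+\varepsilon)}$) is needed to pin down the correction factors and the Gaussian tail probability. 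By contrast, the one-dimensional integrals for (1) and (3) are routine, since there the curvature at the saddle grows and the minimiser stays well inside the domain, so only a standard uniform Laplace estimate is required.
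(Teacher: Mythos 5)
Your proposal follows essentially the same route as the paper's proof: one-dimensional Laplace analysis in the fitness variable for (1) and (3) with the saddle $s_k\sim\log k/(aG)$, curvature of order $s_k^{\varepsilon}/k$ producing the $(\log k)^{-\varepsilon/2}$ factor and $\mu(s_k)$ producing the stretched exponential, and an application of Proposition \ref{prop-pkasym_fitage} with the relations $g(t_k)\asymp(\log k)^{-(1+\varepsilon)}$ and $\det(kH_k)\sim C_1-s_k^{\varepsilon}g'(t_k)/g(t_k)$ for (2). The computations you sketch (saddle location, ratio of the two stationarity equations, Hessian asymptotics) coincide with those carried out in Appendices B.2 and C.2 of the paper, so the proposal is correct and not materially different from the published argument.
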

In Proposition \ref{prop-subexpfit} the distributions $(P_k[M](t))_{k\in\N}$, $(p_k)_{k\in\N}$ and $(q_k)_{k\in\N}$ decay faster than a power law. This is due to the fact that a sub-exponential tail for the fitness distribution does not allow the presence of sufficiently many individuals in the branching population whose fitness value is sufficiently high to restore the power law.

In this case, we have that $s_k$ is roughly $c_1\log k-c_2\log\log k$. Hence, as first approximation, $s_k$ is still of logarithmic order. 
The power-law term is lost because there is no pure exponential term in the distribution $\mu$. In fact, in this case $\mu(s_k)$ generates the dominant term $\e^{-\theta (\log k)^{1+\varepsilon}}$.

\subsection{The case of exponentially distributed fitness: Proof of Corollary \ref{th-degexpfitness}}
\label{sec-expfitness}
The case when the fitness $Y$ is exponentially distributed turns out to be simpler. In this section, denote the fitness by $T_\theta$, where $\theta$ is the parameter of the exponential distribution. First of all, we investigate the Laplace transform of the process. In fact, we can write
$$
	\E\left[M_{T_\alpha}\right] = \int_0^\infty\theta\e^{-\theta s}\E\left[V_{sG(T_\alpha)}\right]ds,
$$
which is the Laplace transform of the stationary process $(V_{sG(T_\alpha)})_{s\geq0}$ with bounded fitness $G(T_\alpha)$ in $\theta$. As a consequence,
$$
	\E\left[M_{T_\alpha}\right] = \sum_{k\in\N}\E\left[\prod_{i=0}^{k-1}\frac{f_iG(T_\alpha)}{\theta+f_iG(T_\alpha)}\right].
$$
Suppose that there exists a Malthusian parameter $\alpha^*$. This means that, for fixed $(f_k)_{k\in\N}$, $g$ and $\theta$, $\alpha^*$ is the unique value such that $\E\left[M_{T_{\alpha^*}}\right]=1$. As a consequence, if we fix $(f_k)_{k\in\N}$, $g$ and $\alpha^*$, $\theta$ is the unique value such that 
$$
	\sum_{k\in\N}\E\left[\prod_{i=0}^{k-1}\frac{f_iG(T_\alpha)}{\theta+f_iG(T_\alpha)}\right]=1.
$$
Therefore $\theta$ is the Malthusian parameter of the process $(V_{sG(T_\alpha)})_{s\geq0}$. We are now ready to prove Corollary \ref{th-degexpfitness}:
\medskip

%\begin{Proposition}
%\label{prop-exppowerlaw}
%For every $t\geq0$ and $k\in\N$,
%$$
%	P_k[M](t) = \pr\left(M_t = k\right) = \frac{\theta}{\theta+f_kG(t)}\prod_{i=0}^{k-1}\frac{f_iG(t)}{\theta+f_iG(t)}.
%$$
%As a consequence, the lifetime distribution $(q_k)_{k\in\N}$ of the process $(M_t)_{t\geq0}$ is given by
%$$
%	q_k = \frac{\theta}{\theta+f_kG(\infty)}\prod_{i=0}^{k-1}\frac{f_iG(\infty)}{\theta+f_iG(\infty)}.
%$$
%Denote the Malthusian parameter of the process by $\alpha^*$, then the limiting degree distribution of the corresponding $\CTBP$ is given by
%$$
%	p_k = \E\left[\frac{\theta}{\theta+f_kG(T_{\alpha^*})}\prod_{i=0}^{k-1}\frac{f_iG(T_{\alpha^*})}{\theta+f_iG(T_{\alpha^*})}\right].
%$$
%\end{Proposition}

\begin{proof}[Proof of Corollary \ref{th-degexpfitness}]
We can write $\pr\left(M_t = k\right) = \pr\left(V_{T_\theta G(t)}=k\right)$, which means that we have to evaluate the Laplace transform of $\pr\left(V_{sG(t)}=k\right)$ in $\theta$. Using \eqref{for-Pktstat} the first part follows immediately by simple calculations. For the second part, we just need to take the limit as $t\rightarrow\infty$. For the sequence $(p_k)_{k\in\N}$, the result is immediate since $p_k = \E[P_k[M](T_{\alpha^*})]$.
\end{proof}
\medskip

%\subsection{Affine case with exponential fitness}
The case of affine PA weights $f_k = ak+b$ is particularly nice. As already mentioned in Section \ref{sec-mainres}, the process  $(M_t)_{t\geq0}$ has a power-law distribution at every $t\in\R^+$ and \eqref{pkt-formula} follows immediately.
%\eqn{
%	\begin{split}
%		P_k[M](t) = \pr\left(M_t=k\right) & = \frac{\theta}{\theta+f_kG(t)}\prod_{i=0}^{k-1}\frac{f_iG(t)}{\theta+f_iG(t)}\\
%		& = \frac{\theta}{aG(t)}\frac{\Gamma((b+\theta)/(b/aG(t))}{\Gamma(aG(t))}\frac{\Gamma(k+b/(aG(t)))}{\Gamma(k+b/(aG(t))+ 1+\theta/(aG(t)))},
%	\end{split}
%}
%which is a power-law distribution with exponent
%$$
%	\tau(t) = 1+\frac{\theta}{aG(t)}\stackrel{t\rightarrow\infty}{\longrightarrow}\tau(\infty) = 1+\frac{\theta}{aG(\infty)}.
%$$
%Since $G(t)\rightarrow G(\infty)$ as $t\rightarrow\infty$, what we have is that as time increases the exponent of the power-law decreases to the limit. Since $\theta>aG(\infty)$, the limit exponent is actually larger than 2, which means that $\E[M_\infty]<\infty$. Thus the expected number of children generated by a fixed individual in his lifetime is finite. 
Further, \eqref{for-expfit-pk} and \eqref{exp-fitness-entire} follow directly.
\qed

%Such distribution $(q_k)_{k\in\N}$ is given by
%\eqan{
%	q_k &= \theta/(aG(\infty)))\frac{\Gamma(\theta/(aG(\infty))+b/(aG(\infty))}{\Gamma(b/(aG(\infty)))}\frac{\Gamma(k+b/(aG(\infty)))}{\Gamma(k+b/(aG(\infty))+1+\theta/(aG(\infty)))}\nn\\
%	&\sim k^{-(1+\theta/(aG(\infty)))}
%,}
%which is a distribution with power-law tail with exponent $\tau(\infty) = 1+\theta/(aG(\infty))$.
%
%In conclusion of this section, we make form of the limiting degree distribution $(p_k)_{k\in\N}$
%of the branching process explicit, which is
%\eqn{
%\label{for-expfit-pk}
%	p_k = \E\left[\theta/(aG(T_{\alpha^*})))\frac{\Gamma(\theta/(aG(T_{\alpha^*}))+b/(aG(T_{\alpha^*}))}{\Gamma(b/(aG(T_{\alpha^*})))}\frac{\Gamma(k+b/(aG(T_{\alpha^*})))}{\Gamma(k+b/(aG(T_{\alpha^*}))+1+\theta/(aG(T_{\alpha^*})))}\right].
%}

\appendix

\section{Limiting distribution with aging effect, no fitness}
In this section, we analyze the limiting degree distribution $(p_k)_{k\in\N}$ of CTBPs with aging but no fitness. In Section \ref{sec-app-age} we prove the adapted Laplace method for the general asymptotic behavior of $p_k$. In Section \ref{sec-examples-age} we consider some examples of aging function $g$, giving the asymptotics for the corresponding distributions.

\subsection{Proofs of Lemma \ref{Lem-adaptLap-age} and Proposition \ref{prop-pkage_asym}}
\label{sec-app-age}

\begin{proof}[Proof of Lemma \ref{Lem-adaptLap-age}]
First of all, we show that $t_k$ is actually a minimum. In fact,
$$
	\lim_{t\rightarrow 0}\frac{d}{dt}\Psi_k(t) = -\infty,\quad\quad \mbox{and}\quad \lim_{t\rightarrow\infty}\frac{d}{dt}\Psi_k(t) = \frac{\alpha}{k}>0.
$$
As a consequence, $t_k$ is a minimum. Then,
\eqn{
\label{for-gtkasymp}
	\lim_{k\rightarrow\infty}g(t_k)\left(\frac{\alpha^*}{k}\frac{1-\e^{-aG(\infty)}}{a\e^{-aG(\infty)}}\right)^{-1} = \lim_{k\rightarrow\infty}g(t_k)\frac{ak}{\alpha^*(\e^{aG(\infty)}-1)}= 1.
}
In particular, $g(t_k)$ is of order $1/k$. Then, since $t_k$ is the actual minimum,  and $g$ is monotonically decreasing for $t\geq B$,
\eqn{
\label{for-Phi2}
	\Psi_k''(t_k)  = \frac{b}{k}g'(t_k)+g(t_k)^2 \frac{a^2\e^{-aG(t_k)}(2-\e^{-aG(t_k)})}{(1-\e^{-aG(t_k)})^2}-g'(t_k)\frac{a\e^{-aG(t_k)}}{1-\e^{-aG(t_k)}}>0.
}
We use the fact that we are evaluating the second derivative in the point $t_k$ where the first derivative is zero. This means
$$
	g(t_k)\frac{a\e^{-aG(t_k)})}{1-\e^{-aG(t_k)}}= \frac{\alpha}{k}+\frac{b}{k}g(t_k).
$$
We use this in \eqref{for-Phi2} to obtain
\eqn{
\label{for-Phi3}
	\begin{split}
		k\Psi_k''(t_k)  & = bg'(t_k)+g(t_k)\frac{a(2-\e^{-aG(t_k)})}{1-\e^{-aG(t_k)}}\left(\alpha+bg(t_k)\right)-\frac{g'(t_k)}{g(t_k)}\left(\alpha+bg(t_k)\right)\\
		& = g(t_k)\frac{a(2-\e^{-aG(t_k)})}{1-\e^{-aG(t_k)}}\left(\alpha+bg(t_k)\right)-\alpha\frac{g'(t_k)}{g(t_k)}.
	\end{split}
}
Now, we use Taylor expansion around $t_k$ of $\Psi_k(t)$ in the integral in \eqref{forIkdef}. Since we use the expansion around $t_k$, which is the minimum of $\Psi_k(t)$, the first derivative of $\Psi_k$ is zero. As a consequence, we have
$$
	I(k) = \int_0^\infty \e^{-k\left(\Psi_k(t_k)+\frac{1}{2}\Psi_k''(t_k)(t-t_k)^2+o((t-t_k)^2)\right)}dt.
$$
 First of all, notice that the contribution of the terms with $|t-t_k|\gg 1$ is negligible. In fact, we have
\eqn{
	\e^{-k\Psi_k(t)}\leq \e^{-\alpha^* t}(1-\e^{-aG(\infty)})^k,
}
which means that such terms are exponentially small, so we can ignore them. 
Now we make a change of variable $u = t-t_k$. Then
$$
	I(k) = \int_{-t_k}^\infty \e^{-k\left(\Psi_k(t_k)+\frac{1}{2}\Psi_k''(t_k)u^2+o(u^2)\right)}du.
$$
In particular, since the term $ \e^{-k\Psi_k(t_k)}$ does not depend on $u$, we can write
$$
	I(k) = \e^{-k\Psi_k(t_k)}\int_{-t_k}^\infty \e^{-k\left(\frac{1}{2}\Psi_k''(t_k)u^2+o(u^2)\right)}du.
$$
We use the notation $k\Psi_k(t_k)= \frac{1}{\sigma_k^2}$, which means we can rewrite the integral as
$$
	\e^{-k \Psi_k(t_k)}\sqrt{2\pi\sigma_k^2}\int_{-\infty}^{t_k}\frac{1}{\sqrt{2\pi\sigma_k^2}}\e^{-\frac{u^2}{2\sigma_k^2}}dt = \e^{-k\Psi_k(t_k)}\sqrt{2\pi\sigma_k^2}\pr\left(\mathcal{N}(0,\sigma_k^2)\leq t_k\right).
$$
Since the distribution $\mathcal{N}(0,\sigma_k^2)$ is symmetric with respect to $0$, for every $k\in\N$,
\eqn{
\label{for-prNterm}
	\pr\left(\mathcal{N}(0,\sigma_k^2)\leq t_k\right) = \frac{1}{2}\left[1+\frac{1}{\sqrt{\pi}}\int_{-t_k/\sigma_k}^{t_k/\sigma_k}\e^{-\frac{u^2}{2}}du\right].
}
The behavior of the above integral depends on the ratio $t_k/\sigma_k$, which is bounded between $0$ and $1$. As a consequence, the term $\pr\left(\mathcal{N}(0,\sigma_k^2)\leq t_k\right)$ is bounded between $1/2$ and $1$. 
\end{proof}

Using Lemma \ref{Lem-adaptLap-age}, we can prove Proposition \ref{prop-pkage_asym}:
\begin{proof}[Proof of Proposition \ref{prop-pkage_asym}]
Recall that $\sigma^2_k = (k\Psi(t_k)'')^{-1}$. Using \eqref{for-Phi3}, the fact that $g$ is bounded almost everywhere, and $g'(t_k)<0$, we can write
\eqn{
\label{for-var}
	k\Psi(t_k)''= \alpha\left(\frac{a(2-\e^{-aG(\infty)})}{1-\e^{-aG(\infty)}}g(t_k)-\frac{g'(t_k)}{g(t_k)}\right)(1+o(1)).
}
Notice that in \eqref{for-var} the terms $g(t_k)-\frac{g'(t_k)}{g(t_k)}$ are always strictly positive, since $g(t)$ is decreasing and $t_k\rightarrow\infty$ as $k\rightarrow\infty$.
As a consequence, we can replace the term $\sqrt{2\pi/\sigma^2_k}$ by $\left(Cg(t_k)-\frac{g'(t_k)}{g(t_k)}\right)^{1/2}$, for $C=\frac{a(2-\e^{-aG(\infty)})}{1-\e^{-aG(\infty)}}$. We also have that
$$
	\e^{-k\Psi_k(t_k)} = \mathrm{exp}\left[-\alpha^*t_k -bG(t_k)+k\log\left(1-\e^{-aG(t_k)}\right)\right]= \e^{-\alpha^*t_k}(1-\e^{-aG(\infty)})^k(1+o(1)),
$$
since  $G(t_k)$ converges to $G(\infty)$. For the term in \eqref{for-prNterm}, it is easy to show that it is asymptotic to $D_k(g)$. This completes the proof.
\end{proof}

\subsection{Examples of aging functions}
\label{sec-examples-age}
In this section, we analyze two examples of aging functions, in order to give examples of the limiting degree distribution of the branching process. We consider affine weights $f_k = ak+b$, and three different aging functions:
$$
	g(t) = \e^{-\lambda t},  \quad\quad  g(t) = (1+t)^{-\lambda}, \quad \quad\mbox{and}\quad \quad
	g(t) = \lambda_1\e^{-\lambda_2(\log (t+1)-\lambda_3)^2}.
$$
We assume that in every case the aging function $g$ is integrable, so we consider $\lambda>0$ for the exponential case, $\lambda>1$ for the power-law case and $\lambda_1,\lambda_2,\lambda_3>0$ for the lognormal case. We assume that $g$ satisfies Condition \eqref{cond-eg_inf1} in order to have a supercritical process. 

We now apply \eqref{for-pkage_asym} to these three examples, giving their asymptotics. In general, we approximate $t_k$ with the solution of, for $c_1 = \frac{a\e^{-aG(\infty)}}{1-\e^{-aG(\infty)}}$,
\eqn{
\label{for-tkappr}
	\frac{\alpha^*}{k}+\frac{b}{k}g(t)-c_1g(t) =0.
}
We start considering the exponential case $g(t) = \e^{-\lambda t}$. In this case, from \eqref{for-tkappr} we obtain that, ignoring constants,
\eqn{
\label{for-expage-1}
	t_k = \log k(1+o(1)).
}
As we expected, $t_k\rightarrow\infty$. We now use \eqref{for-var}, which gives a bound on $\sigma_k^2$ in \eqref{for-gtkasymp} in terms of $g$ and its derivatives. As a consequence,
$$
	\left(g(t_k)-\frac{g'(t_k)}{g(t_k)}\right)^{-1/2} = \left(\e^{-\lambda t_k}+\lambda\right)^{1/2}\sim \lambda^{1/2}(1+o(1)).
$$
Looking at $\e^{-k\Psi_k(t_k)}$, it is easy to compute that, with $t_k$ as in \eqref{for-expage-1},
$$
	\mathrm{exp}\left[-\alpha^* \log k+ bG(t_k)+k\log (1-\e^{-aG(t_k)})\right]= k^{-\alpha^*}(1+o(1)).
$$
Since $t_k/\sigma_k\rightarrow\infty$, then $\pr\left(\mathcal{N}(0,\sigma_k^2)\leq t_k\right)\rightarrow 1$, so that
$$
	p_k =\frac{\Gamma(k+b/a)}{\Gamma(b/a)}\frac{1}{\Gamma(k+1)}C_1 k^{-\alpha^*}\e^{-C_2 k}(1+o(1)),
$$
which means that $p_k$ has an exponential tail with power-law corrections.

We now apply the same result to the power-law aging function, so $g(t) = (1+t)^{-\lambda}$, and $G(t) = \frac{1}{\lambda-1}(1+t)^{1-\lambda}$. In this case
$$
	(1+t_k) = \left(\frac{\alpha^*}{c_1k}\right)^{-1/\lambda}(1+o(1)).
$$
We use again \eqref{for-gtkasymp}, so
$$
	\left(g(t_k)-\frac{g'(t_k)}{g(t_k)}\right) = \left(\frac{\alpha^*}{c_1k}+\lambda\left(\frac{c_1k}{\alpha^*}\right)^{1/\lambda}\right)^{1/2}\sim k^{\alpha^*/2\lambda}(1+o(1)).
$$
In conclusion,
$$
	p_k = \frac{\Gamma(k+b/a)}{\Gamma(b/a)}\frac{1}{\Gamma(k+1)}k^{\alpha^*/2\lambda}\e^{-\alpha^*\left(\frac{\alpha^*}{c_1k}\right)^{-1/\lambda}-C_2k}(1+o(1)),
$$
which means that also in this case we have a power-law with exponential truncation.

In the case of the lognormal aging function, \eqref{for-tkappr} implies that
$$
	[\log(t_k+1)-\lambda_3]^2 \approx  +\frac{1}{\lambda_2}\log\left(\frac{c_1}{\alpha^*}k\right).
$$
By \eqref{for-gtkasymp} we can say that
$$
	\left(g(t_k)-\frac{g'(t_k)}{g(t_k)}\right)=\left(\lambda_1\log\left(\frac{c_1}{\alpha^*}k\right)+
		2\lambda_2\frac{\log(t_k+1)}{t_k+1}\right)(1+o(1))= \lambda_1\log\left(\frac{c_1}{\alpha^*}k\right)(1+o(1)).
$$
We conclude then, for some constant $C_3>0$, 
$$
	p_k =  \frac{\Gamma(k+b/a)}{\Gamma(b/a)}\frac{1}{\Gamma(k+1)}\left(\lambda_1\log\left(\frac{c_1}{\alpha^*}k\right)\right)^{1/2}\e^{-\alpha^* \e^{(\log (\frac{c_1}{\alpha^*}k))^{1/2}}} \e^{-C_3 k}(1+o(1)).
$$

\section{Limiting distribution with aging and fitness}
\label{sec-appendix}
In this section, we consider birth processes with aging and fitness. We prove Lemma \ref{lem-adapLapfitAge}, used in the proof of Proposition \ref{prop-pkasym_fitage}. Then we give examples of limiting degree distributions for different aging functions and exponentially distributed fitness. 

\subsection{Proofs of Lemma \ref{lem-adapLapfitAge} and Proposition \ref{prop-pkasym_fitage}}
\label{sec-app-agefit}

\begin{proof}[Proof of Lemma \ref{lem-adapLapfitAge}]
We use again second order Taylor expansion of the function $\Psi_k(t,s)$ centered in $(t_k,s_k)$, where the first order partial derivatives are zero. As a consequence we write
$$
	\mathrm{exp}\left[-k\Psi_k(t,s)\right]  = 
\mathrm{exp}\left[- k\Psi_k(t_k,s_k)+\frac{1}{2}\sub{x}^T \left(kH_k(t_k,s_k)\right)\sub{x}+o(||\sub{x}||^2)\right] ,
$$
where 
$$
	\sub{x} = \left[\begin{array}{c} t-t_k\\ s-s_k\end{array}\right],\quad\quad \mbox{and}
	\quad \quad 
	H_k(t_k,s_k) = \left[
	\begin{array}{cc}
		\displaystyle\frac{\de^2\Psi_k}{\de t^2}(t_k,s_k) & \displaystyle\frac{\de^2\Psi_k}{\de s\de t} (t_k,s_k) \\
		& \\
		\displaystyle\frac{\de^2\Psi_k}{\de s\de t} (t_k,s_k) & \displaystyle\frac{\de^2\Psi_k}{\de s^2}(t_k,s_k)
	\end{array}\right].
$$
As for the proof of Lemma \ref{Lem-adaptLap-age}, we start by showing that we can ignore the terms where $||\sub{x}||^2\gg1$. In fact, 
$$
	\e^{-k\Psi_k(t,s)} \leq \mathrm{exp}\left(-\alpha^*t-bsG(t)+k\log(\mu(s))\right).
$$
Since $\mu$ is a probability density, $\mu(s)<1$ for $s\gg 1$. As a consequence, $\log(\mu(s))<0$, which means that the above bound is exponentially decreasing whenever $t$ and $s$ are very large. As a consequence, we can ignore the contribution given by the terms where $|t-t_k|\gg1$ and $|s-s_k|\gg1$.

The term $\e^{- k\Psi_k(t_k,s_k)}$ is independent of $t$ and $s$, so we do not consider it in the integral. Writing $u= t-t_k$ and $v= s-s_k$, we can write
$$
	\int_{\R^+\times\R^+}\e^{- \frac{1}{2}\sub{x}^T (kH_k(t_k,s_k))\sub{x}}dsdt = \int_{-t_k}^\infty\int_{-s_k}^\infty 
	\e^{- \frac{1}{2}\sub{y}^T (kH_k(t_k,s_k))\sub{y}}dudv,
$$
where this time $\sub{y}^T = [u~ v]$. As a consequence, 
\eqn{
\label{for-I_kasympFit}
	\int_{-t_k}^\infty\int_{-s_k}^\infty 
		\e^{- \frac{1}{2}\sub{y}^T (kH_k(t_k,s_k))\sub{y}}dudv = \frac{2\pi}{\sqrt{\mathrm{det}(kH_k(t_k,s_k))}}
		\pr\left(\mathcal{N}_1(k)\geq -t_k,\mathcal{N}_2(k)\geq -s_k\right),
}
provided that the covariance matrix $(kH_k(t_k,s_k))^{-1}$ is positive definite. 
\end{proof}
As a consequence, we can use \eqref{for-I_kasympFit} to obtain that, for the corresponding limiting degree distribution of the branching process $(p_k)_{k\in\N}$, as $k\rightarrow\infty$, 
$$
	p_k=\frac{\Gamma(k+b/a)}{\Gamma(b/a)}\frac{1}{\Gamma(k+1)}\e^{- k\Psi_k(t_k,s_k)}\frac{2\pi}{\sqrt{\mathrm{det}(kH_k(t_k,s_k))}}
		\pr\left(\mathcal{N}_1(k)\geq -t_k,\mathcal{N}_2(k)\geq -s_k\right)(1+o(1)).
$$
This results holds if the point $(t_k,s_k)$ is the absolute minimum of $\Psi_k$, and the Hessian matrix is positive definite at $(t_k,s_k)$. 

\subsection{The Hessian matrix of $\Psi_k(t,s)$}
\label{sec-app-hessian}
First of all, we need to find a point $(t_k,s_k)$ which is the solution of the system
\begin{eqnarray}
	&\displaystyle\frac{\de \Psi_k}{\de t} &= \displaystyle \frac{\alpha^*}{k}+\frac{b}{k}sg(t)-\frac{sag(t)\e^{-saG(t)}}{1-\e^{-saG(t)}} = 0  \label{for-psipart_t},\\
	&\displaystyle\frac{\de \Psi_k}{\de s} &=\displaystyle \frac{b}{k}G(t)-\frac{1}{k}\frac{\mu'(s)}{\mu(s)}-\frac{aG(t)\e^{-saG(t)}}{1-\e^{-saG(t)}}=0  \label{for-psipart_s}.
\end{eqnarray}
Denote the solution by $(t_k,s_k)$. Then
\eqn{
	\begin{split}
		\frac{\de^2\Psi_k}{\de t^2} & = \frac{b}{k}sg'(t_k)+g(t_k)^2 \frac{s^2a^2\e^{-asG(t_k)}}{(1-\e^{-asG(t_k)})^2}-g'(t_k)\frac{as\e^{-asG(t_k)}}{1-\e^{-asG(t_k)}},\\
		\frac{\de^2\Psi_k}{\de s^2} & = -\frac{1}{k}\frac{\mu''(s)\mu(s)-\mu'(s)^2}{\mu(s)^2}+
			\frac{a^2G(t)^2\e^{-saG(t)}}{(1-\e^{-saG(t)})^2},\\
		\frac{\de^2\Psi_k}{\de s\de t} & = \frac{b}{k}g(t_k) +\left(1-\frac{1}{as_k}\right)\left(\frac{b}{k}G(t_k)-\frac{1}{k}\frac{\mu'(s_k)}{\mu(s_k)}\right)\left(\frac{\alpha^*}{k}+\frac{b}{k}s_kg(t_k)\right).
	\end{split}
}
From \eqref{for-psipart_t} and \eqref{for-psipart_s} we know 
\eqn{
\label{for-substit}
\begin{split}
	\frac{\alpha^*}{k}+\frac{b}{k}s_kg(t_k) & = \frac{s_kag(t_k)\e^{-s_kaG(t_k)}}{1-\e^{-s_kaG(t_k)}},\\
	\frac{b}{k}G(t_k)-\frac{1}{k}\frac{\mu'(s_k)}{\mu(s_k)}& = \frac{aG(t_k)\e^{-s_kaG(t_k)}}{1-\e^{-s_kaG(t_k)}}.
\end{split}
}
Using \eqref{for-substit} in the expressions for the second derivatives,
\eqn{
	\begin{split}
		\frac{\de^2\Psi_k}{\de t^2} & = \frac{b}{k}s_kg'(t_k) + \frac{as_kg(t_k)}{(1-\e^{-as_kG(t_k)})}\left(\frac{\alpha}{k}+\frac{b}{k}s_kg(t_k)\right)-\frac{g'(t_k)}{g(t_k)}\left(\frac{\alpha}{k}+\frac{b}{k}s_kg(t_k)\right)\\
		& = \frac{as_kg(t_k)}{(1-\e^{-as_kG(t_k)})}\left(\frac{\alpha}{k}+\frac{b}{k}s_kg(t_k)\right)-\frac{\alpha}{k}\frac{g'(t_k)}{g(t_k)},
	\end{split}
}
\eqn{
	\begin{split}
		\frac{\de^2\Psi_k}{\de s^2} & =-\frac{1}{k}\frac{\mu''(s_k)\mu(s_k)-\mu'(s_k)^2}{\mu(s_k)^2}+\frac{aG(t_k)}{1-\e^{-as_kG(t_k)}}\left(\frac{b}{k}G(t_k)-\frac{1}{k}\frac{\mu'(s_k)}{\mu(s_k)}\right)\\
		& = -\frac{1}{k}\frac{\mu''(s_k)}{\mu(s_k)}+\frac{1}{k}\left(\frac{\mu'(s_k)}{\mu(s_k)}\right)^2-\frac{1}{k}\frac{\mu'(s_k)}{\mu(s_k)}\frac{aG(t_k)}{1-\e^{-as_kG(t_k)}}+\frac{1}{k}\frac{abG(t_k)^2}{1-\e^{-as_kG(t_k)}}.
	\end{split}
}
In conclusion, the matrix $kH_k(t_k,s_k)$ is given by
\eqn{
\label{for-hessian-element}
	\begin{split}
		\left(kH_k(t_k,s_k)\right)_{1,1} & = 
				\frac{as_kg(t_k)}{(1-\e^{-as_kG(t_k)})}\left(\alpha+bs_kg(t_k)\right)-\alpha\frac{g'(t_k)}{g(t_k)};\\
		\left(kH_k(t_k,s_k)\right)_{2,2} & = 
		-\frac{\mu''(s_k)}{\mu(s_k)}+\left(\frac{\mu'(s_k)}{\mu(s_k)}\right)^2-\frac{\mu'(s_k)}{\mu(s_k)}\frac{aG(t_k)}{1-\e^{-as_kG(t_k)}}+\frac{abG(t_k)^2}{1-\e^{-as_kG(t_k)}};\\
		\left(kH_k(t_k,s_k)\right)_{2,1}& = 
				bg(t_k) +\left(1-\frac{1}{as_k}\right)\left(\frac{b}{k}G(t_k)-\frac{1}{k}\frac{\mu'(s_k)}{\mu(s_k)}\right)\left(\alpha^*+bs_kg(t_k)\right).
	\end{split}
}
We point out that, solving \eqref{for-psipart_s} in terms of $s$, it follows that
\eqn{
\label{for-solution_s}
	s = \frac{1}{aG(t)}\log\left(1+k\frac{aG(t)}{bG(t)-\frac{\mu'(s)}{\mu(s)}}\right).
}
As a consequence,
\eqn{
\label{for-sgt-asym}
	s_kg(t_k) = \alpha^* G(t_k)\frac{\mu(s_k)}{\mu'(s_k)}.
}
We use \eqref{for-solution_s}, \eqref{for-sgt-asym} and the expressions for the elements of the Hessian matrix given in \eqref{for-hessian-element} for the examples in Section \ref{sec-append-exfit}. We also use the formulas of this section in the proof of Propositions \ref{prop-expfit_general} and \ref{prop-subexpfit} given in Section \ref{sec-proof-3class}.

\subsection{Examples of aging functions}
\label{sec-append-exfit}
Here we give examples of limiting degree distributions. We consider the same three examples of aging functions we considered in Section \ref{sec-examples-age}, so
$$
	g(t) = \e^{-\lambda t},  \quad\quad  g(t) = (1+t)^{-\lambda}, \quad \quad\mbox{and}\quad \quad
	g(t) = \lambda_1\e^{-\lambda_2(\log (t+1)-\lambda_3)^2}.
$$
 We consider exponentially distributed fitness, so $\mu(s) = \theta\e^{-\theta s}$. In order to have a supercritical and Malthusian process, we can rewrite Condition \eqref{for-varpYcond} for exponentially distributed fitness as
$aG(\infty)<\theta<(a+b)G(\infty)$. 

In general, we  identify the minimum point $(t_k,s_k)$, then use \eqref{for-asympt_expfit-precise}. For all three examples, replacing $G(t)$ by $G(\infty)$ and using \eqref{for-solution_s}, it holds that
$$
	s_k \approx \frac{1}{aG(\infty)}\log\left(k\frac{aG(\infty)}{bG(\infty)+\theta}\right),
$$
and $s_kg(t_k)\approx \alpha^* G(\infty)\theta$. For the exponential aging function, using \eqref{for-sgt-asym}, it follows that
$\e^{-\lambda t_k}\approx \log k$.
In this case, since $g'(t)/g(t) = -\lambda$, the conclusion is that, ignoring the constants,
$$
	p_k = k^{-(1+\lambda\theta/a)}(\log k)^{\alpha^*/\lambda}(1+o(1)).
$$
For the inverse-power aging function $t_k\approx (\log k)^{1/\lambda}$, which implies (ignoring again the constants) that
$$
	p_k= k^{-(1+(\lambda-1)\theta/a)}\e^{-\alpha^*(\log k)^{1/\lambda}}(1+o(1)),
$$
where we recall that, for $g$ being integrable, $\lambda>1$. For the lognormal case, 
$$
	t_k\approx \e^{\left(\log k\right)^{1/2}},
$$
which means that
$$
	p_k = k^{-(1+\theta/aG(\infty))}\e^{-\alpha^* \e^{\left(\log k\right)^{1/2}}}(1+o(1)).
$$

\section{Proof of propositions \ref{prop-expfit_general} and  \ref{prop-subexpfit}}
\label{sec-proof-3class}
In the present section, we prove Propositions \ref{prop-expfit_general} and  \ref{prop-subexpfit}. These proofs are applications of Proposition \ref{prop-pkasym_fitage}, and mainly consist of computations. In the proof of the two propositions, we often refer to Appendix \ref{sec-app-hessian} for expressions regarding the Hessian matrix of $\Psi_k(t,s)$ as in \eqref{forPSi2def}.

\subsection{Proof of Proposition \ref{prop-expfit_general}} 
We start by proving the existence of the dynamical power-law. We already know that
\eqn{
\label{for-prop-fitexp-1}
	\pr\left(M_t=k\right) = \frac{\Gamma(k+b/a)}{\Gamma(b/a)\Gamma(k+1)}\int_0^
\infty \mu(s)\e^{-bsG(t)}\left(1-\e^{-asG(t)}\right)^kds.
}
We write
\eqn{
\label{for-jk-1}
	J(k) = \int_0^\infty \e^{-k\psi_k(s)}ds,
}
where
\eqn{
\label{for-psik-classes}
\psi_k(s) = \frac{bG(t)}{k}s-\frac{1}{k}\log(\mu(s))-\log\left(1-\e^{-asG(t)}
\right).
}
In order to give asymptotics on $J(k)$ as in \eqref{for-jk-1}, we can use a Laplace method similar to the one used in the proof of Lemma \ref{Lem-adaptLap-age}, but the analysis is simpler since in this case $\psi_k(s)$ is a function of only one variable. The idea is again to find a minimum point $s_k$ for $\psi_k(s)$, and to use Taylor expansion inside the integral, so
$$
	\psi_k(s) = \psi_k(s_k)+\frac{1}{2}{\psi''}_k(s_k)(s-s_k)^2+o((s-s_k)^2).
$$
We can ignore the contribution of the terms where $(s-s_k)^2\gg 1$, since $\e^{-k\psi_k(s)}\leq \e^{-bsG(t)}$, so that the error is at most exponentially small. As a consequence,
\eqn{
	J(k) = \sqrt{\frac{\pi}{{\psi''}_k(s_k)}}\e^{-k\psi_k(s_k)}(1+o(1)).
}
The minimum $s_k$ is a solution of
\eqn{
	\frac{d\psi_k(s)}{ds} = \frac{bG(t)}{k}-\frac{1}{k}\frac{\mu'(s)}{\mu(s)}-
\frac{aG(t)\e^{-saG(t)}}{1-\e^{-asG(t)}}=0.
}
In particular, $s_k$ satisfies the following equality, which is similar to \eqref{for-solution_s}:
\eqn{
	s_k = \frac{1}{aG(t)}\log\left(1+k\frac{aG(t)}{bG(t)-\mu'(s_k)/\mu(s_k)}
\right).
}
When $\mu(s) = Ch(s)\e^{-\theta s}$,
\eqn{
	\frac{\mu'(s)}{\mu(s)} = \frac{h'(s)\e^{-\theta s}-\theta h(s)\e^{-\theta s
}}{h(s)\e^{-\theta s}} = -\theta\left(1-\frac{h'(s)}{\theta h(s)}\right)\approx -\theta.
}
In particular, this implies
\eqn{
	s_k= \frac{1}{aG(t)}\log\left(1+k\frac{aG(t)}{bG(t)+\theta}\right)(1+o(1)).
}
Similarly to the element $\left(kH_k(t_k,s_k)\right)_{2,2}$ in \eqref{for-hessian-element}, 
\eqn{
	k\frac{d^2\psi_k(s_k)}{ds^2} = -\frac{\mu''(s_k)}{\mu(s_k)}+\left(\frac{\mu'(s_k)}{\mu(s_k)}\right)^2-\frac
{\mu'(s_k)}{\mu(s_k)}\frac{aG(t)}{1-\e^{-as_kG(t)}}+\frac{abG(t)^2}{1-\e^{-as_kG(t)}}.
}
For the general exponential class, the ratio
$$
	\frac{\mu''(s_k)}{\mu(s_k)} = \frac{h''(s)}{h(s)}-2\theta+\theta^2.
$$
As a consequence, $k\frac{d^2\psi_k(s)}{ds^2}$ converges to a positive constant, which means that $s_k$ is an actual minimum. Then $J(k) = c_1\e^{-k\psi_k(s_k)}(1+o(1))$. Using this in \eqref{for-prop-fitexp-1} and ignoring the constants, 
\eqn{
\begin{split}
	\pr\left(M_t=k\right) & = \frac{\Gamma(k+b/a)}{\Gamma(b/a)\Gamma(k+1)}\e^{-s_kbG(t)}\mu(s_k)(1+o(1))\\
		& =k^{-1}k^{-b/a} k^{b/a}h(s_k)k^{-\theta/aG(t)}(1+o(1))= h(s_k)k^{-(1+\theta/aG(t))}(1+o(1)),
\end{split}
}
which is a power-law distribution with exponent $\tau(t) = 1+\theta/aG(t)$, and minor corrections given by $h(s_k)$. This holds for every $t\geq0$. In particular, considering $G(\infty)$ instead of $G(t)$, with the same argument we can also prove  that the distribution of the total number of children obeys a power-law tail with exponent $\tau(\infty) = 1+\theta/aG(\infty)$.

We now prove the result on the limiting distribution $(p_k)_{k\in\N}$ of the $\CTBP$, for which we apply directly Proposition  \ref{prop-pkasym_fitage}, using the analysis on the Hessian matrix given in Section \ref{sec-app-hessian}. First of all, from \eqref{for-solution_s} it follows that
\eqn{
\label{for-proof-genexp1}
	s_k= \frac{1}{aG(t_k)}\log\left(1+k\frac{aG(t_k)}{bG(t_k)+\theta}\right)(1+o(1)),
}
and by \eqref{for-sgt-asym} 
\eqn{
\label{for-proof-genexp2}
	s_kg(t_k)\stackrel{k\rightarrow\infty}{\longrightarrow}\alpha\frac{G(\infty)}{\theta}.
}
For the Hessian matrix, using \eqref{for-proof-genexp1} and \eqref{for-proof-genexp1} in \eqref{for-hessian-element}, for any integrable aging function $g$ we have 
$$
		\left(kH_k(t_k,s_k)\right)_{2,2} = C_2+o(1)>0,\quad \quad \mbox{and}\quad \quad 
		\left(kH_k(t_k,s_k)\right)_{2,1}= o(1),
$$
	but $\left(kH_k(t_k,s_k)\right)_{1,1}$ behaves according to  $g'(t_k)/g(t_k)$. If this ratio is bounded, then $\left(kH_k(t_k,s_k)\right)_{1,1}= C_1+o(1)>0$, while $\left(kH_k(t_k,s_k)\right)_{1,1}\rightarrow\infty$ whenever $g'(t_k)/g(t_k)$ diverges. In both cases, 
 $(t_k,s_k)$ is a minimum. In particular, again ignoring the multiplicative constants and using \eqref{for-proof-genexp1} and \eqref{for-proof-genexp2} in the definition of $\Psi_k(t,s)$, the limiting degree distribution of the $\CTBP$ is asymptotic to
\eqn{
\label{for-asympt_expfit-precise}
	k^{-(1+\theta/(aG(t_k)))}h(s_k)\e^{-\alpha^* t_k}\left(\tilde{C}-\alpha^*\frac{g'(t_k)}{g(t_k)}\right)^{-1/2},
}
where the term $\left(\tilde{C}-\alpha^*\frac{g'(t_k)}{g(t_k)}\right)^{-1/2}$, which comes from the determinant of the Hessian matrix, behaves differently according to the aging function. With this, the proof of Proposition \ref{prop-expfit_general} is complete.
\qed

\subsection{Proof of Proposition \ref{prop-subexpfit}}
This proof is identical to the proof of Proposition \ref{prop-expfit_general}, but this time we consider a sub-exponential distribution. First, we start looking at the distribution of the birth process at a fixed time $t\geq0$. We define $\psi_k(s)$ and $J(k)$ as in \eqref{for-psik-classes} and \eqref{for-jk-1}. We use again \eqref{for-prop-fitexp-1}, so
$$
	s_k = \frac{1}{aG(t)}\log\left(1+k\frac{aG(t)}{bG(t)-\mu'(s_k)/\mu(s_k)}
\right).
$$
In this case, we have
\eqn{
\label{for-subexp-proof-1}
	\frac{\mu'(s)}{\mu(s)} = -\theta(1+\varepsilon)s^\varepsilon.
}
Then $s_k$ satisfies 
\eqn{
	s_k = \frac{1}{aG(t)}\log\left(1+k\frac{aG(t)}{bG(t)+\theta(1+\varepsilon)s_k^\varepsilon}\right).
}
By substitution, it is easy to check that $s_k$ is approximatively  $c_1\log k-c_2\log\log k = \log k(1-\frac{\log\log k}{\log k})$, for some positive constants $c_1$ and $c_2$. This means that as first order approximation, $s_k$ is still of logarithmic order. Then, 
\eqn{
\label{for-subexp-proof-2}
	\frac{\mu''(s)}{\mu(s)} = \theta^2(1+\varepsilon)^2s^{2\varepsilon}-\theta(1+\varepsilon)\varepsilon s^{\varepsilon-1}.
}
Using \eqref{for-subexp-proof-1} and \eqref{for-subexp-proof-2}, we can write
\eqn{
\begin{split}
	k\frac{d^2\psi_k(s)}{ds^2}  = &\theta(1+\varepsilon)\varepsilon s_k^{\varepsilon-1}+\theta(1+\varepsilon)s_k^\varepsilon\frac{aG(t)}{1-\e^{-as_kG(t)}}+\frac{abG(t)^2}{1-\e^{-as_kG(t)}}\\
	=& \theta(1+\varepsilon)\varepsilon s_k^{\varepsilon-1}+\theta(1+\varepsilon)\frac{s_k^{\varepsilon}}{k}
		(bG(t)+\theta(1+\varepsilon)s_k^\varepsilon)\\
		&+ \frac{bG(t)}{k}(bG(t)+\theta(1+\varepsilon)s_k^\varepsilon).
\end{split}
}
The dominant term is $c_1s_k^{\varepsilon}$, for some constant $c_1$. This means $k\frac{d^2\psi_k(s)}{ds^2}$ is of order $(\log k)^{\varepsilon}$. Now, 
\eqn{
\begin{split}
	J(k) &= \left(k\frac{d^2\psi_k(s)}{ds^2}\right)^{-1/2}C\e^{-bG(t)s_k-\theta s_k^{1+\varepsilon}+k\log(1-\e^{-aG(t)s_k})}(1+o(1)) \\
	& =  (\log k)^{-\varepsilon/2}k^{-b/a}\e^{-\theta (\log k)^{1+\varepsilon}}(1+o(1)).
\end{split}
}
As a consequence,
\eqn{
	\pr\left(M_t = k\right) = k^{-1}(\log k)^{-\varepsilon/2}\e^{-\theta (\log k)^{1+\varepsilon}}(1+o(1)),
}
which is not a power-law distribution. Again using similar arguments, we show that the limiting degree distribution of the $\CTBP$ does not show a power-law tail. In this case
$$
s_k = \frac{1}{aG(t_k)}\log\left(1+k\frac{aG(t_k)}{bG(t_k)+\theta(1+\varepsilon)s_k^\varepsilon}\right),
$$
and 
$$
	s_kg(t_k) = \frac{\alpha G(t_k)}{\theta(1+\varepsilon)s_k^\varepsilon}= \frac{\alpha G(t_k)}{\log^\varepsilon k}(1+o(1))\rightarrow0.
$$
The Hessian matrix elements are
\eqn{
\begin{split}
	(kH_k(t_k,s_k))_{1,1} &= \frac{a\alpha^2 G(t_k)}{s_k^\varepsilon}-a\alpha \frac{g'(t_k)}{g(t_k)}+o(1),\\
	(kH_k(t_k,s_k))_{2,2} &=\theta(1+\varepsilon)\varepsilon s_k^{\varepsilon-1}+\theta s_k^\varepsilon aG(\infty)+abG(\infty)^2+o(1),\\
	(kH_k(t_k,s_k))_{1,2} &= o(1).
\end{split}
}
This implies that
\eqn{
	\mathrm{det}\left(kH_k(t_k,s_k)\right) = C_1-s_k^\varepsilon\frac{g'(t_k)}{g(t_k)}+o(1)>0.
}
As a consequence, $(t_k,s_k)$ is an actual minimum. Then using the definition of $\Psi_k(t,s)$, 
\eqn{
	p_k=\e^{-\alpha^* t_k}k^{-1+b/a}k^{-b/a}\mu(s_k)\sim \e^{-\alpha^* t_k}k^{-1}\left(C_1-s_k^\varepsilon\frac{g'(t_k)}{g(t_k)}\right)\e^{-\frac{\theta}{(aG(\infty))^{1+\varepsilon}} (\log k)^{1+\varepsilon}}(1+o(1)).
}
This completes the proof.
\qed

\bigskip

\noindent
{\bfseries Acknowledgments.}
We are grateful to Nelly Litvak and Shankar Bhamidi for discussions on preferential attachment models and their applications, and Vincent Traag and Ludo Waltman from CWTS for discussions about citation networks as well as the use of Web of Science data. 
This work is supported in part by the Netherlands Organisation for Scientific Research (NWO) through the Gravitation {\sc Networks} grant 024.002.003. The work of RvdH is further supported by the Netherlands Organisation for Scientific Research (NWO) through VICI grant 639.033.806.

\printbibliography[title=References, heading = bibintoc]
%\bibliographystyle{plain}
%{\small
%\bibliography{biblio.bib}
\end{document}